\theoremstyle{definition}
\newtheorem{dfn}{Definition}[section]
\theoremstyle{plain}
\newtheorem{thm}{Theorem}
\newtheorem*{thm*}{Theorem}
\newtheorem{prop}[dfn]{Proposition}
\newtheorem*{prop*}{Proposition}
\newtheorem{lem}[dfn]{Lemma}
\newtheorem*{lem*}{Lemma}
\newtheorem{cor}[dfn]{Corollary}
\newtheorem{fact}{Fact}
\newtheorem*{fact*}{Fact}
\theoremstyle{remark}
\newtheorem{rem}[dfn]{Remark}
\newcommand{\C}{\mathbf{C}}
\newcommand{\R}{\mathbf{R}}
\newcommand{\Z}{\mathbf{Z}}
\newcommand{\gl}{\mathfrak{gl}}
\newcommand{\su}{\mathfrak{su}}
\renewcommand{\sp}{\mathfrak{sp}}
\newcommand{\heis}{\mathfrak{heis}}
\newcommand{\co}{\mathfrak{co}}
\renewcommand{\a}{\mathfrak{a}}
\newcommand{\g}{\mathfrak{g}}
\newcommand{\h}{\mathfrak{h}}    
\newcommand{\m}{\mathfrak{m}}
\renewcommand{\o}{\mathfrak{so}}
\newcommand{\p}{\mathfrak{p}}
\newcommand{\s}{\mathfrak{s}}
\renewcommand{\u}{\mathfrak{u}}
\newcommand{\z}{\mathfrak{z}}
\newcommand{\e}{\mathrm{e}}
\DeclareMathOperator{\Diff}{Diff}
\DeclareMathOperator{\Hom}{Hom}
\DeclareMathOperator{\Isom}{Isom}
\DeclareMathOperator{\Ad}{Ad}  
\DeclareMathOperator{\ad}{ad}
\DeclareMathOperator{\Stab}{Stab}
\DeclareMathOperator{\Ker}{Ker}
\DeclareMathOperator{\Span}{Span}
\DeclareMathOperator{\Aut}{Aut}
\DeclareMathOperator{\Kill}{Kill}
\DeclareMathOperator{\GL}{GL}
\DeclareMathOperator{\PSL}{PSL}
\DeclareMathOperator{\PO}{PO}
\DeclareMathOperator{\SO}{SO}
\DeclareMathOperator{\SU}{SU}
\DeclareMathOperator{\PSO}{PSO}
\DeclareMathOperator{\PSU}{PSU}
\DeclareMathOperator{\Aff}{Aff}
\DeclareMathOperator{\Sp}{Sp}
\DeclareMathOperator{\Spin}{Spin}
\DeclareMathOperator{\Rk}{Rk}
\DeclareMathOperator{\Conf}{Conf}
\DeclareMathOperator{\Tr}{Tr}
\DeclareMathOperator{\id}{id}
\DeclareMathOperator{\Mon}{Mon}
\renewcommand{\S}{\mathbf{S}}
\renewcommand{\H}{\mathbf{H}}  
\newcommand{\Ein}{\mathbf{Ein}} 
\newcommand{\X}{\mathbf{X}}
\renewcommand{\epsilon}{\varepsilon}
\renewcommand{\geq}{\geqslant}
\renewcommand{\leq}{\leqslant}
\renewcommand{\hat}{\widehat}  
\newcommand{\hx}{\hat{x}}
\renewcommand{\tilde}{\widetilde}
\renewcommand{\bar}{\overline}
\title{Semi-simple Lie groups acting conformally on compact Lorentz manifolds}
\author{Vincent Pecastaing}
\begin{document}

\maketitle

\begin{center}
\today
\end{center}

\begin{abstract}
We give a classification, up to local isomorphisms, of semi-simple Lie groups without compact factors that can act faithfully and conformally on a compact Lorentz manifold of dimension greater than or equal to $3$.
\end{abstract}

\tableofcontents

\section{Introduction}

In the middle of the 1980's, Zimmer proved a deep result on differentiable actions of Lie groups that preserve some geometric structure, called \textit{Zimmer's embedding theorem} (\cite{zimmer86}, Theorem A). Let $G$ be a real linear algebraic group and $H$ a non-compact simple Lie group. Assuming that $H$ acts on a manifold $M$ so as to preserve a $G$-structure and a finite volume, Zimmer's theorem gives strong algebraic constraints on $H$, a noticeable one being that $H$ can be locally embedded into $G$. In the same paper, Zimmer gave a striking corollary to his theorem in the special case where the $G$-structure is defined by a Lorentz metric.

\begin{thm*}[\cite{zimmer86}, Theorem B]
Up to local isomorphism, $\PSL(2,\R)$ is the only non-compact simple Lie group that can act faithfully and isometrically on a Lorentz manifold of finite volume.
\end{thm*}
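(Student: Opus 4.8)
The plan is to prove the two implications separately. Sharpness is the easy half: I would exhibit the required action by taking a cocompact lattice $\Gamma<\PSL(2,\R)$ together with the bi-invariant pseudo-Riemannian metric on $\PSL(2,\R)$ induced by the Killing form --- which has signature $(2,1)$, hence is Lorentzian --- and pushing it down to the compact quotient $M=\Gamma\backslash\PSL(2,\R)$. Right translations commute with the left $\Gamma$-action and are isometries, so they descend to a faithful isometric $\PSL(2,\R)$-action on $M$ preserving the finite Haar volume; faithfulness holds because a discrete normal subgroup of $\PSL(2,\R)$ is trivial. Everything then reduces to the converse: if a connected non-compact simple Lie group $H$ acts faithfully and isometrically on a Lorentz manifold $(M^n,h)$ of finite volume, I want $\h:=\Lie(H)\cong\sl(2,\R)$.

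The engine for the converse is Zimmer's embedding theorem, applied to the $\SO(1,n-1)$-structure defined by the metric and to the finite measure given by the metric volume. In the form that also controls the derivative cocycle it produces a nonzero --- hence, $\h$ being simple, injective --- homomorphism $\pi\colon\h\hookrightarrow\o(1,n-1)$ such that the standard representation $\R^{1,n-1}$ of $\o(1,n-1)$, restricted to $\pi(\h)$, contains the adjoint representation $\ad\colon\h\to\gl(\h)$ as a subrepresentation. I would then fix an $\h$-submodule $W\subseteq\R^{1,n-1}$ with $W\cong\ad_\h$ and play its internal structure against the ambient Lorentzian form $q$. First, $W$ is $q$-nondegenerate: the radical $W\cap W^\perp$ is an $\h$-submodule of $W\cong\h$, hence, by irreducibility of $\ad_\h$, zero or all of $W$, and "all of $W$" would make $W$ totally isotropic, forcing $\dim\h=\dim W\leq1$, which is absurd. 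Thus $\R^{1,n-1}=W\oplus W^\perp$ orthogonally, $q|_W$ is nondegenerate, and its positive index is $\leq1$ since positive indices add over orthogonal sums and $\R^{1,n-1}$ has positive index $1$.

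Next I would pin down $q|_W$ as a bilinear form on $\h$: it is $\ad$-invariant and nondegenerate. When $\h_\C$ is simple, $\ad_\h$ is absolutely irreducible and Schur's lemma forces $q|_W=c\,\Kill_\h$ with $c\neq0$, where $\Kill_\h$ has signature $(\dim\p,\dim\mathfrak{k})$ for a Cartan decomposition $\h=\mathfrak{k}\oplus\p$; when $\h$ is a complex simple Lie algebra regarded over $\R$, every invariant nondegenerate symmetric form is the real part of a $\C$-bilinear one and so has balanced signature $(\tfrac12\dim\h,\tfrac12\dim\h)$, with positive index $\tfrac12\dim\h\geq3>1$, contradicting the bound on $W$ --- so this case is excluded. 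With $q|_W=c\,\Kill_\h$, a negative-definite $q|_W$ would make $\h$ embed in the orthogonal algebra of a definite form, hence be compact, so its positive index equals $1$ exactly, i.e.\ $\dim\p=1$ (if $c>0$) or $\dim\mathfrak{k}=1$ (if $c<0$). The first is impossible: $\dim\p=1$ gives $[\p,\p]=0$, whence $[\h,\p]=[\mathfrak{k},\p]\subseteq\p$ and $\p$ is a nonzero proper ideal of the simple algebra $\h$. So $\dim\mathfrak{k}=1$; a maximal split torus of rank $\geq2$ would create at least two positive restricted roots and thus $\dim\mathfrak{k}\geq2$, so $H$ has real rank $1$, the restricted root system is $\{\pm\alpha\}$ with one-dimensional root spaces and trivial centralizer of $\a$, and $\dim\h=3$, giving $\h\cong\sl(2,\R)$.

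The only genuinely hard ingredient is Zimmer's Theorem A itself, which I am taking as a black box; within the present derivation the two points needing care are (i) that it is the finiteness of the invariant volume --- not compactness per se --- that forces $\ad_\h$ to sit inside $\R^{1,n-1}$, the statement being false without it (e.g.\ $\SO(1,n)$ acting on Minkowski space), and (ii) the description of the $\ad$-invariant symmetric forms on $\h$, where the real-type and complex-type cases behave differently and must be handled separately. Granting these, the nondegeneracy of $W$, the signature count, and the rank-one classification are all routine.
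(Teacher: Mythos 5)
Your proposal is correct. Note that the paper does not actually prove this statement: it quotes it as Zimmer's Theorem B and only supplies the sharpness example, namely $H/\Gamma$ with the bi-invariant Lorentzian Killing metric of $H=\PSL(2,\R)$ descended to the compact quotient by a uniform lattice --- which is exactly your construction. For the converse you reproduce the standard deduction of Theorem B from Zimmer's Theorem A (the adjoint representation embeds in $\R^{1,n-1}$, the restricted invariant form is a nonzero multiple of the Killing form in the absolutely simple case and has split signature in the complex case, and the constraint that the index-one part of the ambient form bounds the corresponding index of $\Kill_{\h}$ forces $\dim\mathfrak{k}=1$, hence real rank one and $\h\simeq\sl(2,\R)$); this is precisely the argument of \cite{zimmer86} itself, and every step checks out, including the exclusion of $\dim\p=1$ via the ideal $\p$ and the rank bound from counting positive restricted roots inside $\mathfrak{k}$. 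The only cosmetic caveat is that with this paper's sign convention $\R^{1,n-1}$ has \emph{negative} index $1$ rather than positive index $1$; the argument is symmetric in the two indices, so nothing changes.
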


A compact Lorentz manifold admitting an isometric and faithful action of $H:=\PSL(2,\R)$ can be easily built. Let $g_K$ be the Killing metric of $H$. It has Lorentz signature and is invariant under left and right translations of $H$ on itself. Choose $\Gamma < H$ any uniform lattice, and set $M = H / \Gamma$. The action of $\Gamma$ on $(H,g_K)$ by right translations being isometric, $g_K$ induces a Lorentz metric $g$ on $M$. Moreover, the action of $H$ on itself by left translations is also isometric for $g_K$ and centralizes the right translations. Therefore, $H$ acts on $M = H / \Gamma$ by isometries of $g$.

Note that this situation contrasts with the analogous Riemannian situation: the isometry group of a compact Riemannian manifold is always a compact Lie group (it follows from Ascoli's theorem). Thus, what we observe here is that although preserving a Lorentzian metric tensor is less restrictive than preserving a Riemannian one, it is still a \textit{rigid} condition that allows us to classify groups of isometries. In fact, Zimmer's result led Adams, Stuck and - independently - Zeghib to the full classification up to local isomorphism of the isometry group of a compact Lorentz manifold, see \cite{adams}, \cite{zeghib}. Thus, we have a good understanding of the groups of isometries of compact Lorentz manifolds and we would like to know if it is possible to classify their conformal groups.

Let $(M,g)$ be a pseudo-Riemannian manifold of signature $(p,q)$. A diffeomorphism $f \in \Diff(M)$ is said to be \textit{conformal} with respect to $g$ if there exists a positive map $\varphi \in \mathcal{C}^{\infty}(M)$ such that $f^* g = \varphi g$. Naturally, conformal diffeomorphisms of $g$ act on $M$ preserving the \textit{conformal class} $[g] = \{\e^{\sigma}g, \ \sigma \in \mathcal{C}^{\infty}(M)\}$, and they form a group denoted $\Conf(M,[g])$. We call \textit{conformal structure} the data of a conformal class $[g]$ of pseudo-Riemannian metrics on a smooth differentiable manifold $M$.

One of the most remarkable properties of conformal structures is their \textit{rigidity} in dimension greater than or equal to $3$. We mean ``rigidity'' in Gromov's sense of rigid geometric structures (in \cite{gromov}). Roughly, the main idea behind Gromov's definition is that, a point $x_0 \in M$ being given, a local conformal diffeomorphism defined near $x_0$ is determined by its $2$-jet at $x_0$. In particular, such transformations cannot act trivially on a non-empty open set without being globally trivial and another important corollary of this rigidity phenomenon is that for any conformal structure $(M^n,[g])$, with $n \geq 3$, the group $\Conf(M,[g])$ is a Lie transformation group of dimension at most $\frac{(n+1)(n+2)}{2}$.

For every signature $(p,q)$, with $p+q \geq 3$, there exists a compact pseudo-Riemannian manifold whose conformal group has the maximal dimension: it is the so-called \textit{Einstein's universe} $\Ein^{p,q}$. It is the quotient $(\S^p \times \S^q) / \Z_2$, where $\Z_2$ acts by the product of the antipodal maps, endowed with the metric induced by $-g_{\S^p} \oplus g_{\S^q}$, where $g_{\S^k}$ denotes the Riemannian metric of $\S^k$ with curvature $+1$. It can be shown that $\Ein^{p,q}$ admits $\PO(p+1,q+1)$ as conformal group (see Section \ref{sss:einstein_cartan}). In Lorentzian signature, the conformal group of Einstein's universe is $\PO(2,n)$, showing that there are much more examples of conformal actions of simple Lie groups on compact Lorentz manifolds than isometric ones.

It is in fact possible to build a simpler compact Lorentz manifold admitting conformal actions of (smaller) simple Lie groups, namely $\SO(1,k)$, $k \geq 2$. Let $\R^{1,k}$ be the $(k+1)$-dimensional Minkowski space and $\Gamma = <2 \id>$ be the (conformal) group generated by a non-trivial homothety. Naturally, $\Gamma$ acts properly discontinuously on $\R^{1,k} \setminus \{0\}$ and is centralized by $\SO(1,k)$. Therefore, $\SO(1,k)$ acts conformally on the quotient $(\R^{1,k} \setminus \{0\}) / \Gamma$, usually called a \textit{Hopf manifold}.

The main result of this paper extends Zimmer's result on Lie groups of Lorentzian isometries to Lie groups of Lorentzian conformal diffeomorphisms, giving a classification of these groups up to local isomorphism. Although metric tensors and conformal classes of metrics are both rigid geometric structures in dimension at least $3$, \textit{a priori} there does not exist any finite measure invariant by some conformal Lie transformation group, and it would not be reasonable to assume that our groups act preserving a finite measure. Zimmer's embedding theorem rely mainly on ergodic results and it applies to the case of isometric Lie group actions on compact manifolds (these actions preserve the Riemannian volume, which is finite by compactness). Since we do not have the existence of a finite measure invariant under conformal transformations, the approach will change significantly when studying conformal Lie group actions.

In fact, Zimmer also gave a result that controls the real rank of a semi-simple Lie group without compact factor that acts on a compact manifold by preserving a $G$-structure (but not a finite measure). Thus, this result applies to the case of compact pseudo-Riemannian conformal structures (which are $CO(p,q)$-structures). In \cite{badernevo}, Bader and Nevo studied the situation where the bound is achieved. We have summarized their results in the following

\begin{thm*}[\cite{zimmer87},\cite{badernevo}]
Let $(M,[g])$ be a compact conformal pseudo-Riemannian structure of signature $(p,q)$, with $p+q \geq 3$ and $p \leq q$. Let $H$ be a semi-simple Lie group without compact factor that acts conformally on $M$. Then, $\Rk_{\R}(H) \leq p + 1$. Moreover, if $H$ is simple and $\Rk_{\R}(H) = p+1$, then $H$ is locally isomorphic to some $\SO(p+1,k)$, with $k \leq q+1$.
\end{thm*}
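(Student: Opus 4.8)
The plan is to use the rigidity of conformal structures in dimension $\geq 3$ recalled above to place the problem in the framework of Cartan geometries: $(M,[g])$ is a $\mathrm{CO}(p,q)$-structure, equivalently a Cartan geometry $(\hat{M}\to M,\omega)$ modelled on $\Ein^{p,q}=G/P$ with $G=\PO(p+1,q+1)$ and $P$ the stabiliser of an isotropic line, and the conformal $H$-action lifts to $\hat{M}$ preserving $\omega$. Two rank computations will drive everything: the split part of $\co(p,q)=\R\,\Id\oplus\o(p,q)$ has dimension $1+\min(p,q)=p+1$, so $\Rk_{\R}(\mathrm{CO}(p,q))=p+1$, and likewise $\Rk_{\R}(G)=\Rk_{\R}(\o(p+1,q+1))=\min(p+1,q+1)=p+1$.

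For the rank inequality I would argue through the derivative cocycle. Since $M$ is compact, fix an admissible probability measure on $H$ and a stationary probability measure $\mu$ on $M$; reading the isotropy representation of the $H$-action in a measurable family of frames yields a cocycle $\alpha\colon H\times M\to\mathrm{CO}(p,q)$ over the $H$-action. Applying cocycle superrigidity for stationary measures — Zimmer's theorem, in the form developed and applied to this setting by Bader--Nevo — one finds that the $\mu$-algebraic hull $\mathbf{L}$ of $\alpha$, a subgroup of $\mathrm{CO}(p,q)$, carries a homomorphism from $H$ whose semisimple part has real rank at least $\Rk_{\R}(H)$; the hypothesis that $H$ is semisimple without compact factor is exactly what discards the degenerate alternatives, in which some factor of $H$ would act with a measurably trivial cocycle. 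Hence $\Rk_{\R}(H)\leq\Rk_{\R}(\mathbf{L})\leq\Rk_{\R}(\mathrm{CO}(p,q))=p+1$.

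Assume now $H$ is simple and $\Rk_{\R}(H)=p+1$. The first-order picture does not suffice on its own: a $(p+1)$-dimensional split torus of $\mathrm{CO}(p,q)$ necessarily involves the central homothety line, while a simple group has no nontrivial homomorphism to $\R_{>0}$, so one must use the Cartan geometry. I would therefore run the same scheme for the $P$-valued cocycle attached to the bundle $\hat{M}$ — equivalently invoke Zimmer's embedding theorem (or Gromov's for rigid geometric structures), or a Borel-density argument along a stationary measure on an associated flag bundle $\hat{M}\times_P(G/Q)$ — to produce a homomorphism $\rho\colon H\to G$ with $\Rk_{\R}(\rho(H))=p+1$ compatible with the parabolic reduction. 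Since $H$ is simple and only local isomorphism is claimed, $\Ker\rho$ is central, so at the level of Lie algebras we obtain an embedding $\iota\colon\h\hookrightarrow\g=\o(p+1,q+1)$ with $\Rk_{\R}\iota(\h)=\Rk_{\R}\g$; then the maximal $\R$-split torus $\a$ of $\g$ lies in $\iota(\h)$ and is also maximal split there.

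What is left — and what I expect to be the real obstacle — is the Lie-theoretic classification of the simple real subalgebras of $\o(p+1,q+1)$ of full real rank. With respect to $\a$ the restricted root system $\Sigma_{\g}$ is $B_{p+1}$ (if $q>p$) or $D_{p+1}$ (if $q=p$), and, $\a$ being maximal split in $\iota(\h)$ too, the restricted roots of $\iota(\h)$ form an irreducible (by simplicity of $\h$) full-rank subsystem $\Sigma_{\h}\subseteq\Sigma_{\g}$; inspecting full-rank subsystems of $B_{p+1}$ and $D_{p+1}$ shows $\Sigma_{\h}$ is again of type $B_{p+1}$ or $D_{p+1}$, which, with the classification of real forms by restricted roots and multiplicities (treating low-rank exceptional isomorphisms separately), rules out the $\su$-type, $\sp$-type and exceptional rank-$(p+1)$ forms and leaves $\h\cong\o(p+1,k)$. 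Comparing the short-root multiplicities on the two sides of $\iota$ gives $k-(p+1)\leq(q+1)-(p+1)$, i.e.\ $k\leq q+1$, and $\Rk_{\R}\o(p+1,k)=p+1$ forces $k\geq p+1$, so $H$ is locally isomorphic to $\SO(p+1,k)$ with $p+1\leq k\leq q+1$. The three places I would budget the most care are: keeping the bookkeeping of compact and amenable factors honest in the superrigidity step; making precise the ``compatible with the parabolic reduction'' clause that upgrades the bare homomorphism $\rho$ to an embedding of Cartan-geometric data; and the subsystem-and-multiplicity computation that eliminates the non-orthogonal full-rank forms.
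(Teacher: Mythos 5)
A preliminary remark: the paper does not prove this statement. It is quoted as background from \cite{zimmer87} and \cite{badernevo}, so your proposal can only be measured against those proofs and against the analogous arguments the paper does carry out in Sections \ref{ss:exclusion}--\ref{ss:orbits}. Your overall architecture (derivative cocycle into $\mathrm{CO}(p,q)$, algebraic hulls, restricted root systems and multiplicities) is the right one, and your Lie-theoretic endgame --- a full-real-rank simple subalgebra of $\o(p+1,q+1)$ has restricted root system $B_{p+1}$ or $D_{p+1}$, whence $\h \simeq \o(p+1,k)$, with the short-root multiplicity bound $k-(p+1) \leq q-p$ giving $k \leq q+1$ --- is essentially correct. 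But the two steps you yourself flag as delicate are genuine gaps, not bookkeeping.

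First, the rank bound. Cocycle superrigidity, whether for invariant or stationary measures, carries higher-rank hypotheses on the acting group (or on each factor), while the statement covers arbitrary semisimple $H$ without compact factors --- for instance a product of many rank-one factors, which must be excluded once the number of factors exceeds $p+1$. Your mechanism for ``discarding the degenerate alternatives'' (a factor with measurably trivial cocycle) is also not available without an invariant measure. Zimmer's actual argument sidesteps both issues: restrict to a maximal $\R$-split torus $A<H$, which is amenable and therefore \emph{does} preserve a finite measure on the compact $M$; the equivariance of the evaluation map $\hx \mapsto \iota_{\hx}$ (exactly the Zimmer-point mechanism of Section \ref{ss:embed}) then forces the algebraic hull of the derivative cocycle of $A$ to surject onto $\bar{\Ad_{\h}(A)}^{\text{Zar}}$, which contains a split torus of dimension $\Rk_{\R}(H)$ because the restricted roots of a semisimple algebra without compact factors span $\a^*$. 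Hence $\Rk_{\R}(H) \leq \Rk_{\R}(\mathrm{CO}(p,q)) = p+1$ with no superrigidity at all.

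Second, and more seriously, the classification step rests on an embedding $\h \hookrightarrow \o(p+1,q+1)$ that you obtain by invoking ``Zimmer's embedding theorem or a Borel-density argument on a flag bundle.'' Zimmer's embedding theorem requires an $H$-invariant finite measure, which a conformal action does not furnish --- this is precisely the obstruction the paper emphasizes in its introduction --- and Theorem \ref{thm:embed} only produces, at a Zimmer point, a surjection from a subgroup of $P$ onto $\Ad_{\h}(S)$: algebraic information in the opposite direction, not an embedding of $\h$ into $\g$. Producing a genuine local embedding $H \to \PO(p+1,q+1)$ in the maximal-rank case is essentially equivalent to the Frances--Zeghib/BFM geometric rigidity theorem (that $M$ is then a quotient of $\tilde{\Ein^{p,q}}$), a much harder and later result that cannot serve as an input here. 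Bader and Nevo avoid it entirely: at a Zimmer point $x$ for a suitable amenable discompact $S$, they exploit the conformal class of sub-signature-$(p,q)$ quadratic forms induced on $\h/\h_x$ and the fact that $\bar{\Ad}(S)$ acts conformally on it (the device of Proposition \ref{prop:zimmer_conformal}, which Section \ref{ss:orbits} adapts to rank one) to constrain the restricted root system and multiplicities of $\h$ directly, without ever embedding $\h$ into $\o(p+1,q+1)$. Your final computation would close the argument if you had the embedding, but the route you propose to it does not exist with the tools you cite.
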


Thus, the situation is well described when the group $H$ has \textit{large} real rank. The aim of the present article is to complete this description in Lorentz signature, by specifying the missing semi-simple groups: those with \textit{small} real rank (in fact equal to $1$). We do it by exploiting a more recent result, stated in \cite{embed}, that extends Zimmer's embedding theorem to another class of geometric structures: \textit{Cartan geometries} (the reader not familiar with these structures will find a brief introduction in Section \ref{ss:cartan_general}). Together with Bader-Nevo's and Zimmer's theorems, we obtain the following classification theorem, which is the main result of this paper. 

\begin{thm}
\label{thm:main}
Let $H$ be a semi-simple Lie group without compact factor. Assume that $H$ acts faithfully and conformally on a compact Lorentz manifold $(M^n,g)$, with $n \geq 3$. Then, its Lie algebra $\h$ is isomorphic to one of the following Lie algebras:
\begin{itemize}
\item $\o(1,k)$, with $2 \leq k \leq n$,
\item $\mathfrak{su}(1,k)$, with $2 \leq k \leq \frac{n}{2}$,
\item $\o(2,k)$, with $2 \leq k \leq n$,
\item $\o(1,k) \oplus \o(1,s)$, with $k,s \geq 2$ and $k+s \leq \max(n,4)$.
\end{itemize}
Conversely, for each Lie algebra $\h$ in this list, there exists a Lie group $H$ with Lie algebra $\h$ and an $n$-dimensional compact Lorentz manifold $(M,g)$ such that $H \hookrightarrow \Conf(M,[g])$.
\end{thm}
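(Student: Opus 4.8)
The plan is to reduce the theorem to a description of the semi-simple subalgebras without compact factors of $\o(2,n)=\Lie\PO(2,n)$ --- the conformal algebra of Einstein's universe $\Ein^{1,n-1}$, the flat model of $n$-dimensional Lorentzian conformal geometry --- and then to realise each surviving algebra geometrically. Passing to a local isomorphism I would write $\h=\h_1\oplus\cdots\oplus\h_r$ with each $\h_i$ non-compact simple, so that $\Rk_{\R}(\h)=\sum_i\Rk_{\R}(\h_i)\geq r$. Zimmer's rank inequality (applied with $p=1$, $q=n-1$) gives $\Rk_{\R}(\h)\leq 2$, hence $r\leq 2$. Since a Lorentzian conformal structure is a Cartan geometry modeled on $(\PO(2,n),P)$ with $P$ the stabiliser of an isotropic point of $\Ein^{1,n-1}$, the embedding theorem of \cite{embed} --- which, contrary to Zimmer's original statement, needs no finite invariant measure --- provides an embedding of Lie algebras $\iota\colon\h\hookrightarrow\o(2,n)$. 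The forward direction then becomes the algebraic question of which $\h$ admit such an embedding, and the converse amounts to exhibiting each candidate as a subgroup of $\PO(2,n)$ acting on $\Ein^{1,n-1}$.

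First I would treat the case where $\h$ is simple. If $\Rk_{\R}(\h)=2$, the Bader--Nevo theorem applies as stated and yields $\h\cong\o(2,k)$ with $k\leq q+1=n$, and rank two forces $k\geq 2$: this is the third item. If $\Rk_{\R}(\h)=1$, then $\h$ is one of the rank-one simple real Lie algebras, and the input is that the standard representation of $\o(2,n)$ restricts through $\iota$ to a faithful orthogonal representation of $\h$ on $\R^{2,n}$, i.e. one whose invariant form has negative index $2$. The key computation is that the minimal negative index of a faithful orthogonal representation equals $1$ for $\o(1,k)$ (realised by $\R^{1,k}$) and $2$ for $\su(1,k)$ (realised by $\C^{1,k}$ over $\R$), while it is at least $3$ for $\sp(1,k)$ (with $k\geq 2$) and for $\mathfrak{f}_4^{-20}$ --- a bound obtained by noting that this index is at least the number of positive-weight vectors of a maximal $\R$-split torus, which already exceeds $2$ in those cases. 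Hence $\sp(1,k)$ and $\mathfrak{f}_4^{-20}$ do not embed in any $\o(2,n)$ and are excluded; for the survivors, $\o(1,k)\subset\o(1,n)\subset\o(2,n)$ for every $k\leq n$ (with $k>n$ impossible by dimension), and for $\su(1,k)$ the representation is the realification of $\C^{1,k}$ up to a definite summand, so $\R^{2,2k}\subseteq\R^{2,n}$ and $2k\leq n$; together with $k\geq 2$ (the case $k=1$ being $\su(1,1)\cong\o(1,2)$, already listed) this gives the first two items.

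Next I would handle $\h=\h_1\oplus\h_2$ with two rank-one factors whose images under $\iota$ commute in $\o(2,n)$. By the previous step each $\h_i$ is $\o(1,k_i)$ or $\su(1,k_i)$. If some factor were $\su(1,k_i)$ with $k_i\geq 2$, then on $\R^{2,n}$ it is realised as the realification of $\C^{1,k_i}$ plus a definite summand, using up both negative directions with multiplicity one, so its centraliser in $\o(2,n)$ is $\u(1)\oplus\o(n-2k_i)$, which is compact --- contradicting that the other factor is non-compact simple and commutes with it. Hence $\h\cong\o(1,k)\oplus\o(1,s)$ with $k,s\geq 2$, and realising this forces a decomposition $\R^{2,n}=\R^{1,k}\oplus\R^{1,s}\oplus\R^{0,n-k-s}$ (any non-trivial $\o(1,k)$-subrepresentation already has negative index $\geq 1$, so the two standard blocks are distinct and exhaust the two negative directions), whence $k+s\leq n$. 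The apparently weaker constraint $k+s\leq\max(n,4)$ in the statement only records the coincidence $\o(1,2)\oplus\o(1,2)\cong\o(2,2)\hookrightarrow\o(2,3)$ available when $n=3$. I expect the main obstacle of the forward direction to be exactly this representation-theoretic heart: pinning down the minimal negative index of each rank-one simple algebra --- in particular the lower bounds that kill $\sp(1,k)$ and $\mathfrak{f}_4^{-20}$ and thereby make the list finite --- together with the accompanying centraliser and low-dimensional bookkeeping.

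For the converse, it suffices to recall that $\Ein^{1,n-1}$ is a compact Lorentz manifold of dimension $n$ with $\Conf(\Ein^{1,n-1},[g])=\PO(2,n)$, and to read off from the orthogonal decompositions $\R^{2,n}=\R^{2,k}\oplus\R^{0,n-k}$, $\R^{2,n}=\R^{1,0}\oplus\R^{1,n}$, $\R^{2,n}\supseteq\R^{2,2k}$ (from $\C^{1,k}$ over $\R$) and $\R^{2,n}=\R^{1,k}\oplus\R^{1,s}\oplus\R^{0,n-k-s}$ the respective inclusions $\o(2,k)\hookrightarrow\o(2,n)$ for $k\leq n$, $\o(1,k)\hookrightarrow\o(1,n)\hookrightarrow\o(2,n)$ for $k\leq n$, $\su(1,k)\hookrightarrow\o(2,n)$ for $2k\leq n$, and $\o(1,k)\oplus\o(1,s)\hookrightarrow\o(2,n)$ for $k+s\leq n$; the extremal case $n=3$, $k=s=2$ is covered by $\o(1,2)\oplus\o(1,2)\cong\o(2,2)\subset\o(2,3)$. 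Each of the corresponding connected subgroups of $\PO(2,n)$ acts faithfully and conformally on $\Ein^{1,n-1}$, which would complete the proof.
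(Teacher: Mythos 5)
Your reduction of the forward direction to the question ``which semi-simple $\h$ without compact factor embed into $\o(2,n)$'' rests on a false premise. The embedding theorem of Bader--Frances--Melnick does \emph{not} produce a Lie algebra embedding $\iota:\h\hookrightarrow\o(2,n)$, and it does require a finite invariant measure --- not for $H$, but for a discompact subgroup $S<H$ (obtained here from amenability of $S$ and compactness of $M$). What that theorem actually yields is the existence of Zimmer points $x$, i.e.\ points where $S.\iota(\hx)\subset\iota(\hx).P$ for the merely \emph{linear} injections $\iota_{\hx}:\h\to\g$ (explicitly not Lie algebra morphisms); the algebraic output is a surjective morphism from an algebraic subgroup of $P$ onto $\Ad_{\h}(S)$, not an embedding of $\h$ into $\g$. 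That the final list coincides with the list of semi-simple subalgebras of $\o(2,n)$ without compact factor is a \emph{consequence} of the theorem, not an available input. Your representation-theoretic core --- the minimal negative index of a faithful orthogonal representation of each rank-one algebra --- therefore has nothing to act on in the rank-one case, which is exactly where the substance of the result lies.

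Concretely, the missing work is done in the paper in two steps for which your sketch has no substitute. First, $\sp(1,k)$ and $\mathfrak{f}_4^{-20}$ are excluded by showing that they contain $\heis^{\H}(7)$, that every connected unipotent subgroup of $P$ is conjugate into a maximal one isomorphic to $\R\ltimes\heis^{\C}(2n-3)$, and that no subalgebra of the latter surjects onto $\heis^{\H}(7)$ --- this is the correct use of the surjection furnished by a Zimmer point. Second, the bounds $k\leq n$ for $\o(1,k)$ and $2k\leq n$ for $\su(1,k)$ come from analysing the $H$-orbit of a Zimmer point for $S$ with $\s=\a\oplus\h_{\alpha}$ (resp.\ $\a\oplus\h_{\alpha}\oplus\h_{2\alpha}$): Lemma \ref{lem:zimmer_stabilizer} and Proposition \ref{prop:zimmer_conformal} pin down the isotropy algebra $\h_x$ and the conformal type of the orbit (a round sphere $\S^{k-1}$, a $k$-dimensional orbit, a fixed point, or a $2k$-dimensional Lorentzian orbit), and the dimension bound follows from the orbit sitting inside $M$. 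Your handling of the rank-two cases and of the converse is essentially the paper's --- for non-simple $\h$ of rank $2$ the embedding into $\o(2,n)$ \emph{is} legitimately available, from the maximal-rank rigidity theorem of \cite{embed}, and your centraliser/index bookkeeping matches the paper's final lemma --- but without repairing the rank-one argument the proof does not go through.
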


In fact, the Lie algebras in the above list are precisely all the Lie subalgebras of $\o(2,n)$ which are semi-simple without compact factor. Thus, the previous result says that any semi-simple Lie group without compact factor that acts conformally on a compact Lorentz manifold can be locally embedded into the conformal group of the Lorentzian Einstein's universe of same dimension.

Theorem \ref{thm:main} shows that the conformal group of any compact Lorentz manifold $(M^n,g)$ is ``close to'' the one of $\Ein^{1,n-1}$. Naturally, one can ask a dual question: to what extent is \textit{the geometry} of $(M,[g])$ related to the one of $\Ein^{1,n-1}$ ? Similarly to what we have exposed before, this problem is well understood when the manifold admits conformal actions of semi-simple Lie groups with \textit{maximal} real rank. In \cite{franceszeghib}, Frances and Zeghib showed that if $(M,[g])$ is compact, with signature $(p,q)$ and admits a faithful conformal action of $\SO(p+1,k)$, with $k \geq p+1$, then it is conformally diffeomorphic to some quotient $\Gamma \backslash \tilde{\Ein^{p,q}}$ where $\Gamma$ is a discrete subgroup of $\Conf(\tilde{\Ein^{p,q}})$. More generally, the authors of \cite{embed} gave a bound on the real-rank of semi-simple Lie groups that act by automorphisms of a compact parabolic geometry and they proved a similar geometric result when this bound is achieved.

Thus, in Lorentz signature, the remaining question is to describe the geometry of compact manifolds admitting conformal actions of groups with real rank $1$. We leave this geometric aspect for further investigations.

\subsection*{Conventions and notations}

In this paper, ``manifold'' will mean a smooth differentiable manifold and all the objects considered will be assumed smooth. As usual, we will use the fraktur font to denote the Lie algebra of a Lie group. If $M$ is a manifold, we note $\mathfrak{X}(M)$ the Lie algebra of vector fields defined on $M$. A faithful and differentiable action of a Lie group $H$ on a manifold $M$ gives rise to a Lie algebra embedding $\h \hookrightarrow \mathfrak{X}(M)$ (to $X \in \h$ corresponds the infinitesimal generator of $\{\e^{-tX}\}_{t \in \R}$, seen as a flow on $M$). We will often identify $\h$ with this Lie algebra of vector fields.

\section{Background}

One of the great difficulties when working with groups of conformal transformations is that \textit{a priori}, these groups act without preserving any linear connection on the manifold. Thus, we cannot use standard tools of Riemannian
geometry (which are associated to the Levi-Civita connection of the metric) and we have to consider \textit{higher order} geometric objects to observe the rigidity of conformal structures.

A nice way to do so is to interpret a conformal structure $(M^n, [g])$, $n \geq 3$, as the data of a \textit{normalized Cartan geometry} modeled on the Einstein universe with same signature. The first purpose of this section is to recall briefly what the latter sentence means. The central tool of the proof of Theorem \ref{thm:main} is an embedding result, formulated in the framework of (general) Cartan geometries, given by Bader, Frances and Melnick in \cite{embed}, Theorem 1. We will also re-state this result.

\subsection{Cartan geometries and the equivalence problem for conformal structures}
\label{ss:cartan_general}

The notion of Cartan geometry, introduced by É. Cartan, can be understood as an idea of curved versions of homogeneous spaces, in the same sense than Riemannian manifolds are curved versions of the Euclidian space. Thus, a Cartan geometry is always defined with respect to a homogeneous space $\X=G/H$, called the \textit{model space} of the geometry. We start giving the general definitions and properties we need concerning these these geometric structures. The reader can find a deeper introduction in \cite{sharpe} or \cite{cap}.

\subsubsection{General definitions}

Let $G$ be a Lie group, $P < G$ a closed subgroup and $\X = G/P$ the corresponding homogeneous space.

\begin{dfn}
\label{dfn:cartan}
Let $M$ be a manifold. A \textit{Cartan geometry} $(M,\mathcal{C})$ modeled on $\X$ is the data  of $(M,\hat{M},\omega)$, where $\hat{M} \rightarrow M$ is a $P$-principal fiber bundle and $\omega$ is a $\g$-valued $1$-form on $\hat{M}$, called the \textit{Cartan connection}, satisfying the following conditions:
\begin{enumerate}
\item For all $\hx \in \hat{M}$, $\omega_{\hx} : T_{\hx}\hat{M} \rightarrow \g$ is a linear isomorphism ;
\item For all $X \in \p$, $\omega(X^*) = X$, where $X^*$ denotes the fundamental vector field associated to the right action of $\exp(tX)$ ;
\item For all $p \in P$, $(R_p)^*\omega = \Ad(p^{-1})\omega$.
\end{enumerate}
\end{dfn}

Note that the definition implies $\dim M = \dim \X$. If $(M_1,\mathcal{C}_1)$ and $(M_2,\mathcal{C}_2)$ are two Cartan geometries modelled on $\X$, a \textit{morphism} between them is naturally a morphism of $P$-principal bundles between the Cartan bundles $F : \hat{M_1} \rightarrow \hat{M_2}$, which respects the Cartan connections, \textit{i.e.} $F^* \omega_2 = \omega_1$. Thus every morphism $F : \hat{M_1} \rightarrow \hat{M_2}$ is over a map $f : M_1 \rightarrow M_2$.

When the model space is \textit{effective} (\textit{i.e.} when the natural action $P \curvearrowright \X$ by left translations is faithful), the base map $f$ completely determines $F$: there is at most one bundle morphism $F : \hat{M_1} \rightarrow \hat{M_2}$ that covers $f$ and such that $F^* \omega_2 = \omega_1$ (see \cite{cap}, Prop. 1.5.3).

Thus, when the model space is effective, we can consider a morphism of Cartan geometries as a map between the bases of the Cartan bundles.

\begin{dfn}
Let $\X$ be an effective homogeneous space, and $(M_1,\mathcal{C}_1)$, $(M_2,\mathcal{C}_2)$ be two Cartan geometries modelled on $\X$. A local diffeomorphism $f : M_1 \rightarrow M_2$ is said to be a morphism of Cartan geometries if it is covered by a bundle morphism between the Cartan bundles which respects the Cartan connections. Such a bundle morphism is necessarily unique. We call it \textit{the lift of} $f$, and note it $\hat{f}$.
\end{dfn}

In what follows, we will only consider effective model spaces. We note $\Aut(M,\mathcal{C})$ the group of isomorphisms of $(M,\mathcal{C})$ into itself.

\begin{dfn}[Infinitesimal automorphisms]
A vector field $X \in \mathfrak{X}(M)$ is said to be a Killing vector field of $(M,\mathcal{C})$ if its local flow $\phi_X^t$ is composed with local automorphisms of $(M,\mathcal{C})$.
\end{dfn}

This last definition is in fact equivalent to the existence of a unique vector field $\hat{X} \in \mathfrak{X}(\hat{M})$ such that 
\begin{itemize}
\item $\hat{X}$ commutes to the right $P$-action on $\hat{M}$ ;
\item $\pi_* \hat{X} = X$ ;
\item $\mathcal{L}_{\hat{X}} \omega = 0$.
\end{itemize}
We also call $\hat{X}$ the lift of the Killing vector field $X$.

\subsubsection{Basic properties}
\label{sss:basic_properties}

We collect here some classical or elementary facts concerning Cartan geometries and their automorphisms. We consider $(M,\mathcal{C})$ a Cartan geometry with model space $\X = G/P$, and we note $\pi : \hat{M} \rightarrow M$ its Cartan bundle and $\omega \in \Omega^1(\hat{M}, \g)$ its Cartan connection.

\vspace*{0.2cm}

\textbf{Automorphisms group.} The group $\Aut(M,\mathcal{C})$ admits a unique differential structure which makes it a Lie transformation group. The Lie algebra of this group is naturally identified with the Lie algebra of complete Killing vector fields of $(M,\mathcal{C})$. By definition, the Lie group $\Aut(M,\mathcal{C})$ acts on $\hat{M}$ (via the lifts $\hat{f}$), and this late action is free and proper. Consequently if $X \in \Kill(M,\mathcal{C})$, its lift $\hat{X}$ never vanishes on $\hat{M}$ unless $X=0$.

\vspace*{0.2cm}

\textbf{Generalizations from Riemannian geometry.} One of the most interesting aspects of Cartan geometries is that it is possible to define familiar notions of differential geometry such as curvature, torsion, tensors or covariant derivative, generalizing the ones of a standard linear connection (see \cite{sharpe}, §5.3). These objects have properties similar to Riemannian geometry: for instance, when the curvature vanishes identically, the Cartan geometry yields the structure of a $(G,\X)$-manifold on $M$.

\subsubsection{The equivalence problem}

Sometimes, the data of a Cartan geometry (with some model space) is equivalent to the one of a ``classic'' geometric structure. For instance, when $\X = \Aff(\R^n) / \GL(\R^n)$ is the standard affine space, the data of a torsion-free Cartan geometry $(M,\mathcal{C})$ modeled on $\X$ is equivalent to the one of a torsion-free affine connection $\nabla$ on $M$. Another remarkable example is that when $\X = \Isom(\R^{p,q})/ O(p,q)$ is the standard pseudo-Euclidian space, the data of a torsion-free Cartan geometry $(M,\mathcal{C})$ modeled on $\X$ is equivalent to the one of a pseudo-Riemannian metric $g$ of signature $(p,q)$ on $M$ (this last assertion is nothing more than the existence and uniqueness of the Levi-Civita connection defined by a metric).

The problem that consists in finding a homogeneous space $\X$ such that there is an equivalence of categories between Cartan geometries with model space $\X$ and manifolds endowed with some classic geometric structure is called the \textit{equivalence problem}. This problem has been solved for most of the familiar examples of rigid geometric structures (pseudo-Riemannian metrics, conformal structures in dimension at least $3$, non-degenerate CR-structures..). Thus, in lots of geometric contexts, the theory of Cartan geometries yields formalism and materials to analyse problems.

This point of view is particularly relevant when working with conformal pseudo-Riemannian structures. The appropriate homogeneous space $\X$ used to interpret a conformal structure in terms of a Cartan geometry is the \textit{Einstein universe} with same signature.

\subsubsection{The model space of conformal geometry and Cartan's theorem}
\label{sss:einstein_cartan}

Let $p,q \geq 0$ be two integers such that $n:=p+q \geq 3$. Consider $\R^{p+1,q+1}$ the vector space $\R^{n+2}$ endowed with the non-degenerate quadratic form $q(x) = -x_1^2 - \cdots -x_{p+1}^2 + x_{p+2}^2 + \cdots + x_{n+2}^2$ with signature $(p+1,q+1)$. Note $\pi : \R^{n+2} \setminus \{0\} \rightarrow \R P^{n+1}$ the natural projection.

\begin{dfn}
We define (as a manifold) the \textit{Einstein universe} of signature $(p,q)$ as being the quadric in $\R P^{n+1}$ defined by $\Ein^{p,q} = \pi(\{q=0\} \setminus \{0\})$.
\end{dfn}

Thus, $\Ein^{p,q}$ is an $n$-dimensional smooth compact projective variety, on which the group $\PO(p+1,q+1)$ acts transitively. Consequently, as a homogeneous space, $\Ein^{p,q}$ can be identified with a quotient $\PO(p+1,q+1) / P$, where $P < \PO(p+1,q+1)$ is a parabolic Lie subgroup.

The quadric $\Ein^{p,q}$ naturally inherits a conformal class $[g_{p,q}]$ of non-degenerate metrics of signature $(p,q)$ from the ambient quadratic form of $\R^{p+1,q+1}$, and this conformal class is invariant under the action of $\PO(p+1,q+1)$. In fact, we have $\Conf(\Ein^{p,q}) = \PO(p+1,q+1)$.

When the signature is Riemannian, we have a conformal identification $\Ein^{0,n} \simeq \S^n$ with the standard Riemannian sphere with same dimension. We can now state the following central result, due to É. Cartan in the Riemannian case (see \cite{cartan23}).

\begin{thm*}[Equivalence problem for conformal structures, É. Cartan]
Let $p,q \geq 0$ be two integers such that $n:=p+q \geq 3$. There is an equivalence of categories
\begin{equation*}
\left \{
\begin{array}{c}
\text{Cartan geometry } (M^n,\mathcal{C}) \\
\text{ with model space } \Ein^{p,q} \\
\text{which are normalized}
\end{array}
\right \}
\leftrightarrow
\left \{
\begin{array}{c}
\text{Conformal structures } (M^n,[g]) \\
\text{where } [g] \text{ is a conformal class} \\
\text{of metrics of signature } $(p,q)$
\end{array}
\right \}.
\end{equation*}
\end{thm*}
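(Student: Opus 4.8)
The plan is the classical prolongation argument for parabolic geometries; I outline it. Write $G=\PO(p+1,q+1)$, fix an isotropic line $\ell_0\subset\R^{p+1,q+1}$, and let $P$ be its stabilizer, so that $\Ein^{p,q}=G/P$. The Lie algebra $\g=\o(p+1,q+1)$ carries the associated $|1|$-grading $\g=\g_{-1}\oplus\g_0\oplus\g_1$, where $\g_{-1}\cong\R^n$ (with its quadratic form of signature $(p,q)$), $\g_0\cong\co(p,q)=\o(p,q)\oplus\R$ is the conformal linear algebra, and $\g_1\cong(\R^n)^*$ is abelian; correspondingly $P=G_0\ltimes P_+$ with $G_0\cong\mathrm{CO}(p,q)$ and $P_+=\exp(\g_1)$ a normal abelian subgroup, and $\p=\g_0\oplus\g_1$. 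I will describe a functor sending a normalized Cartan geometry modeled on $\Ein^{p,q}$ to the conformal class it induces, and a functor sending a conformal structure to ``its'' normalized Cartan geometry, and then sketch why they are mutually inverse.

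\textbf{From a Cartan geometry to a conformal structure (the easy direction).}
Given $(M,\hat M,\omega)$ modeled on $\Ein^{p,q}$, set $\hat M_0:=\hat M/P_+$. This is a principal $G_0$-bundle over $M$, and the composition $T\hat M\xrightarrow{\ \omega\ }\g\to\g/\p\cong\g_{-1}$ is $P_+$-invariant, so it descends to a strictly horizontal, $G_0$-equivariant $\g_{-1}$-valued $1$-form $\theta$ on $\hat M_0$. Thus $(\hat M_0,\theta)$ is a reduction of the frame bundle of $M$ to $\mathrm{CO}(p,q)$, which for $n\geq 3$ is exactly the datum of a conformal class $[g]$ of signature $(p,q)$ on $M$ (a $\mathrm{CO}(p,q)$-orbit of coframes is a conformal class of inner products). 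A morphism of Cartan geometries sends $P_+$-orbits to $P_+$-orbits and pulls $\omega$ back to $\omega$, hence induces an isomorphism of the quotient bundles intertwining the $\theta$'s, so it covers a conformal local diffeomorphism. Well-definedness and functoriality on this side are routine.

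\textbf{From a conformal structure to a normalized Cartan geometry (the hard direction).}
Starting from the $\mathrm{CO}(p,q)$-reduction $\hat M_0\to M$ attached to $[g]$, enlarge it to the $P$-bundle $\hat M:=\hat M_0\times_{G_0}P$ and build $\omega$ by successive normalization. Any two equivariant Cartan connections on $\hat M$ differ by a horizontal $\p$-valued $1$-form, and the effect on the curvature $\Omega$ of modifying a first approximation by a $\g_1$-valued horizontal form is governed, in lowest degree, by the Kostant differential $\partial:\g_1\otimes\g_{-1}^*\to\Lambda^2\g_{-1}^*\otimes\g$. The decisive input is algebraic: in the relevant bidegree the Lie algebra cohomology $H^1(\g_{-1},\g)$ vanishes for $n\geq 3$ (Kostant's theorem), so that imposing the normalization condition $\partial^*\Omega=0$ — concretely, the vanishing of a Ricci-type trace of the Cartan curvature, leaving only its Weyl part — singles out a unique equivariant connection. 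Iterating through the (here very short) filtration of the $|1|$-grading produces the normalized Cartan connection, uniqueness being built into each step. This is the step I expect to be the main obstacle: it is where the homological algebra of the pair $(\g,\p)$ and the hypothesis $n\geq 3$ genuinely enter, since for $n=2$ the analogous cohomology is different and a conformal class no longer pins down a Cartan connection.

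\textbf{Mutual inverseness.}
Applying the first functor to the geometry just constructed returns $[g]$, directly from the definition of $\hat M_0$ and $\theta$. Conversely, if $(M,\hat M,\omega)$ is normalized with induced conformal class $[g]$, then the $P$-bundle $\hat M_0\times_{G_0}P$ and the normalized Cartan connection attached to $[g]$ must coincide with $(\hat M,\omega)$, again by the uniqueness in the normalization procedure; hence the two functors are mutually inverse on objects. For morphisms, a conformal local diffeomorphism $f:M_1\to M_2$ induces an isomorphism of the associated $\mathrm{CO}(p,q)$-reductions, hence of the $P$-bundles, and since the pull-back of a normalized Cartan connection is again normalized, uniqueness forces this isomorphism to intertwine the Cartan connections; so $f$ is covered by a unique morphism of Cartan geometries, which is precisely the lift $\hat f$. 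This gives the asserted equivalence of categories.
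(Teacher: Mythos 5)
This theorem is not proved in the paper: the author explicitly defers the hard direction to the literature (\cite{cap}, chapter 1.6, and \cite{koba}, chapter IV.4) and only works out the easy direction in Section \ref{sss:cartan_easy}. Your sketch of the easy direction agrees with the paper's, just packaged differently: where you quotient $\hat M$ by $P_+$ to get a $\mathrm{CO}(p,q)$-reduction of the frame bundle with its soldering form, the paper keeps the full bundle and uses the family of identifications $\varphi_{\hx}:T_xM\to\g/\p$ together with the $\bar{\Ad}(P)$-invariant conformal class on $\g/\p$; these are the same construction (your descent works precisely because $\bar{\Ad}(P_+)$ acts trivially on $\g/\p$, which is the same equivariance the paper exploits). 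For the hard direction you correctly identify the standard prolongation route of the cited references: the $|1|$-grading of $\o(p+1,q+1)$, the extension $\hat M_0\times_{G_0}P$, the normalization condition $\partial^*\Omega=0$, and the fact that uniqueness hinges on the vanishing of $H^1(\g_{-1},\g)$ in positive homogeneity (which is where $n\geq 3$ enters). That said, your argument for this direction remains a sketch: the Kostant cohomology computation and the actual construction of the normal connection (choice of Weyl connection, determination of the $\g_1$-component by the Ricci-type trace condition) are invoked rather than carried out, and the phrase ``$H^1(\g_{-1},\g)$ vanishes'' should be restricted to positive homogeneity, since the cohomology in homogeneity $\leq 0$ is nonzero and is exactly what makes a conformal class a first-order structure. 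Since the paper itself treats this as a black box, your outline is an adequate and accurate account of where the real work lies, but it should not be mistaken for a self-contained proof.
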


The normalization assumption in this last theorem is a technical condition on the Cartan connection $\omega$ of a Cartan geometry modeled on $\Ein^{p,q}$ which is made in order to ensure the uniqueness of the Cartan geometry associated to a conformal structure.

This result is known to be technical and not immediate. The reader can find an exposition of the construction of the normalized Cartan geometry associated to $(M,[g])$ in \cite{cap} (chapter 1.6) or \cite{koba} (chapter IV.4). Nevertheless, it is more simple to see how is built the conformal class associated to a Cartan geometry modeled on $\Ein^{p,q}$. Since it will have a significant importance in the proof of Theorem \ref{thm:main}, we explain it briefly.

\subsubsection{The easy part of Cartan's theorem}
\label{sss:cartan_easy}

Generally, if $(M,\mathcal{C})$ is any Cartan geometry modeled on a homogeneous space $G/P$, with Cartan bundle $\pi : \hat{M} \rightarrow M$, the Cartan connection $\omega$ identifies the tangent bundle $TM \simeq \hat{M} \times_P \g / \p$, where $P$ acts on $\g / \p$ via its adjoint representation on $\g$ (see \cite{sharpe}, p.188). Precisely, we have a family of identifications $\varphi_{\hx} : T_xM \rightarrow \g / \p$, parametrized by $\hx \in \pi^{-1}(x)$, which satisfy the equivariancy relation $\varphi_{\hx.p} = \bar{\Ad}(p^{-1}) \varphi_{\hx}$. These maps are built in the following way: if $v \in T_xM$ is a tangent vector and $\hx$ is in the fiber over $x$, choose any $\hat{v} \in T_{\hx}\hat{M}$ such that $\pi_* \hat{v} = v$. Then, set $\varphi_x(v) : = \omega_{\hx}(\hat{v}) + \p \in \g / \p$. This definition does not depend on the choice of $\hat{v}$ because of the second axiom of Cartan geometries, and the equivariancy comes from the third axiom.

In the situation where $G/P = \Ein^{p,q}$,  with $G = \PO(p+1,q+1)$ and $P=\Stab_G(x_0)$ for some $x_0 \in \Ein^{p,q}$, then $\g / \p$ is identified with the tangent space $T_{x_0}\Ein^{p,q}$ and the adjoint representation $\bar{\Ad} : P \rightarrow \GL(\g / \p)$ corresponds to the isotropy representation $\rho_{x_0} : P \rightarrow \GL(T_{x_0} \Ein^{p,q})$. Therefore, the conformal class of signature $(p,q)$ of $\Ein^{p,q}$ yields a conformal class $[Q] = \{\lambda Q, \ \lambda \in \R_+^*\}$ of  quadratic forms of signature $(p,q)$ on $\g / \p$ which is invariant under the adjoint action $\bar{\Ad}(P) < \GL(\g / \p)$. Thus, if $(M,\mathcal{C})$ is modeled on $\Ein^{p,q}$, we can provide $M$ a conformal conformal class of metrics of signature $(p,q)$ by setting $\forall x \in M, \ [g_x] := \varphi_{\hx}^* [Q]$ for any $\hx$ in the fiber over $x$ (this is well defined by $\bar{\Ad}(P)$-invariance of $[Q]$).

\subsection{A more recent embedding theorem}
\label{ss:embed}

The main result of the present work rely on a formulation for Cartan geometries of Zimmer's embedding theorem. We recall here the content of this result.

Let $(M,\mathcal{C})$ be a Cartan geometry, modeled on $\X = G/P$. We note $\pi : \hat{M} \rightarrow M$ its Cartan bundle and $\omega \in \Omega^1(\hat{M},\g)$ its Cartan connection.

The context is the following: we are given a faithful Lie group action $H \times M \rightarrow M$ on a manifold $M$, and we assume that this action preserves the Cartan geometry $(M,\mathcal{C})$. This global $H$-action gives rise to a Lie algebra embedding $\h \hookrightarrow \Kill(M,\mathcal{C})$ and from now on, $\h$ will denote the corresponding Lie subalgebra of Killing vector fields of $(M,\mathcal{C})$. Recall that if $X$ is a Killing vector field of $(M,\mathcal{C})$, we denote by $\hat{X} \in \mathfrak{X}(\hat{M})$ its lift to $\hat{M}$.

For all $\hx \in \hat{M}$, we now define a linear morphism $\iota_{\hx} : \h \rightarrow \g$ by setting 
\begin{equation*}
\forall X \in \h, \ \iota_{\hx}(X) = \omega_{\hx}(\hat{X}_{\hx}).
\end{equation*}
According to section \ref{sss:basic_properties}, $\iota_{\hx}$ is injective (but it is \textit{a priori} not a Lie algebra embedding). Thus, we are given a differentiable map
\begin{align}
\label{align:iota}
\iota : & \ \hat{M} \rightarrow \Mon(\h,\g) \\
        & \ \ \hx \ \mapsto \iota_{\hx} , \nonumber
\end{align}
where $\Mon(\h,\g)$ denotes the variety of injective linear morphisms $\h \rightarrow \g$. Moreover,  the product group $H \times P$ naturally acts (on the left) on $\hat{M}$ via $(h,p).\hx = h.\hx.p^{-1}$ (recall that the $H$-action commutes to the $P$-action). A simple computation shows that
\begin{equation*}
\iota((h,p).\hx) = \Ad(p) \circ \iota(\hx) \circ \Ad(h^{-1}),
\end{equation*}
\textit{i.e.} the map $\iota : \hat{M} \rightarrow \Hom(\h,\g)$ is $(H \times P)$-equivariant when $H \times P$ acts on $\Hom(\h,\g)$ via $(h,p).\alpha = \Ad(p) \circ \alpha \circ \Ad(h^{-1})$.

\subsubsection{Zimmer points of a measurable action}

\begin{dfn}[Zimmer points]
Let $S$ be a Lie subgroup of $H$. A point $x \in M$ is said to be a \textit{Zimmer point} for $S$ if for some (equivalently for all) $\hx \in \pi^{-1}(x)$, we have $S.\iota(\hx) \subset \iota(\hx).P$.
\end{dfn}

The following proposition shows that the existence of Zimmer point enable us to relate the Lie group $H$ with the model space $G/P$.

\begin{prop}[\cite{embed}]
\label{prop:morphisme_surjectif}
Let $x \in M$ be a Zimmer point for $S$. For all $\hx \in \pi^{-1}(x)$, there exists an algebraic subgroup $P^{\hx} < P$ and a surjective algebraic morphism $\rho_{\hx} : P^{\hx} \rightarrow \Ad_{\h}(S)$.
\end{prop}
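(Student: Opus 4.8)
The plan is to unravel the Zimmer-point condition into an explicit algebraic correspondence between subgroups of $P$ and the image of $S$ under the adjoint representation. Fix $\hx \in \pi^{-1}(x)$ and write $\alpha := \iota(\hx) \in \Mon(\h,\g)$. The defining property of a Zimmer point says that for every $s \in S$ we have $\Ad(s)$ acting on the left of $\alpha$ lands back in $\alpha \cdot P$; more precisely, using the $(H\times P)$-equivariance $\iota((h,p).\hx) = \Ad(p)\circ\iota(\hx)\circ\Ad(h^{-1})$, the inclusion $S.\alpha \subset \alpha.P$ reads: for each $s \in S$ there exists $p \in P$ with $\alpha \circ \Ad(s^{-1}) = \Ad(p)\circ \alpha$, i.e. $\Ad(p)$ restricted to $\alpha(\h)$ equals $\alpha \circ \Ad(s^{-1}) \circ \alpha^{-1}$ (which makes sense since $\alpha$ is injective). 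The first step is therefore to define
\begin{equation*}
P^{\hx} := \{ p \in P : \Ad(p)(\alpha(\h)) = \alpha(\h) \text{ and } \alpha^{-1}\circ\Ad(p)\circ\alpha \in \Ad_{\h}(S) \},
\end{equation*}
where I view $\Ad_{\h}(S)$ as the (algebraic) subgroup of $\GL(\h)$ generated by the adjoint action of $S$ on $\h$. This is manifestly a subgroup of $P$, and the Zimmer-point hypothesis guarantees that every element of $\Ad_{\h}(S)$ of the form $\Ad(s)|_\h$ is hit; since $\Ad_\h(S)$ is generated by such elements, the resulting map is onto.

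The second step is to package the conjugation map as a morphism $\rho_{\hx} : P^{\hx} \to \Ad_\h(S)$, $\rho_{\hx}(p) = \alpha^{-1}\circ \Ad(p)\circ\alpha$. One checks directly that this is a group homomorphism (it is the restriction to $\alpha(\h)$ of the adjoint representation of $G$, transported through the fixed linear isomorphism $\alpha : \h \to \alpha(\h)$), and that it is surjective onto $\Ad_\h(S)$ by the discussion above. The remaining point is algebraicity: one must check that $P^{\hx}$ is a (Zariski-)closed subgroup of $P$ and that $\rho_{\hx}$ is a morphism of algebraic groups. Here I would use that $P$ is an algebraic subgroup of $G = \PO(p+1,q+1)$, that the stabilizer in $P$ of the subspace $\alpha(\h) \subset \g$ under the adjoint action is algebraic (it is defined by polynomial equations in the matrix entries), and that the further condition $\alpha^{-1}\circ\Ad(p)\circ\alpha \in \Ad_\h(S)$ is algebraic because $\Ad_\h(S)$ is the Zariski closure of the group generated by $\{\Ad(s)|_\h\}$ — one should replace $S$ by its Zariski closure or, more carefully, observe that the condition cuts out the preimage of an algebraic subgroup under an algebraic map, hence is algebraic. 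The map $\rho_{\hx}$ is then visibly given by matrix entries that are regular functions of $p$, so it is an algebraic morphism.

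I expect the main obstacle to be exactly this algebraicity bookkeeping: the group $S$ is an abstract Lie subgroup of $H$, so $\Ad_\h(S)$ need not be Zariski-closed a priori, and one has to be a little careful about whether ``$\Ad_\h(S)$'' in the statement means the image subgroup or its Zariski closure — and correspondingly whether $\rho_{\hx}$ surjects onto the abstract image or only onto the closure. The cleanest fix is to note that the ambient adjoint action $\Ad : G \to \GL(\g)$ is an algebraic representation and $P$ is algebraic, so the image $\Ad(P)|_{\alpha(\h)}$ is an algebraic group; intersecting with (the closure of) $\Ad_\h(S)$ and pulling back keeps everything algebraic, and surjectivity onto $\Ad_\h(S)$ itself then follows from the Zimmer-point condition since every generator $\Ad(s)|_\h$ of $\Ad_\h(S)$ lies in the image of $\rho_{\hx}$. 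The dependence on $\hx$ (versus $\hx.p'$) is harmless: changing the point in the fiber conjugates $\alpha$ by $\Ad(p')^{-1}$, which conjugates $P^{\hx}$ inside $P$ and composes $\rho_{\hx}$ with an inner automorphism, so the existence statement is unaffected. Everything else — that $\alpha$ is injective, that the $H$- and $P$-actions on $\hat M$ commute, that $\iota$ is $(H\times P)$-equivariant — has already been established in the preceding subsection and is used off the shelf.
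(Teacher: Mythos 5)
Your proof is correct and follows essentially the same route as the paper: take $P^{\hx}$ to be the subgroup of $P$ stabilizing the linear subspace $\iota_{\hx}(\h)\subset\g$ whose conjugated action $\iota_{\hx}^{-1}\circ\Ad(p)\circ\iota_{\hx}$ lands in $\Ad_{\h}(S)$, and read surjectivity directly off the Zimmer-point identity $\iota(\hx)\circ\Ad(s^{-1})=\Ad(p)\circ\iota(\hx)$. Your extra care about the algebraicity of $\Ad_{\h}(S)$ is reasonable but not an issue in the paper's applications (there $S$ is chosen so that $\Ad_{\h}(S)$ is unipotent or one passes to the Zariski closure), and note that $\Ad_{\h}(S)$ is already the image of a group homomorphism, so surjectivity onto it needs no ``generated by'' argument.
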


\begin{proof}
By definition, $\forall s \in S$, $\exists p \in P$ such that $\iota(\hx) \circ \Ad(s^{-1}) = \Ad(p) \circ \iota(\hx)$. Set $\h^{\hx} := \iota_{\hx}(\h) \subset \g$ (it is only a linear subspace). Let $P' < P$ be the stabilizer $P' = \{p \in P \ | \ \Ad(p) \text{ stabilizes } \h^{\hx} \}$. Define $\rho_{\hx} : P' \rightarrow \GL(\h)$ by setting $\rho_{\hx}(p) = \iota_{\hx}^{-1} \circ \Ad_{\h}(p) \circ \iota_{\hx}$. Set $P^{\hx} = \rho_{\hx}^{-1}(\Ad_{\h}(S))$. The fact that $x$ is a Zimmer point simply says that $\rho_{\hx} : P^{\hx} \rightarrow \Ad_{\h}(S)$ is onto. 
\end{proof}

As we see in the previous proof, the morphism $\rho_{\hx}$ is obtained by restricting the adjoint action of a certain subgroup $P^{\hx} < P$ to the subspace $\iota_{\hx}(\h) \subset \g$. We can summarize this by saying that the adjoint action of $S$ on $\h$ is ``contained'' in the adjoint action of $P$ on $\h$:
\begin{equation*}
\Ad_{\g}(P^{\hx}) =
\begin{pmatrix}
\Ad_{\h}(S) & * \\
0 & *
\end{pmatrix},
\end{equation*}
and thus that we can find algebraic properties of the couple $(H,S)$ inside the couple $(G,P)$. As we will see during the proof of Theorem \ref{thm:embed}, it is also possible to derive geometric informations from the morphism $\rho_{\hx}$ (see in particular Section \ref{sss:general_zimmer_conformal}).

The embedding theorem of Bader, Frances and Melnick gives sufficient conditions on $S$ that ensure the existence of Zimmer points. Before stating it, we give the following definition.

\begin{dfn}
A Lie subgroup $S < H$ is said to be \textit{discompact} if the Zariski closure $\bar{\Ad_{\h}(S)}^{\text{Zar}} \subset \GL(\h)$ admits no proper algebraic normal cocompact subgroup.
\end{dfn}

Examples of discompact subgroups are $S < H$ such that the Zariski closure of $\Ad_{\h}(S)$ is an hyperbolic torus (think to the adjoint action of a Cartan subgroup of a semi-simple Lie group) or a unipotent subgroup (think to the adjoint action of some positive root spaces).

We can now state the theorem.

\begin{thm}[\cite{embed}]
\label{thm:embed}
Let $(M,\mathcal{C})$ be a Cartan geometry with model space $\X = G/P$. Let $H$ be a Lie group acting on $M$ by automorphisms, and $S < H$ a Lie subgroup. Assume that $S$ is discompact and acts on $M$ preserving a finite measure $\mu$. Then, $\mu$-almost every point $x \in M$ is a Zimmer point for $S$.
\end{thm}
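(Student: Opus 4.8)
The plan is to convert the statement into one about finite invariant measures for algebraic actions, using the equivariant map $\iota$ as a bridge between the dynamics on $M$ and the algebra of the pair $(\h,\g)$. Throughout I use that $G$ (hence $P$) is a real algebraic group, as is the case in the conformal setting where $G=\PO(p+1,q+1)$. First I would push the picture down from $\hat M$ to $M$: composing $\iota$ with the quotient map $q\colon\Mon(\h,\g)\to P\backslash\Mon(\h,\g)$ --- quotient by the left action $p\cdot\alpha=\Ad(p)\circ\alpha$ --- the equivariance relations for $\iota$ show that $q\circ\iota$ is constant on the $P$-fibers of $\hat M$, hence descends to a map $\bar\phi\colon M\to P\backslash\Mon(\h,\g)$. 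This $\bar\phi$ intertwines the $H$-action on $M$ with the action of $H$ on $P\backslash\Mon(\h,\g)$ given by precomposition with $\Ad_\h$; the latter action factors through the algebraic group $\Ad_\h(H)\subset\GL(\h)$ and extends to an algebraic action of $\GL(\h)$. Unwinding the definition, one checks that $x\in M$ is a Zimmer point for $S$ if and only if $\bar\phi(x)$ is a fixed point of the $\Ad_\h(S)$-action on $P\backslash\Mon(\h,\g)$. So the theorem amounts to: for $\mu$-almost every $x$, $\bar\phi(x)\in\mathrm{Fix}(\Ad_\h(S))$.

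Next I would transport the measure. Since $\bar\phi$ is $S$-equivariant and $\mu$ is $S$-invariant, $\nu:=\bar\phi_*\mu$ is a finite $\Ad_\h(S)$-invariant measure on $P\backslash\Mon(\h,\g)$. By a standard argument (the stabilizer of a probability measure under an algebraic action is Zariski-closed, the key point being algebraicity of the action together with Noetherianity), the subgroup $\{g\in\GL(\h):g_*\nu=\nu\}$ is algebraic; since it contains $\Ad_\h(S)$, it contains $A:=\overline{\Ad_\h(S)}^{\text{Zar}}$, so $\nu$ is $A$-invariant --- and $A$ is discompact by hypothesis.

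The heart of the matter is then a measure-rigidity lemma: \emph{if a discompact real algebraic group $A$ acts algebraically on a real algebraic variety $W$ preserving a finite measure $\nu$, then $\nu$ is supported on $\mathrm{Fix}_W(A)$.} I would prove this by (i) passing to the ergodic decomposition, reducing to ergodic $\nu$; (ii) using that orbits of an algebraic action of an algebraic group are locally closed, so that by Noetherian induction on $W$ (with Rosenlicht's rational-quotient theorem) an ergodic invariant measure concentrates on a single orbit $A\cdot w\cong A/B$ with $B=\Stab_A(w)$ Zariski-closed; and (iii) invoking the structure of real algebraic homogeneous spaces carrying a finite invariant measure, which forces $B$ to contain a cocompact normal algebraic subgroup of $A$ --- the underlying mechanism being that a nontrivial split torus or unipotent subgroup has no nontrivial finite invariant measure on any of its algebraic quotients. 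Discompactness of $A$ then forces this subgroup, hence $B$, to equal $A$, so the orbit is a single point; running this over almost every ergodic component gives $\nu(W\setminus\mathrm{Fix}_W(A))=0$. Applying the lemma with $W=P\backslash\Mon(\h,\g)$ and $\nu=\bar\phi_*\mu$ yields $\bar\phi(x)\in\mathrm{Fix}(\Ad_\h(S))$ for $\mu$-almost every $x$, which by the first paragraph is exactly the assertion that $\mu$-almost every $x$ is a Zimmer point for $S$.

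The main obstacle --- which is the technical core of the embedding theorem of Bader, Frances and Melnick --- is to run the above rigorously when $P$ does not act on $\Mon(\h,\g)\subset\Hom(\h,\g)$ with closed orbits, so that $P\backslash\Mon(\h,\g)$ is not literally an algebraic variety and the locally-closed-orbits input is not directly available there. The standard remedy is to work on the honest variety $\Mon(\h,\g)$ with the algebraic $(\Ad(P)\times A)$-action, carrying $\mu$ up along a Borel section $\sigma\colon M\to\hat M$ and its structural cocycle $\beta\colon S\times M\to P$ (defined by $s\cdot\sigma(x)=\sigma(sx)\cdot\beta(s,x)^{-1}$), and then running Zimmer's algebraic-hull reduction for this cocycle; alternatively one linearizes $P\backslash\Mon(\h,\g)$ equivariantly inside a projective space via a Chevalley representation and argues there. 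Either way, the real-algebraic subtleties (real points of homogeneous spaces, unimodularity of $A/B$) and the precise homogeneous-space lemma in step (iii), where discompactness is used in an essential way, are where the real work lies; granting these inputs, the argument above gives the theorem.
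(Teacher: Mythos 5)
The paper does not actually prove this statement: it is quoted from \cite{embed} and used as a black box, so there is no internal proof to compare against. Your reconstruction follows essentially the same route as the cited source of Bader--Frances--Melnick --- the equivariant map $\iota$, pushing the finite $S$-invariant measure down to the quotient of $\Mon(\h,\g)$ by the $P$-action, and a generalized Borel density theorem asserting that a finite measure invariant under an algebraic action of a discompact group is supported on fixed points --- and you correctly isolate the one genuine technical wrinkle (the $P$-quotient is not a variety, so one must pass through Rosenlicht quotients, a cocycle reduction, or a tame Borel-space argument) together with the standard remedies, so the proposal is a faithful and sound sketch of the known proof.
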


Typically, if $S$ is discompact and amenable, and if the manifold is compact, the hypothesis of Theorem \ref{thm:embed} are satisfied and there must exist a Zimmer point for $S$. As we shall see, the existence of Zimmer points is a very rich information that will be sufficient to prove Theorem \ref{thm:main}.

\section{Proof of the main theorem}

Theorem \ref{thm:main} classifies semi-simple Lie groups without compact factor that can act by conformal diffeomorphisms on a compact Lorentz manifold. The result of Zimmer cited in the introduction reduces our work to semi-simple Lie group of real rank at most $2$. When the group has $\R$-rank $2$, the situation is already well described and an algebraic lemma will be enough to obtain the conclusion of Theorem \ref{thm:main}. We postpone it to the last Section \ref{ss:rank2}. Therefore, the substantial contribution of our work is the description of non-compact simple Lie groups of $\R$-rank $1$ acting conformally on compact Lorentz manifolds.

The non-compact simple Lie algebras of real rank $1$ are: $\o(1,k)$, $k \geq 2$, $\su(1,k)$, $k \geq 2$, $\mathfrak{sp}(1,k)$, $k \geq 2$ and $\mathfrak{f}_4^{-20}$. Section \ref{ss:preliminaries} gives elementary properties of them that we will use. We then prove in Section \ref{ss:exclusion} that no group locally isomorphic to $\Sp(1,k)$ or $F_4^{-20}$ appears in our classification. In Section \ref{ss:orbits}, we give geometric properties of the orbits of certain Zimmer points of a conformal action of a Lie group locally isomorphic to $\SO(1,k)$ or $\SU(1,k)$; and we derive from these properties that $k$ is bounded by the dimension (or half the dimension) of the manifold.

\subsection{Preliminary description of non-compact simple real Lie algebras of real rank $1$}
\label{ss:preliminaries}
\subsubsection{Definitions of orthogonal, special unitary and symplectic rank $1$ simple Lie algebras}
\label{sss:preliminaries_orthogonal}

Let $n \geq 3$. We note
\begin{equation*}
J_{1,n-1} = 
\begin{pmatrix}  
0 & & 1 \\
 & I_{n-2} & \\
1& & 0
\end{pmatrix}.
\end{equation*}
We denote by $\H$ the non-commutative field of quaternions. If $M$ is a matrix with complex or quaternionic coefficients, we note $M^* = \! ~^{t} \bar{M}$.

We define for $n \geq 3$
\begin{align*}
\o(1,n-1) & = \{M \in \gl_n(\R) \ | \ M^* J_{1,n-1} + J_{1,n-1} M = 0\} \\
\su(1,n-1) & = \{ M \in \gl_n (\C) \ | \ \Tr M = 0 \text{ and } M^* J_{1,n-1} + J_{1,n-1}M = 0 \} \\
\sp(1,n-1) & = \{ M \in \gl_n(\H) \ | \ M^* J_{1,n-1} + J_{1,n-1}M = 0 \}.
\end{align*}
All of them are real non-compact simple Lie algebras of real rank $1$ admitting the following root-space decompositions:
\begin{align*}
\o(1,n-1) & = \h_{-\alpha} \oplus \a \oplus \o(n-2) \oplus \h_{\alpha}, \text{ with } \dim \h_{\alpha} = n-2 \\
\su(1,n-1) & = \h_{-2\alpha} \oplus \h_{-\alpha} \oplus \a \oplus \u(n-2) \oplus \h_{\alpha} \oplus \h_{2\alpha}, \text{ with } \dim \h_{\alpha} = 2n-4 \text { and } \dim \h_{2\alpha} = 1\\
\sp(1,n-1) & = \h_{-2\alpha} \oplus \h_{-\alpha} \oplus \a \oplus \o(3) \oplus \sp(n-2) \oplus \h_{\alpha} \oplus \h_{2\alpha}, \text{ with } \dim \h_{\alpha} = 4n -8 \\ & \text { and } \dim \h_{2\alpha} = 3,
\end{align*}
where the factor $\a$ always denotes a natural Cartan subspace (for the Cartan involution $\theta : X \mapsto -X^*$), $\alpha$ a simple restricted root and $\h_{\pm \alpha}$ or $\h_{\pm 2\alpha}$ the corresponding root spaces. We can simply observe these decompositions using the matrix representations. For instance, a matrix $M \in \gl(n,\R)$ belongs to $\o(1,n-1)$ if and only if it has the form
\begin{equation*}
M = 
\begin{pmatrix}
 a & u & 0 \\
 - ^{t} v & N & - ^{t} u \\ 
 0 & v & -a
\end{pmatrix}
,
\text{ with } a \in \R, \ u,v \in \R^{n-2} \text{ and } N \in \o(n-2).
\end{equation*}
The Cartan subspace $\a$ corresponds to the ($1$-dimensional) space of diagonal $\R$-split matrices and the associated root spaces are the following subspaces of upper (resp. lower) triangular matrices
\begin{equation*}
\h_{\alpha} = 
\left \lbrace 
\begin{pmatrix}
0 & u & 0 \\
  & 0 & - ^{t} u \\ 
  &   & 0
\end{pmatrix}
, \ u \in \R^{n-2}
\right \rbrace
\text{ and }
\h_{-\alpha} =
\left \lbrace 
\begin{pmatrix}
0 &   &  \\
- ^{t} v  & 0 &  \\ 
0  & v & 0
\end{pmatrix}
, \ v \in \R^{n-2}
\right \rbrace.
\end{equation*}
\subsubsection{Classical and quaternionic Heisenberg Lie algebras.}
\label{sss:heisenberg}

The Lie algebras $\su(1,n-1)$ and $\sp(1,n-1)$ contain nilpotent Lie subalgebras, with degree of nilpotence $2$, that will play an important role in the proof of Theorem \ref{thm:main}. We define them intrinsically, and then identify them in $\su(1,n-1)$ and $\sp(1,n-1)$ using the root-space decompositions.

\begin{dfn}
Let $k \geq 1$. We note $\omega_{\C}$ the standard symplectic form on $\C^k$ and $\omega_{\H}$ the $\H$-valued alternate $2$-form on $\H^k$ given by $\omega_{\H}(u,v) = u ^{t}\!\bar{v} - v ^{t}\!\bar{u}$ for $u,v \in \H^k$.

We define the \textit{classical Heisenberg Lie algebra} $\heis^{\C}(2k+1)$ of dimension $2k +1$ as the real vector space $\C^k \oplus \R.Z$, the element $Z$ being outside $\C^k$, endowed with the Lie algebra structure whose center is exactly $\R.Z$ and such that if $z,z' \in \C^k$, $[z,z'] = \omega_{\C}(z,z')Z$.

We define the \textit{quaternionic Heisenberg Lie algebra} $\heis^{\H}(4k+3)$ of dimension $4k+3$ as the real vector space $\H^k \oplus \Span_{\R}(Z_i,Z_j,Z_k)$, the elements $Z_i,Z_j,Z_k$ being linearly independent and outside $\H^k$, endowed with the Lie algebra structure whose center is exactly $\Span_{\R}(Z_i,Z_j,Z_k)$ and such that if $q,q' \in \H^k$, if we write $\omega_{\H}(q,q') = a.i + b.j +c.k$, then we have $[q,q'] = aZ_i + bZ_j + cZ_k$.
\end{dfn}

The commutator ideals of these Lie algebras coincide with their respective centers. Thus, they are nilpotent Lie algebra with degree of nilpotence $2$.

\begin{lem}
\label{lem:heis_su_sp}
Let $n \geq 3$. The Lie algebras $\su(1,n-1)$ and $\sp(1,n-1)$ contain $\heis^{\C}(2n-3)$ and $\heis^{\H}(4n-5)$ respectively as subalgebras.
\end{lem}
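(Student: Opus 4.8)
The plan is to realize the desired Heisenberg algebras as the nilpotent part $\mathfrak{n} := \h_\alpha \oplus \h_{2\alpha}$ of the Iwasawa decomposition, using the root-space decompositions of $\su(1,n-1)$ and $\sp(1,n-1)$ recalled above. First I would record the dimension count: for $\su(1,n-1)$ one has $\dim\mathfrak{n} = (2n-4)+1 = 2n-3$ with $\dim\h_\alpha = 2(n-2)$, and for $\sp(1,n-1)$ one has $\dim\mathfrak{n} = (4n-8)+3 = 4n-5$ with $\dim\h_\alpha = 4(n-2)$; these match the dimensions of $\heis^{\C}(2n-3)$ and $\heis^{\H}(4n-5)$, with $k = n-2$ in both cases. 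It then remains to produce linear isomorphisms $\h_\alpha \cong \C^{n-2}$, $\h_{2\alpha} \cong \R$ (resp. $\h_\alpha \cong \H^{n-2}$, $\h_{2\alpha} \cong \mathrm{Im}\,\H$) carrying the Lie bracket of $\mathfrak{n}$ to the bracket defining $\heis^{\C}$ (resp. $\heis^{\H}$) from Section \ref{sss:heisenberg}.

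The bracket structure of $\mathfrak{n}$ is dictated by the roots: since $[\h_{a\alpha},\h_{b\alpha}] \subseteq \h_{(a+b)\alpha}$ and $\h_{m\alpha} = 0$ for $m \geq 3$, we get $[\mathfrak{n},\mathfrak{n}] \subseteq \h_{2\alpha}$ and $[\h_\alpha,\h_{2\alpha}] = [\h_{2\alpha},\h_{2\alpha}] = 0$, so $\mathfrak{n}$ is a two-step nilpotent subalgebra with $\h_{2\alpha}$ central, and its whole Lie algebra structure is encoded by the alternating map $\h_\alpha \times \h_\alpha \to \h_{2\alpha}$, $(X,Y) \mapsto [X,Y]$. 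The heart of the proof is an explicit computation of this map with matrix representatives, strictly analogous to the description of $\o(1,n-1)$ given above: the element $X_u \in \h_\alpha$ with parameter $u$ is represented by a matrix whose only nonzero blocks are a row $u \in \C^{n-2}$ (resp. $\H^{n-2}$) in position $(1,2)$ and $-u^*$ in position $(2,3)$, while an element of $\h_{2\alpha}$ is a matrix whose only nonzero entry is a purely imaginary scalar in position $(1,3)$. Carrying out the $3\times 3$-block product, the $(1,3)$-entry of $[X_u,X_v]$ comes out as $-(uv^* - vu^*) = -\sum_i(u_i\bar v_i - v_i\bar u_i)$, which under the evident identifications is exactly $-\omega_{\C}(u,v)$ (resp. $-\omega_{\H}(u,v)$), up to a harmless scalar factor absorbed by rescaling the central generators. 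Since $\omega_{\C}$ and $\omega_{\H}$ are non-degenerate (the quaternionic case needing a one-line verification, $\H$ being non-commutative), this simultaneously gives $[\mathfrak{n},\mathfrak{n}] = \h_{2\alpha}$, shows that $\h_{2\alpha}$ is exactly the center of $\mathfrak{n}$, and yields the sought isomorphism with $\heis^{\C}(2n-3)$ (resp. $\heis^{\H}(4n-5)$) by comparison with the intrinsic definitions.

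I expect the only genuine obstacle to be bookkeeping: writing down the block form of a general element of $\su(1,n-1)$ and $\sp(1,n-1)$ from the relation $M^*J_{1,n-1} + J_{1,n-1}M = 0$ (together with $\Tr M = 0$ in the unitary case), pinning down which blocks constitute $\a$, $\h_\alpha$ and $\h_{2\alpha}$, and then performing the matrix product while keeping the order of quaternionic factors straight. Conceptually the statement is classical — the Iwasawa nilpotent subalgebra of a rank-one simple algebra of type $\su$ or $\sp$ is a complex, resp. quaternionic, Heisenberg algebra, as one sees on the boundary of complex, resp. quaternionic, hyperbolic space — so the work lies entirely in making the identification explicit enough to match the two models.
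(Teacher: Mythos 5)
Your proposal is correct and follows essentially the same route as the paper: exhibiting $\h_\alpha\oplus\h_{2\alpha}$ in the explicit $3\times 3$ block representation, computing the $(1,3)$-entry of $[X_u,X_v]$ as $\pm(u\,{}^t\bar v - v\,{}^t\bar u)$, and matching it with $\omega_{\C}$ (resp. $\omega_{\H}$) under the evident identifications. The only additions you make beyond the paper's write-up are the dimension count and the remark that non-degeneracy of the form pins down the center, both of which are harmless and correct.
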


\begin{proof}
In $\su(1,n-1)$, the root spaces of positive roots correspond to the subspaces of strictly upper-triangular matrices
\begin{equation*}
\h_{\alpha} = 
\left \lbrace 
\begin{pmatrix}
0 & u & 0 \\
  & 0 & - ^{t} \bar{u} \\ 
  &   & 0
\end{pmatrix}
, \ u \in \C^{n-2}
\right \rbrace
\text{ and }
\h_{2\alpha} = 
\left \lbrace 
\begin{pmatrix}
0 & 0 & ic \\
  & 0 & 0 \\ 
  &   & 0
\end{pmatrix}
, \ c \in \R
\right \rbrace
\end{equation*}
and the bracket $\h_{\alpha} \times \h_{\alpha} \rightarrow \h_{2\alpha}$ is given by
\begin{equation*}
\forall u,v \in \C^k , \
\left [
\begin{pmatrix}
0 & u & 0 \\
  & 0 & - ~^{t} \bar{u} \\
  & & 0
\end{pmatrix}
,
\begin{pmatrix}
0 & v & 0 \\
  & 0 & - ~^{t} \bar{v} \\
  & & 0
\end{pmatrix}
\right ]
=
\begin{pmatrix}
0 & 0 & i \omega_{\C}(u,v) \\
 & 0 & 0 \\
 & & 0
\end{pmatrix}
.
\end{equation*}
Thus, the subspace $\h_{\alpha} \oplus \h_{2\alpha} \subset \su(1,n-1)$ is a Lie subalgebra isomorphic to $\heis^{\C}(2n-3)$.

\vspace*{0.2cm}

In $\sp(1,n-1)$ the root spaces of positive roots correspond to the subspace of strictly upper-triangular matrices
\begin{equation*}
\h_{\alpha} = 
\left \lbrace 
\begin{pmatrix}
0 & u & 0 \\
  & 0 & - ^{t} \bar{u} \\ 
  &   & 0
\end{pmatrix}
, \ u \in \H^{n-2}
\right \rbrace
\text{ and }
\h_{2\alpha} = 
\left \lbrace 
\begin{pmatrix}
0 & 0 & q \\
  & 0 & 0 \\ 
  &   & 0
\end{pmatrix}
, \ q \in \Span_{\R}(i,j,k)
\right \rbrace
\end{equation*}
and the Lie bracket $\h_{\alpha} \times \h_{\alpha} \rightarrow \h_{2\alpha}$ is given by
\begin{equation*}
\forall u,v \in \H^k , \
\left [
\begin{pmatrix}
0 & u & 0 \\
  & 0 & - ~^{t} \bar{u} \\
  & & 0
\end{pmatrix}
,
\begin{pmatrix}
0 & v & 0 \\
  & 0 & - ~^{t} \bar{v} \\
  & & 0
\end{pmatrix}
\right ]
=
\begin{pmatrix}
0 & 0 & \omega_{\H}(u,v) \\
 & 0 & 0 \\
 & & 0
\end{pmatrix}
.
\end{equation*}
Thus, the Lie subalgebra $\h_{\alpha} \oplus \h_{2\alpha} \subset \su(1,n-1)$ is isomorphic to $\heis^{\H}(4n-5)$.
\end{proof}

\subsubsection{The exceptional real rank $1$ simple Lie algebra}

There exists an exceptional real rank $1$ simple Lie algebra, noted $\mathfrak{f}_4^{-20}$. Our work does not need a complete description of it, we will just use the following lemma.

\begin{lem}
\label{lem:heis_f4}
The Lie algebra $\mathfrak{f}_4^{-20}$ contains a subalgebra isomorphic to $\heis^{\H}(7)$.
\end{lem}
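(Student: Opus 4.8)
The plan is to realize the required copy of $\heis^{\H}(7)$ inside a rank-$1$ subalgebra of $\mathfrak{f}_4^{-20}$ that we already understand, namely a copy of $\sp(1,2)$. Since $\mathfrak{f}_4^{-20}$ has real rank $1$ and, in its root-space decomposition $\mathfrak{f}_4^{-20} = \h_{-2\alpha} \oplus \h_{-\alpha} \oplus (\a \oplus \m_0) \oplus \h_{\alpha} \oplus \h_{2\alpha}$, the positive nilradical $\mathfrak{n}^+ := \h_{\alpha} \oplus \h_{2\alpha}$ has $\dim \h_{\alpha} = 8$ and $\dim \h_{2\alpha} = 7$, the bracket $\h_{\alpha} \times \h_{\alpha} \to \h_{2\alpha}$ endows $\mathfrak{n}^+$ with the structure of the octonionic Heisenberg algebra. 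First I would recall (or take as known from the classical description of the restricted root systems of real rank $1$ groups, e.g. via the Cayley plane) that $\mathfrak{n}^+$ is a $2$-step nilpotent Lie algebra whose center is $\h_{2\alpha} \cong \R^7 = \mathrm{Im}(\mathbb{O})$ and whose bracket on $\h_{\alpha} \cong \mathbb{O}$ is $[u,v] = u\bar v - v\bar u \in \mathrm{Im}(\mathbb{O})$.

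Next I would exhibit a quaternionic subalgebra. Fix a copy of $\H \subset \mathbb{O}$ (the span of $1,i,j,k$ for some standard choice of octonionic units), and let $W := \H \subset \mathbb{O} \cong \h_{\alpha}$ and $V := \mathrm{Im}(\H) = \Span_{\R}(i,j,k) \subset \mathrm{Im}(\mathbb{O}) \cong \h_{2\alpha}$. The key point to check is that $W$ is closed under the bracket: for $u,v \in \H$ one has $u\bar v - v\bar u \in \mathrm{Im}(\H) = V$, because on the associative subalgebra $\H$ the octonionic conjugation and product restrict to the quaternionic ones. Hence $\s := W \oplus V$ is a Lie subalgebra of $\mathfrak{n}^+$, its center is exactly $V$ (the bracket $W \times W \to V$ being the imaginary part of $\omega_{\H}$, which is surjective onto $V$ and has trivial radical), and comparing with the definition of $\heis^{\H}(4k+3)$ with $k=1$ gives $\s \cong \heis^{\H}(7)$. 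I would spell out the isomorphism explicitly by sending $q \in \H = W$ to $q \in \H^1$ and $a i + b j + c k \in V$ to $a Z_i + b Z_j + c Z_k$, and noting that $[q,q'] = q\bar{q'} - q'\bar q = \omega_{\H}(q,q')$ matches the defining relation.

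The main obstacle is simply pinning down the bracket structure of $\mathfrak{n}^+ \subset \mathfrak{f}_4^{-20}$ precisely enough to identify it with the octonionic Heisenberg algebra; once that identification is in place, the restriction to a quaternionic subfield is straightforward and mirrors verbatim the computation already carried out for $\sp(1,n-1)$ in the proof of Lemma~\ref{lem:heis_su_sp}. An alternative route, avoiding any explicit octonionic bookkeeping, is to invoke the known inclusion $\sp(1,2) \hookrightarrow \mathfrak{f}_4^{-20}$ (for instance, $\Sp(1,2) = \Sp(1)\cdot\Sp(2)$-type subgroup arising as the stabilizer of a quaternionic line in the Cayley hyperbolic plane, or more simply the fact that $\mathfrak{f}_4^{-20}$ contains $\o(1,8) \supset \o(1,8)$'s rank-$1$ subalgebras and also $\sp(1,2)$ as a subalgebra of the same restricted rank): then Lemma~\ref{lem:heis_su_sp} applied with $n=3$ already provides $\heis^{\H}(7) \subset \sp(1,2) \subset \mathfrak{f}_4^{-20}$ and nothing further is needed. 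I would present the second route as the clean proof and remark on the first as the conceptual reason it works.
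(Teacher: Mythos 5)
Your first, detailed route is essentially the paper's own proof: identify $\h_{\alpha}\oplus\h_{2\alpha}\subset\mathfrak{f}_4^{-20}$ with the octonionic Heisenberg algebra ($\dim\h_{\alpha}=8$, $\dim\h_{2\alpha}=7$, bracket $(x_1,x_2)\mapsto x_1\sigma(x_2)-x_2\sigma(x_1)$) and restrict to a quaternion subalgebra $\H\subset\mathbf{O}$; the paper locates that copy of $\H$ via the Cayley--Dickson construction, which is exactly your observation that conjugation and product restrict from $\mathbf{O}$ to the associative subalgebra $\H$, so that $\H\oplus\mathrm{Im}(\H)$ closes up to $\heis^{\H}(7)$. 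Your second route --- quoting the standard inclusion $\sp(1,2)\hookrightarrow\mathfrak{f}_4^{-20}$ and applying Lemma~\ref{lem:heis_su_sp} with $n=3$ to get $\heis^{\H}(7)\subset\sp(1,2)$ --- is genuinely different from what the paper does and is arguably cleaner, since it outsources all the bookkeeping to a lemma already proved; its only cost is that you must justify the embedding $\sp(1,2)\subset\mathfrak{f}_4^{-20}$, and your parenthetical there is the one imprecise spot: $\Sp(1)\cdot\Sp(2)$ is a compact group and is not $\Sp(1,2)$. The correct justification is that $\Sp(1,2)$ (up to a compact factor) is the stabilizer in $F_4^{-20}$ of a totally geodesic quaternionic hyperbolic plane $\H H^2$ inside the Cayley hyperbolic plane, and one should also note that this embedding can be chosen compatibly with the restricted root decompositions so that the resulting $\heis^{\H}(7)$ indeed sits inside $\h_{\alpha}\oplus\h_{2\alpha}$, which is how it is used later in Section~\ref{ss:exclusion}. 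With that fact in hand, either of your routes is a complete proof.
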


\begin{proof}
The roots system of $\mathfrak{f}^4_{-20}$ has also type $(BC)_1$, \textit{i.e.} we have a simple root $\alpha$ such that the restricted roots are $\{\pm \alpha, \pm 2\alpha\}$ (\cite{knapp}, p.717). We keep the same notations for the root-spaces. Similarly with the previous situations, $\h_{\alpha} \oplus \h_{2\alpha}$ is a unipotent subalgebra and we have $\dim \h_{\alpha} = 8$, $\dim \h_{2\alpha}=7$. Moreover, there is an identification $\h_{\alpha} \simeq \mathbf{O}$, algebra of octonions, and an identification of $\h_{2\alpha}$ with purely imaginary octotions, such that the Lie bracket $\h_{\alpha} \times \h_{\alpha} \rightarrow \h_{2\alpha}$ corresponds to $(x_1,x_2) \in \mathbf{O} \times \mathbf{O} \mapsto x_1 \sigma(x_2) - x_2\sigma(x_1)$, where $\sigma$ denotes the conjugation in $\mathbf{O}$.

The Cayley-Dickson construction shows that the octonions can be defined as pairs of quaternions endowed with a product built with products and conjugations of quaternions. It is thus not difficult to see (finding an appropriate copy of $\H$ in $\h_{\alpha}$) that $\h_{\alpha}$ contains a $4$-dimensional subspace $\h_{\alpha}' \subset \h_{\alpha}$ such that $\h_{\alpha}' \oplus [\h_{\alpha}',\h_{\alpha}'] \simeq \heis^{\H}(7)$.
\end{proof}

\subsection{Exclusion of the Lie algebras $\mathfrak{sp}(1,k)$, $k \geq 2$, and $\mathfrak{f}_4^{-20}$}
\label{ss:exclusion}

Let $H$ be a Lie group locally isomorphic to some $\Sp(1,k)$, $k \geq 2$ or $F_4^{-20}$. The aim of this section is to prove that there never exists a compact Lorentz manifold $(M,g)$, with $\dim M \geq 3$, on which $H$ acts faithfully and conformally.

We fix a compact manifold $(M^n,[g])$, with $n \geq 3$, endowed with a conformal class of Lorentz metrics $[g]$. Let $(M,\mathcal{C})$ be the corresponding normalized Cartan geometry modeled on $\Ein^{1,n-1}$ given by Cartan's theorem (section \ref{sss:einstein_cartan}). We note $G = \PO(2,n)$ and $P < G$ the stabilizer of an isotropic line in $\R^{2,n}$, so that we can identify $\Ein^{1,n-1} \simeq G/P$. We assume that $H < \Conf(M,[g])$, or equivalently that $H$ acts faithfully on $M$ by automorphisms of the Cartan geometry $(M,\mathcal{C})$.

\subsubsection{An embedding given by Proposition \ref{prop:morphisme_surjectif}}

For any such $H$, be it locally isomorphic to $Sp(1,k)$ or to $F_4^{-20}$, its root-space decomposition has the form $\h = \h_{-2\alpha} \oplus \h_{-\alpha} \oplus \h_0 \oplus \h_{\alpha} \oplus \h_{2\alpha}$ and there exists a subalgebra $\s \subset \h_{\alpha} \oplus \h_{2\alpha}$ isomorphic to $\heis^{\H}(7)$ (Lemmas \ref{lem:heis_su_sp} and \ref{lem:heis_f4}). We choose and fix such an $\s$, and note $S$ the corresponding connected Lie subgroup of $H$. 

The linear group $\Ad_{\h}(S) \subset \GL(\h)$ is unipotent. Thus, it is algebraic and do not contain any cocompact algebraic subgroup: this means that $S$ is discompact. Moreover, this subgroup is amenable (since it is solvable) and by compactness of $M$, there exists a finite measure $\mu$ which is preserved by the $S$-action on $M$. Therefore, Theorem \ref{thm:embed} ensures that there exists Zimmer points for $S$. Let $x$ be one of them.

Proposition \ref{prop:morphisme_surjectif} then shows that there exists an algebraic subgroup $P^{\hx} < P$ and an algebraic surjective morphism $\rho : P^{\hx} \rightarrow \Ad_{\h}(S)$. Our work consists in proving that such a morphism does not exist.

Since $P^{\hx}$ is algebraic, it admits an algebraic Levi decomposition $P^{\hx} = (LT) \ltimes U$, where $L$, $T$ and $U$ are algebraic subgroups of $P^{\hx}$, with $L$ real semi-simple, $U$ unipotent and $T$ a torus (see \cite{ratner}, Theorem 4.4.7). The group $\Ad_{\h}(S)$ being itself a linear unipotent group, the semi-simple subgroup $\rho(L) < \Ad_{\h}(S)$ has to be trivial. Moreover, $\rho$ being an algebraic morphism, it sends elliptic (resp. hyperbolic) elements of $P^{\hx}$ on elliptic (resp. hyperbolic) elements of $\Ad_{\h}(S)$ (see \cite{ratner}, Corollary 4.3.6). Thus, we must have $\rho(T) =\{\id\}$. Finally, the restriction $\rho|_U : U \rightarrow \Ad_{\h}(S)$ must be onto: it is not restrictive to assume that $P^{\hx}$ is unipotent.

\subsubsection{Unipotent subgroups of $P$}

Choose coordinates $x_1,\ldots,x_{n+2}$ on $\R^{2,n}$ in such a way that the quadratic form is written $2x_1x_{n+2} + 2x_2x_{n+1} + x_3^2 + \cdots x_n^2$ and such that the parabolic subgroup $P < \PO(2,n)$ is the stabilizer of the isotropic line $[1:0:\cdots:0]$. In such a basis, any matrix of $\o (2,n) $ has the form

\begin{equation*}
\begin{pmatrix}
a&b & u_1 & \cdots & u_{n-2} & \alpha & 0 \\
c &d & v_1 & \cdots & v_{n-2} & 0     & -\alpha \\
-z_1 & -w_1 &     &        &     &  -v_1  & -u_1 \\
\vdots & \vdots &     &   A    &     & \vdots & \vdots \\
-z_{n-2} & -w_{n-2} &   &        &     &  -v_{n-2}  & -u_{n-2} \\
\beta & 0 & w_1 & \cdots & w_{n-2} &  -d    & -b \\
0 & -\beta & z_1 & \cdots & z_{n-2} &  -c    & -a
\end{pmatrix}
\end{equation*}
where all the letters denote real numbers, except $A$ which is an element of $\o(n-2)$. Moreover the Lie algebra of $P$ corresponds to matrices verifying $c=z_1=\cdots=z_{n-2}=\beta=0$.

In such coordinates, let $\u_{\text{max}}$ be the unipotent subalgebra of $\p$ composed with the strictly upper-triangular matrices
\begin{equation*}
\begin{pmatrix}
0&t & u_1 & \cdots & u_{n-2} & \alpha & 0 \\
 &0 & v_1 & \cdots & v_{n-2} & 0      & -\alpha \\
 &  &     &        &     &  -v_1  & -u_1 \\
 &  &     &    0   &     & \vdots & \vdots \\
 &  &     &        &     &  -v_{n-2}  & -u_{n-2} \\
 &  &     &        &     &  0     & -t \\
 &  &     &        &     &        & 0
\end{pmatrix}
\in \p.
\end{equation*}

\begin{fact*}
This Lie algebra $\u_{\max}$ is isomorphic to a semi-direct product $\R \ltimes \heis^{\C}(2n-3)$.
\end{fact*}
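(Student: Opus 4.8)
The plan is to do everything by hand in the matrix model. First I would parametrize a general element of $\u_{\max}$ by a quadruple: write $X(t,u,v,\alpha)$ for the matrix displayed above, where $t,\alpha\in\R$ and $u={}^{t}(u_1,\dots,u_{n-2})$, $v={}^{t}(v_1,\dots,v_{n-2})$ lie in $\R^{n-2}$. The only computation in the proof is the matrix commutator, which I expect to come out as
\begin{equation*}
[X(t,u,v,\alpha),X(t',u',v',\alpha')]=X\bigl(0,\ tv'-t'v,\ 0,\ \langle u',v\rangle-\langle u,v'\rangle\bigr),
\end{equation*}
where $\langle\cdot,\cdot\rangle$ is the standard Euclidean product on $\R^{n-2}$. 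This single identity does most of the work: it confirms that $\u_{\max}$ really is a subalgebra of $\p$ (the bracket of two elements of the displayed form has the displayed form again, with zero $t$- and $v$-slots); it shows that $\mathfrak{n}:=\{X(0,u,v,\alpha)\}$ is an ideal of codimension one (every bracket lies in $\mathfrak{n}$, having zero $t$-slot); and it shows that $\R T$, with $T:=X(1,0,0,0)$, is a one-dimensional, hence abelian, subalgebra with $\R T\oplus\mathfrak{n}=\u_{\max}$. Consequently $\u_{\max}=\R T\ltimes\mathfrak{n}$ is a semi-direct product, the action of $T$ being the derivation $\ad(T)\colon X(0,u,v,\alpha)\mapsto X(0,v,0,0)$, which is nilpotent of square zero.

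It then remains to recognize $\mathfrak{n}$ as $\heis^{\C}(2n-3)$. Restricting the bracket formula to $\mathfrak{n}$ gives $[X(0,u,v,\alpha),X(0,u',v',\alpha')]=(\langle u',v\rangle-\langle u,v'\rangle)Z$ with $Z:=X(0,0,0,1)$. Hence $\R Z$ is simultaneously the center and the commutator ideal of $\mathfrak{n}$, the algebra $\mathfrak{n}$ is two-step nilpotent of dimension $2(n-2)+1=2n-3$, and the bracket descends to the alternating bilinear form $B\bigl((u,v),(u',v')\bigr)=\langle u',v\rangle-\langle u,v'\rangle$ on $\mathfrak{n}/\R Z\simeq\R^{n-2}\oplus\R^{n-2}$. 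A one-line check (take $u'=0$, then $v'=0$) shows $B$ is non-degenerate. Since any non-degenerate alternating form on a $2(n-2)$-dimensional real vector space is linearly equivalent to the standard symplectic form $\omega_{\C}$ of $\C^{n-2}$, one obtains a linear identification $\R^{n-2}\oplus\R^{n-2}\simeq\C^{n-2}$ carrying $B$ to $\omega_{\C}$, under which $(u,v)+cZ\mapsto X(0,u,v,c)$ is a Lie algebra isomorphism $\heis^{\C}(2n-3)\to\mathfrak{n}$. Combined with the previous paragraph this yields $\u_{\max}\simeq\R\ltimes\heis^{\C}(2n-3)$.

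The only genuinely delicate point is the commutator computation itself, because the matrices are $(n+2)\times(n+2)$ and the blocks labelled $u$ and $v$ occur twice — once in the top two rows, once in the last two columns, with the signs forced by membership in $\o(2,n)$ — so a naive expansion produces several terms that look as if they could survive. The observations that make the bookkeeping manageable are that in $\u_{\max}$ the diagonal vanishes and the central $\o(n-2)$-block vanishes; these kill all the would-be cross terms except the four above, and most of the remaining cancellations (for instance in the $(1,n+2)$ entry) are forced by the antisymmetry of the bracket. Everything after that — the ideal/subalgebra structure, the non-degeneracy of $B$, and the uniqueness of the symplectic form — is routine.
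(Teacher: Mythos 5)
Your proof is correct and follows essentially the same route as the paper: an explicit bracket computation in the matrix model, identification of the slice $\{t=0\}$ with $\heis^{\C}(2n-3)$ as a codimension-one ideal, and the description of the $\R$-action as the derivation $(u,v,\alpha)\mapsto(v,0,0)$. The only cosmetic difference is that the paper writes down the complex coordinates $u_j+iv_j$ directly, whereas you recover the Heisenberg structure abstractly from the non-degeneracy of the induced alternating form; both are fine.
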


\begin{proof}
Recall that $\heis^{\C}(2n-3)$ is $\C^{n-2} \oplus \R. Z$ endowed with a nilpotent Lie algebra bracket such that $\R.Z$ is the center. A basic linear algebra computation shows that $\u_{\max} \cap \{t=0\}$ is isomorphic to $\heis^{\C}(2n-3)$, the correspondence being given by
\begin{equation*}
M \in \u_{\max} \cap \{t=0\} \mapsto (u_1+iv_1,\ldots ,u_{n-2}+iv_{n-2}) + \alpha Z.
\end{equation*}
Moreover, another elementary computation gives that $\u_{\max} \cap \{t=0\}$ is an ideal of $\u_{max}$ of codimension $1$ and that $\u_{\max} \simeq \R \ltimes \heis^{\C}(2n-3)$ where $\R$ acts by derivations on $\heis^{\C}(2n-3)$ by
\begin{equation*}
\forall t \in \R, u,v \in \R^{n-2}, \ \alpha \in \R, \ \ t . ((u+iv) + \alpha.Z) = (tv + i.0) + 0.Z \in \C^d \oplus \R.Z.
\end{equation*}
\end{proof}

Note $U_{\max} = \exp(\u_{\max})$. We now prove the

\begin{prop}
Every connected unipotent subgroup of $P$ is conjugated (in $P$) to a subgroup of $U_{\max}$.
\end{prop}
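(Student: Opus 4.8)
The statement is a standard conjugacy-of-maximal-unipotents result for the specific parabolic $P < \PO(2,n)$. The plan is to reduce everything to the conjugacy theory of unipotent subgroups inside the reductive quotient and the Levi decomposition of $P$, then identify $U_{\max}$ as a maximal unipotent subgroup of $P$. First I would recall the structure of $P$: writing $P = G_0 \ltimes U$ for its Levi decomposition, where $U$ is the unipotent radical of $P$ and $G_0 \simeq (\R^* \times \SO(1,n-1)) \ltimes \cdots$ is the reductive Levi factor (in the coordinates fixed above, $U$ corresponds to $b = 0$ among the $\p$-matrices after quotienting by the diagonal/block-orthogonal part, and $U_{\max}$ is exactly $U$ together with the one extra nilpotent direction $\{t \neq 0\}$ coming from a maximal unipotent of the $\SO(1,n-1)$-factor acting on the Minkowski block). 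The key point to isolate is that $U_{\max} = \exp(\u_{\max})$ is a maximal connected unipotent subgroup of $P$: indeed $\u_{\max}$ is, by the Fact just proven, $\R \ltimes \heis^\C(2n-3)$, and a dimension count shows $\dim \u_{\max} = (n-2) + (n-2) + 1 + 1 = 2n - 2$, which matches the dimension of a maximal unipotent subgroup of $\PO(2,n)$ (a Borel's unipotent radical has dimension equal to the number of positive roots, and these positive roots all survive in $P$ since $P$ is a maximal parabolic of $\R$-rank-$2$ group where only one simple root is ``inverted'').

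Given this, the argument runs as follows. Let $V < P$ be any connected unipotent subgroup. Consider the projection $\bar\pi : P \to G_0 = P / U$ onto the reductive Levi factor. The image $\bar\pi(V)$ is a connected unipotent subgroup of the reductive group $G_0$; writing $G_0 = Z \cdot \SO(1,n-1) \cdot (\text{compact})$ up to isogeny, unipotent subgroups of $G_0$ land (after conjugation in $G_0$) inside a fixed maximal unipotent subgroup $\bar N$ of the $\SO(1,n-1)$-factor, which is one-dimensional and abelian (the root group of the restricted root of $\o(1,n-1)$, which in our matrix model is exactly the $\{t \neq 0\}$ part). So, after conjugating $V$ by an element of $G_0 \subset P$, we may assume $\bar\pi(V) \subset \bar N$. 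Then $V \subset \bar\pi^{-1}(\bar N) = \bar N \ltimes U$, and one checks directly that $\bar N \ltimes U$ is precisely $U_{\max}$ (both are the preimage of the $t$-line, both equal $\exp$ of the upper-triangular matrices displayed). Hence $V \subset U_{\max}$ after conjugation, which is the claim.

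**Where the work is.** The two non-formal inputs are: (i) that $U_{\max}$ really is all of $\bar\pi^{-1}(\bar N)$ — this is a direct matrix verification using the explicit form of $\o(2,n)$ given above (every matrix of $\p$ with $b$ arbitrary and the ``lower-right'' data forced to vanish by unipotency, plus the constraint that the semisimple part of the Levi acts unipotently, collapses to the displayed strictly-upper-triangular shape); and (ii) the fact that every connected unipotent subgroup of the reductive group $G_0$ is $G_0$-conjugate into the fixed one-parameter subgroup $\bar N$. For (ii) I would invoke the standard fact that in a linear reductive group over $\R$, any unipotent subgroup is conjugate into the unipotent radical of any minimal parabolic subgroup containing a maximal such; since the $\o(1,n-1)$-part has $\R$-rank $1$ with a single (up to sign) restricted root of multiplicity $n-2$ — wait, no: the unipotent radical of the minimal parabolic of $\SO(1,n-1)$ has dimension $n-2$, not $1$. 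So (ii) should be stated more carefully: a connected unipotent subgroup of $G_0$ is conjugate into $\bar N := $ unipotent radical of a minimal parabolic of the $\SO(1,n-1)$-factor, which has dimension $n-2$; but then $\bar\pi^{-1}(\bar N)$ has dimension $(n-2) + \dim U$, and one must recheck that this equals $\dim \u_{\max} = 2n-2$, i.e. that $\dim U = n$. A clean way around the bookkeeping is to not split off $G_0$ at all but instead argue inside $G = \PO(2,n)$ directly: any connected unipotent subgroup of $G$ is conjugate into a fixed maximal unipotent $N_G$ of $G$ (dimension $= \#\{\text{positive roots}\} = 2n-2$); if moreover $V \subset P$, then after conjugating by an element of $P$ (using the Bruhat-type fact that $N_G \cap P$ meets every $P$-conjugacy class of unipotents of $P$, since $P$ is a \emph{standard} parabolic containing $N_G$) we get $V \subset N_G \cap P = N_G$, and finally identify $N_G = U_{\max}$ by the matrix description. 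The main obstacle is getting this last conjugacy-into-a-standard-unipotent statement cleanly over $\R$ (rather than over an algebraically closed field) — the right reference is the theory of parabolic subgroups of real reductive groups (Borel--Tits), which guarantees that a unipotent subgroup normalized by... — more simply, since $\Ad_\h(V)$ is a connected unipotent subgroup of the algebraic group $\PO(2,n)$ and $U_{\max}$ is the unipotent radical of a Borel of $\PO(2,n)$ contained in $P$, the conclusion follows from the conjugacy of maximal unipotent subgroups together with the fact that any unipotent subgroup of $P$ normalizes, hence is conjugate into, one such.

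**Summary of the proof I would write.** Realize $U_{\max}$ as the unipotent radical of a Borel subgroup $B$ of $G = \PO(2,n)$ with $B < P$ (this is just reading off the matrices). Given a connected unipotent $V < P$, use conjugacy of maximal unipotent subgroups of the reductive group $G$ to find $g \in G$ with $gVg^{-1} \subset U_{\max}$; then show $g$ can be taken in $P$, because $V$ and $U_{\max}$ both lie in the standard parabolic $P$ and any two Borels inside $P$ are $P$-conjugate (so their unipotent radicals are too, and $V$ lies in at least one such unipotent radical). Conclude $V \subset g^{-1} U_{\max} g$ with $g \in P$. The hard part is packaging the real-group conjugacy statement with the right citation; everything else is the explicit linear algebra already set up in the preceding Fact.
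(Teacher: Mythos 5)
Your strategy is sound, and its skeleton --- project a connected unipotent $V<P$ to the reductive quotient $P/\mathrm{Rad}_u(P)\simeq \R^*\times O(1,n-1)$, conjugate the image into a fixed maximal unipotent subgroup of the $O(1,n-1)$-factor, and check that the preimage of that subgroup in $P$ is exactly $U_{\max}$ --- is precisely the paper's. Where you genuinely diverge is in how the one substantive sub-step is justified. You outsource it to the Borel--Tits package: every unipotent subgroup of a real reductive group lies in the unipotent radical of a minimal $\R$-parabolic, all such radicals are conjugate over $\R$, and the minimal parabolics of $G$ contained in $P$ are $P$-conjugate (your Levi-quotient argument does correctly reduce the ``can $g$ be taken in $P$'' issue to this). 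The paper instead keeps everything elementary and self-contained: it proves by hand, via Engel's theorem applied to $E^{\perp}$ where $E$ is the common kernel of the centre of $\u'$, that a connected unipotent subgroup of $O(1,n-1)$ fixes a nonzero isotropic vector, hence sits in the stabilizer of such a vector; since the $O(n-2)$-block of that stabilizer is compact and contains no nontrivial unipotents, the subgroup lands in the standard maximal unipotent. Your route buys brevity and generality at the price of a citation that must be phrased carefully over $\R$ (as you note, there are no Borel subgroups defined over $\R$ here, so everything must be said with minimal parabolics); the paper's route costs a page of linear algebra but needs no structure theory. Two slips to correct if you write yours up: the $\{t\neq 0\}$ direction (the entry $b$) lies in the unipotent radical of $P$, not in the $O(1,n-1)$-factor --- it is the $v$-entries that project isomorphically onto the $(n-2)$-dimensional maximal unipotent of $O(1,n-1)$, so $U_{\max}$ is the $n$-dimensional unipotent radical of $P$ extended by that $(n-2)$-dimensional piece, not by ``one extra nilpotent direction''; and your count $\dim\u_{\max}=2n-2$ is right, matching the sum of the multiplicities of the positive restricted roots of $\o(2,n)$ (type $B_2$ with short roots of multiplicity $n-2$), which confirms that $U_{\max}$ is indeed the unipotent radical of a minimal parabolic contained in $P$.
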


\begin{proof}
Let $U$ be a unipotent subgroup of $P$. In the same coordinates $x_1,\ldots,x_{n+2}$, the group $P$ is given by
\begin{equation*}
P = 
\left \{
\begin{pmatrix}
a & *&* \\
  & A &* \\
  & & a^{-1}
\end{pmatrix}
\in O(2,n)
\ ; \ a \in \R^*, \ A \in O(1,n-1)
\right \} 
\text{ mod } \{\pm \id\}
\end{equation*}
In what follows, everything is implicitly considered modulo $\{\pm \id\}$. Let $\pi_{\ell} : P \rightarrow \R^* \times O(1,n-1)$ be the map associating the diagonal matrix
$
\begin{pmatrix}
a & & \\
  & A & \\
  & & a^{-1}
\end{pmatrix}
$
. It is a Lie group morphism, and since $U$ is unipotent, we have $\pi_{\ell}(U) = \{1\} \times U'$ where $U'$ is now a unipotent subgroup of $O(1,n-1)$.

\begin{lem}
Let $U'$ be a connected unipotent subgroup of $O(1,n-1)$. There exists a non-zero isotropic vector of $\R^{1,n-1}$ which is fixed by all the elements of $U'$.
\end{lem}

\begin{proof}
Assume $\u' \neq 0$ and let $\z$ be the center of $\u'$. By hypothesis, in a suitable basis all elements of $\u'$ are upper triangular with $0$'s on the diagonal. The subspace $E = \bigcap_{Z \in \z} \ker Z \subset \R^{1,n-1}$ is thus non-trivial. Every $X \in \u'$ leaves $E$ stable, and necessarily its orthogonal relatively to the Lorentzian scalar product of $\R^{1,n-1}$. When we restrict the matrices of $\u'$ to $E^{\perp}$, we obtain a subalgebra of $\gl(E^{\perp})$ which is composed with nilpotent linear morphisms. Then, Engel's Theorem (\cite{knapp}, Theorem 1.35) gives us a line in $E^{\perp}$ which belongs to the kernel of every element of $\u'$. Therefore, $E \cap E^{\perp} \neq 0$ and contains a vector $v \neq 0$ such that $X(v) = 0$ for all $X \in \u'$.
\end{proof}

Consequently, the unipotent subgroup $U' < O(1,n-1)$ is contained in the stabilizer of some isotropic vector of $\R^{1,n-1}$. Such a stabilizer is conjugated, in $O(1,n-1)$, to
\begin{equation*}
\left \{
\begin{pmatrix}
1 & * & * \\
  & A & * \\
  & & 1
\end{pmatrix}
\in O(1,n-1)
 \ ; \ A \in O(n-2)
\right \}.
\end{equation*}
The compact group $O(n-2)$ does not contain unipotent elements, and finally there is $g \in O(1,n-1)$ such that 
\begin{equation*}
gU'g^{-1} \subset 
\left \{
\begin{pmatrix}
1 & * & * \\
  & \id & * \\
  & & 1
\end{pmatrix}
\in O(1,n-1)
\right \}.
\end{equation*}
Now, if we note $p := 
\begin{pmatrix}
1 & & \\
  & g & \\
  & & 1
\end{pmatrix}
\in P
$
, we see that $\Ad(p) \u \subset \u_{\max}$.
\end{proof}

Finally, the existence of a Zimmer point for $S$ gives a Lie subalgebra $\u < \u_{\max}$ and a Lie algebra morphism 
\begin{equation}
\u \rightarrow \ad_{\h}(\heis^{\H}(7)) \simeq \heis^{\H}(7)
\end{equation}
which is onto. We are now in position to prove that such a morphism does not exist, contradicting the existence of a conformal action of $H$ on $(M,[g])$. We thus finish this section with the following algebraic lemma.

\subsubsection{Algebraic incompatibility}

\begin{lem}
If $\u \subset \u_{\max} \simeq \R \ltimes \heis^{\C}(2n-3)$ is a subalgebra, a Lie algebra morphism $\u \rightarrow \heis^{\H}(7)$ is never onto.
\end{lem}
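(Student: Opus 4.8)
The plan is to reach a contradiction by examining the derived series and the structure of nilpotent quotients. The key structural fact is that $\heis^{\H}(7)$ has a $3$-dimensional center equal to its commutator ideal, while $\heis^{\C}(2n-3)$ has a $1$-dimensional center equal to its commutator ideal; and $\u_{\max} \simeq \R \ltimes \heis^{\C}(2n-3)$ is a solvable Lie algebra whose nilradical is $\heis^{\C}(2n-3)$, with the extra $\R$-factor acting by a rank-$\leq(n-2)$ nilpotent derivation (as computed in the Fact). I would first record that $[\u_{\max}, \u_{\max}]$ is contained in $\heis^{\C}(2n-3)$, in fact it equals $\heis^{\C}(2n-3)$ modulo the fact that the $\R$-action hits a Lagrangian-type subspace; more precisely the derived algebra of $\u_{\max}$ lies inside $\heis^{\C}(2n-3)$, hence so does the derived algebra of any subalgebra $\u$.

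First I would reduce to the case $\u' := \u \cap \heis^{\C}(2n-3)$, which has codimension at most $1$ in $\u$ and is a subalgebra of the classical Heisenberg algebra. Any surjective morphism $\phi : \u \twoheadrightarrow \heis^{\H}(7)$ carries $[\u,\u]$ onto $[\heis^{\H}(7),\heis^{\H}(7)] = Z(\heis^{\H}(7))$, which is $3$-dimensional. Since $[\u,\u] \subset \u' \subset \heis^{\C}(2n-3)$ and the latter is two-step nilpotent with $1$-dimensional commutator, the bracket on $\u'$ (and a fortiori on $[\u,\u]$) takes values in the $1$-dimensional space $\R Z$. So $\phi$ restricted to $[\u,\u]$ is a linear surjection onto a $3$-dimensional space whose source is a subalgebra of $\heis^{\C}(2n-3)$. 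Now I would chase the center: $\phi([\u,\u])$ lies in $Z(\heis^{\H}(7))$, and $\phi^{-1}(Z(\heis^{\H}(7))) \supset [\u,\u]$ is an ideal; moreover $\phi$ induces a surjection of abelian quotients $\u/[\u,\u] \twoheadrightarrow \heis^{\H}(7)/Z(\heis^{\H}(7)) \cong \R^4$, and the commutator pairing on the target is the quaternionic symplectic form $\omega_{\H}$, which is a non-degenerate $\mathrm{Sp}(1)$-worth of symplectic forms — in particular its associated bracket $\Lambda^2 \R^4 \to \R^3$ is surjective. Pulling back, the bracket on $\u/[\u,\u]$ composed with $\phi$ must already be surjective onto $\R^3$, but this bracket factors through $\u' \to \R Z$, a one-dimensional space: the image of $\Lambda^2(\u/[\u,\u])$ under the commutator map, followed by $\phi$, lands in $\phi(\R Z)$, which is at most one-dimensional. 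That contradicts surjectivity onto the $3$-dimensional $Z(\heis^{\H}(7))$.

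I would write this out carefully by noting the following clean chain: $\phi$ surjective $\Rightarrow$ $\phi([\u,\u]) = [\heis^{\H}(7),\heis^{\H}(7)]$; but $[\u,\u] \subseteq [\u_{\max},\u_{\max}] \cap \u \subseteq \heis^{\C}(2n-3) \cap \u$, and on $\heis^{\C}(2n-3)$ every bracket lies in the $1$-dimensional center $\R Z$; since $\phi$ is a Lie algebra morphism, $\phi([\u,\u]) = [\phi(\u),\phi(\u)] = [\heis^{\H}(7),\heis^{\H}(7)]$ must therefore equal $\phi$ of something lying in $\R Z$ — wait, the right formulation is: $[\u,\u]$ is spanned by brackets $[x,y]$ with $x,y\in\u$; writing $x = x' + s a$, $y = y' + t a$ with $x',y' \in \u'$ and $a$ spanning the complementary $\R$-direction, we get $[x,y] = [x',y'] + s[a,y'] - t[a,x']$; the first term is in $\R Z$ and the other two are in the image of the derivation $\mathrm{ad}(a)$ restricted to $\u'$, which the Fact shows lands in the subspace $\{(tv, 0) : v \in \R^{n-2}\} \oplus 0 \cdot Z \subset \C^{n-2}\oplus\R Z$; crucially this image meets $\R Z$ trivially. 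Hence $[\u,\u] \subseteq \R Z \oplus \mathrm{im}(\mathrm{ad}(a)|_{\u'})$, a subspace on which the bracket of $\heis^{\C}$ vanishes (because $\R Z$ is central and $\mathrm{im}(\mathrm{ad}(a))$ is isotropic, being a piece of a Lagrangian for $\omega_{\C}$) — so $[\u,\u]$ is an \emph{abelian} ideal of $\u$. Then $[\heis^{\H}(7),\heis^{\H}(7)] = \phi([\u,\u])$ would be abelian, which it is (the center is abelian), so that alone is not the contradiction; instead I push one step further: the three-step structure $\u \supseteq [\u,\u] \supseteq [\u,[\u,\u]]$. Since $[\u,\u]$ is central-plus-isotropic hence central in the nilradical up to the $\R$-action, one computes $[\u,[\u,\u]] = 0$, so $\u$ is at most two-step nilpotent as an \emph{abstract} quotient onto its image — no, this needs care because $\u$ itself need not be nilpotent.

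The cleanest contradiction, and the one I expect to actually use, is dimensional together with the rank of the commutator map. Let me restate it as the main argument: write $V = \u/[\u,\u]$ and $W = \heis^{\H}(7)/Z = \R^4$, so $\phi$ induces a surjection $\bar\phi : V \twoheadrightarrow W$ and a commutative square relating the commutator maps $c_\u : \Lambda^2 V \to [\u,\u]$ and $c_{\heis^{\H}} : \Lambda^2 W \to Z \cong \R^3$, via $\phi|_{[\u,\u]} \circ c_\u = c_{\heis^{\H}} \circ \Lambda^2\bar\phi$. The right-hand map $c_{\heis^{\H}}$ is surjective onto $\R^3$ (the three imaginary-quaternion components of $\omega_{\H}$ are linearly independent as alternating forms on $\R^4$), and $\Lambda^2\bar\phi$ is surjective, so the left-hand composite is surjective onto $\R^3$; hence $\mathrm{rank}\, c_\u \geq 3$, i.e. the image of the commutator map on $\u$, as a subspace of $[\u,\u]$, is at least $3$-dimensional. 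But by the displayed analysis above, $\mathrm{im}(c_\u) \subseteq \R Z + \mathrm{im}(\mathrm{ad}(a)|_{\u'})$, and more importantly $\phi$ of this is at most $\phi(\R Z) + \phi(\mathrm{im}(\mathrm{ad}(a)))$; here $\mathrm{ad}(a)|_{\u'}$ has image contained in the commutator-vanishing isotropic subspace, but that does not bound $\phi$ of it directly. The genuinely correct bound is simpler: $\phi(\mathrm{im}\,c_\u) \subseteq \phi([\u,\u])$, and since $\phi([\u,\u]) = Z(\heis^{\H}(7))$ is exactly $3$-dimensional we have equality — so the rank count is consistent and I have \emph{not} yet found the contradiction. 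The real obstruction — and I expect this to be the crux of the write-up — is that $[\u,\u]$, being contained in $\R Z + \mathrm{im}(\mathrm{ad}(a))$ which is \emph{abelian}, forces $\phi([\u,\u])$ to be a $3$-dimensional abelian subalgebra equal to $Z(\heis^{\H}(7))$ — fine — but then look at $[\u, [\u,\u]]$: since $[\u,\u]$ lies in the nilradical and the nilradical is $\heis^{\C}(2n-3)$ with $[\u,\u]$ landing in $\R Z + \{(v,0)\}$, one has $[\heis^{\C}, \R Z + \{(v,0)\}] \subseteq \R Z$, and $[a, \R Z + \{(v,0)\}] = 0$; combining, $[\u, [\u,\u]] \subseteq \R Z$, which is $1$-dimensional, so $\phi([\u,[\u,\u]])$ is at most $1$-dimensional. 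On the other side $[\heis^{\H}(7),[\heis^{\H}(7),\heis^{\H}(7)]] = [\heis^{\H}(7), Z] = 0$, also consistent. So the contradiction must come from \emph{how} $[\u,\u]$ sits: I will argue that $\phi^{-1}(Z(\heis^{\H}(7)))$ contains $[\u,\u]$ and has codimension $4$ in $\u$, while $[\u,\u]$ itself — being abelian and isotropic of the shape above inside an $(2n-2)$-dimensional-plus-$\R$ algebra whose commutator is $1$-dimensional — has the property that $\u/[\u,\u]$ admits a non-degenerate $\R^3$-valued pairing only if $[\u,\u]$ were at least $3$-dimensional \emph{and} the pairing $\Lambda^2(\u/[\u,\u])\to[\u,\u]$ were surjective, yet this pairing factors through the rank-$1$ Heisenberg commutator on $\u'$ plus the rank-$(n-2)$ derivation $\mathrm{ad}(a)$, whose combined image under $\phi$ cannot be a \emph{non-degenerate} $\mathbb{H}$-symplectic system on $\R^4$ because such a system requires three independent symplectic forms sharing no common isotropic — whereas $\{(v,0)\}$ is a common isotropic for all brackets coming from $\u'$. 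That last sentence, made precise, is the contradiction, and pinning down "common isotropic" cleanly is the step I expect to be the main obstacle; concretely I will exhibit a nonzero $\bar w \in W$ in the image of $\bar\phi$ of the isotropic direction whose pairing against all of $W$ is zero, contradicting non-degeneracy of $\omega_{\H}$ on $\R^4$.
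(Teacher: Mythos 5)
Your write-up does not close the argument: you acknowledge twice that you have ``not yet found the contradiction,'' and the concrete plan you settle on at the end --- exhibit a nonzero $\bar w\in W=\heis^{\H}(7)/Z$ whose $\omega_{\H}$-pairing against all of $W$ vanishes --- cannot work. No such $\bar w$ exists, precisely because $\omega_{\H}$ \emph{is} non-degenerate and your pairing on $\u$ descends exactly to $\omega_{\H}$ on $W$; and the candidate you propose does not produce one: the subspace $\{(v,0)\}$ being $\omega_{\C}$-isotropic only says that two of \emph{its own} vectors bracket to zero, not that a vector $w$ in it satisfies $[w,y]\in\ker\phi$ for \emph{all} $y\in\u$ (indeed $[w,y]=\omega_{\C}(w,y)Z$ is generally nonzero for $y\in\u'$ outside the Lagrangian). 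So the ``common isotropic vector'' is the wrong target, and as written the proof has a genuine gap at its decisive step.

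The gap is fixable with the pieces you already assembled, but the correct statement is about a \emph{subspace}, not a vector. Set $\u'=\u\cap\heis^{\C}(2n-3)$ and $W'=\bar\phi(\u')\subset W\cong\H$. Since $\u'$ has codimension at most $1$ in $\u$ and $\phi$ is onto, $\dim W'\geq 3$. For $x,y\in\u'$ one has $[x,y]=\omega_{\C}(x,y)Z$, so all the brackets $[\phi(x),\phi(y)]=\phi([x,y])$ are proportional to one another: $\omega_{\H}(W',W')$ spans at most a line $\R\zeta$ inside $Z(\heis^{\H}(7))\cong\mathrm{Im}\,\H$. Choose a nonzero imaginary quaternion $q$ orthogonal to $\zeta$; then the real $2$-form $\omega_q:=\langle\omega_{\H}(\cdot,\cdot),q\rangle$ vanishes identically on the $3$-dimensional $W'$. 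But $\omega_q$ is a non-degenerate symplectic form on $\R^4$ for every nonzero imaginary $q$ (for $u\neq 0$, $\omega_{\H}(u,uq)=-2uq\bar u$ ranges over all of $\mathrm{Im}\,\H$ as $q$ does), and a symplectic $\R^4$ has no isotropic $3$-plane. That is the contradiction; everything in your detour through $[\u,\u]$, $[\u,[\u,\u]]$ and the derived series can be discarded. For comparison, the paper reaches the same underlying dimension count ($[\heis^{\C},\heis^{\C}]$ is $1$-dimensional versus the $3$-dimensional $[\heis^{\H}(7),\heis^{\H}(7)]$) by a more elementary route: it picks preimages $X_1,\dots,X_4$ of $U,U_i,U_j,U_k$, forces their $\R$-components to vanish by a small rank computation, and then finds two independent central elements in the at-most-one-dimensional space $f(\u\cap[\heis^{\C},\heis^{\C}])$. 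Your route, once completed as above, replaces that computation by the non-degeneracy of the three symplectic components of $\omega_{\H}$; both are fine, but yours is not yet a proof.
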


\begin{proof}
Let $Z_i$, $Z_j$ et $Z_k$ be three linearly independent elements of the center of $\heis^{\H}(7)$ and $U$, $U_i$, $U_j$, $U_k$ elements of $\heis^{\H}(7)$ whose bracket relations are
\begin{center}	
\begin{tabular}{|c|c|c|c|c|}
\hline 
[.,.] & $U$ & $U_i$ & $U_j$ & $U_k$ \\
\hline
$U$ & 0 & $Z_i$ & $Z_j$ & $Z_k$\\
\hline
$U_i$ & $-Z_i$ & 0 & $Z_k$ & $-Z_j$ \\
\hline
$U_j$ & $-Z_j$ & $-Z_k$ & 0 & $Z_i$\\
\hline
$U_k$ & $-Z_k$ & $Z_j$ & $-Z_i$ & 0 \\
\hline
\end{tabular}.
\end{center}
Let $f : \u \rightarrow \heis^{\H}(7)$ be a surjective morphism. We choose $X_1,X_2,X_3,X_4$ pre-images in $\u$ of $U, \, U_i, \, U_j, \ U_k$ respectively. Since $\u_{\max} \simeq \R \ltimes \heis^{\C}(2n-3)$, each $X_i$ can be written $X_i = (t_i,X_i')$, where $t_i \in \R$ and $X_i' \in \heis(2n-3)$. We claim that $t_1=t_2=t_3=t_4=0$.

\begin{proof}
Necessarily, $t_2X_1-t_1X_2$, $t_3X_1-t_1X_3$, $t_4X_1-t_1X_4 \: \in \u \cap \heis^{\C}(2n-3)$. We compute that
\begin{align*}
 f([t_2X_1-t_1X_2,t_3X_1-t_1X_3]) & = -t_1t_2Z_j + t_1t_3Z_i + t_1^2Z_k \\
 f([t_2X_1-t_1X_2,t_4X_1-t_1X_4]) & = -t_1t_2Z_k + t_1t_4Z_i -t_1^2Z_j \\
 f([t_3X_1-t_1X_3,t_4X_1-t_1X_4]) & = -t_1t_3Z_k + t_1t_4Z_j + t_1^2Z_i .
\end{align*}  
These elements belong to $D := f(\u \cap [\heis^{\C}(2n-3),\heis^{\C}(2n-3)]) \subset \heis^{\H}(7)$, but since $[\heis^{\C}(2n-3),\heis^{\C}(2n-3)]$ is $1$-dimensional, we have $\dim D \leq 1$.
Therefore,
\begin{equation*}
 \Rk
 \begin{pmatrix}
  t_1t_3 & -t_1t_2 & t_1^2 \\
  t_1t_4 & -t_1^2 & -t_1t_2 \\
  t_1^2 & t_1t_4 & -t_1t_3
 \end{pmatrix}
\leq 1.
\end{equation*}
In particular, the minor
$\begin{vmatrix}
 t_1t_4 & -t_1^2 \\
 t_1^2 & t_1t_4
\end{vmatrix}$
must vanish. Then, $t_1 = 0$. In the same way, we prove $t_2 = t_3 = t_4 = 0$.
\end{proof}

We still note $D = f(\u \cap [\heis^{\C}(2n-3),\heis^{\C}(2n-3)])$. Now, we know that $X_1,X_2,X_3,X_4 \in \heis^{\C}(2n-3)$. Consequently, we must have
\begin{align*}
f([X_1,X_2]) = [U,U_i] = Z_i \in D \\
f([X_1,X_3]) = [U,U_j] = Z_j \in D.
\end{align*}
This contradicts $\dim D \leq 1$, and the existence of $f$.
\end{proof}

\subsection{Orbits of Zimmer points}
\label{ss:orbits}

As announced, we now consider conformal actions of Lie groups locally isomorphic to $\SO(1,k)$ or $\SU(1,k)$. Such actions exist: we can embed $\PO(1,k)$ into $\PO(2,k) = \Conf(\Ein^{1,k-1})$ and $\PSU(1,k)$ into $\PO(2,2k) = \Conf(\Ein^{1,2k-1})$, therefore these groups act on compact Lorentz manifolds of dimension $k$ and $2k$ respectively. Studying orbits of Zimmer points in the general situation, we will prove that it is not possible to find an action of these groups on compact Lorentz manifolds with smaller dimension.

Before going on with the proof of Theorem \ref{thm:main}, we take a short detour and describe general properties of Zimmer points of a conformal action.

\subsubsection{General geometric informations on Zimmer points}
\label{sss:general_zimmer_conformal}

Let $(M^n,[g])$, $n \geq 3$, be a manifold endowed with a conformal class of metrics of signature $(p,q)$. We note $G=\PO(p+1,q+1)$ and $P<G$ the stabilizer of a fixed point $x_0 \in \Ein^{p,q}$. Let $(M,\mathcal{C})$ be the corresponding normalized Cartan geometry modeled on $\Ein^{p,q} = G/P$, with Cartan bundle $\pi : \hat{M} \rightarrow M$ and Cartan connection $\omega \in \Omega^1(\hat{M},\g)$. Recall that a Lie group $H$ acts conformally on $(M,[g])$ if and only if it acts by automorphisms of $(M,\mathcal{C})$. The Lie algebra $\h$ is seen as a Lie subalgebra $\h \subset \Kill(M,\mathcal{C})$ of Killing vector fields. We still note $\iota : \hat{M} \rightarrow \Mon(\h,\g)$ the map defined in (\ref{align:iota}), Section \ref{ss:embed}. If $x \in M$, we note $\h_x = \{X \in \h \ | \ X_x = 0\}$ the Lie algebra of the stabilizer of $x$ and $\mathcal{O}_x := H.x$ the $H$-orbit of $x$.

\begin{lem}
\label{lem:zimmer_stabilizer}
If $x \in M$ is a Zimmer point for some subgroup $S<H$, then $\Ad_{\h}(S) \h_x = \h_x$.
\end{lem}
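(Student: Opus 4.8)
The statement to prove is Lemma~\ref{lem:zimmer_stabilizer}: if $x$ is a Zimmer point for $S<H$, then $\Ad_{\h}(S)\h_x=\h_x$. Recall that $x$ being a Zimmer point means that, picking any $\hx\in\pi^{-1}(x)$, we have $S.\iota(\hx)\subset\iota(\hx).P$ for the $(H\times P)$-action on $\Mon(\h,\g)$; concretely, for every $s\in S$ there is $p=p(s)\in P$ with $\iota_{\hx}\circ\Ad_{\h}(s^{-1})=\Ad_{\g}(p)\circ\iota_{\hx}$. The first thing I would do is identify $\h_x$ intrinsically in terms of $\iota_{\hx}$: since $\hat X_{\hx}$ is the lift of the Killing field $X$, and $\omega_{\hx}$ restricted to the vertical space is the canonical identification with $\p$, the vanishing $X_x=0$ is equivalent to $\hat X_{\hx}$ being vertical at $\hx$, i.e.\ to $\iota_{\hx}(X)=\omega_{\hx}(\hat X_{\hx})\in\p$. (Here I use that a point of $M$ has $X_x=0$ iff $\hat X$ is tangent to the fiber at one — hence every — point of that fiber, which follows from $\pi_*\hat X=X$ and $P$-invariance of $\hat X$.) Thus $\h_x=\iota_{\hx}^{-1}(\iota_{\hx}(\h)\cap\p)$.

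Next I would run the straightforward computation. Let $X\in\h_x$, so $\iota_{\hx}(X)\in\p$, and let $s\in S$ with associated $p=p(s)\in P$. Then
\begin{equation*}
\iota_{\hx}(\Ad_{\h}(s)X)=\bigl(\iota_{\hx}\circ\Ad_{\h}(s^{-1})\bigr)^{-1}\!\bigl(\iota_{\hx}(X)\bigr)\ \text{re-read as}\ \iota_{\hx}\circ\Ad_{\h}(s)=\Ad_{\g}(p^{-1})\circ\iota_{\hx},
\end{equation*}
so $\iota_{\hx}(\Ad_{\h}(s)X)=\Ad_{\g}(p^{-1})\,\iota_{\hx}(X)$. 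Since $\p$ is the Lie algebra of $P$, it is $\Ad_{\g}(P)$-invariant, hence $\Ad_{\g}(p^{-1})\iota_{\hx}(X)\in\p$; moreover it obviously lies in $\iota_{\hx}(\h)$ because $\Ad_{\h}(s)X\in\h$. Therefore $\Ad_{\h}(s)X\in\iota_{\hx}^{-1}(\iota_{\hx}(\h)\cap\p)=\h_x$. This shows $\Ad_{\h}(S)\h_x\subset\h_x$; applying the same to $s^{-1}$ (which is again in $S$, with associated element $p(s)^{-1}$) gives the reverse inclusion, hence equality.

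There is essentially no serious obstacle here — the lemma is a formal consequence of the definitions once one has the dictionary $X_x=0\iff\iota_{\hx}(X)\in\p$. The one point that deserves a careful sentence rather than a hand-wave is exactly that dictionary: it rests on axioms (1) and (2) of a Cartan geometry (so that the vertical subspace of $T_{\hx}\hat M$ is precisely $\omega_{\hx}^{-1}(\p)$) together with the defining properties of the lift $\hat X$ (namely $\pi_*\hat X=X$ and invariance under the right $P$-action, the latter ensuring the characterization is independent of the choice of $\hx$ in the fiber). After that, the computation uses only that $\p$ is an $\Ad(P)$-stable subspace of $\g$, which is automatic. I would also remark, for use later, that the same argument localizes: for each individual $s\in S$ one gets $\Ad_{\h}(s)\h_x=\h_x$, and the conjugating element can be taken to be $p(s)\in P^{\hx}$ in the notation of Proposition~\ref{prop:morphisme_surjectif}, so $\h_x$ is in fact invariant under the group $\Ad_{\h}(S)$ realized via $\rho_{\hx}(P^{\hx})$.
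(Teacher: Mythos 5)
Your proof is correct and follows essentially the same route as the paper: characterize $\h_x$ as $\iota_{\hx}^{-1}(\p)\cap\h$ via the verticality of $\hat X_{\hx}$, then use the Zimmer condition and the $\Ad(P)$-invariance of $\p$ to conclude (the paper only writes out the inclusion $\Ad_{\h}(S)\h_x\subset\h_x$; your remark about applying the argument to $s^{-1}$ is the right way to upgrade to equality). The only blemish is the first equality in your displayed formula, which as written maps into $\h$ rather than $\g$ and should simply be deleted in favor of the correct relation $\iota_{\hx}\circ\Ad_{\h}(s)=\Ad_{\g}(p^{-1})\circ\iota_{\hx}$ that follows it.
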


\begin{rem}
This lemma is valid for every Cartan geometry.
\end{rem}

\begin{proof}
Since an automorphism $f \in \Aut(M,\mathcal{C})$ is covered by a bundle morphism $\hat{f}$, it fixes a point $x \in M$ if and only if $\hat{f}$ preserves the fiber $\pi^{-1}(x)$ over $x$. In particular, a Killing vector field $X$ vanishes at a point $x$ if and only if its lift $\hat{X}$ is tangent to the fiber $\pi^{-1}(x)$, or equivalently if $\forall \hx \in \pi^{-1}(x)$, $\omega_{\hx}(\hat{X}_{\hx}) \in \p$. Therefore, any $X \in \h$ vanishes at some point $x$ if and only if for some (equivalently for all) $\hx \in \pi^{-1}(x)$, we have $\iota(\hx)(X) \in \p$. Since $x$ is a Zimmer point for $S$, for all $s \in S$, there exists $p \in P$ such that $\forall X \in \h$, $\iota(\hx)(\Ad(s)X) = \Ad(p) \iota(\hx) (X)$. Therefore, $X \in \h_x \Rightarrow \iota(\hx)(X) \in \p \Rightarrow \Ad(p) \iota(\hx) (X) = \iota(\hx)(\Ad(s)X) \in \p \Rightarrow \Ad(s)X \in \h_x$.
\end{proof}

In what follows, we identify $\h / \h_x \simeq T_x \mathcal{O}_x \subset T_xM$. Thus, $\h / \h_x$ inherits a conformal class $[q_x]$ of quadratic forms obtained by restricting the conformal class $[g_x]$ of $T_xM$. Since it preserves $\h_x$, the group $\Ad_{\h}(S)$ naturally acts on $\h / \h_x$, we simply note $\bar{\Ad}(S)$ this action on the quotient.

\begin{prop}
\label{prop:zimmer_conformal}
Let $n \geq 3$ and $(M^n,[g])$ be a compact manifold endowed with a conformal class $[g]$ of non-degenerate metrics. Let $H < \Conf(M,[g])$ be a Lie subgroup, $S < H$ a closed subgroup and $x$ a Zimmer point for $S$. Then, we have $\bar{\Ad}(S) \subset \Conf(\h / \h_x , [q_x])$.
\end{prop}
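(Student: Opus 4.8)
The plan is to show that the natural linear action $\bar{\Ad}(s)$ of an element $s \in S$ on the quotient $\h / \h_x \simeq T_x\mathcal{O}_x$ coincides, up to a conformal factor, with the differential of the conformal diffeomorphism $s : M \to M$ at the fixed point $x$. Once this is established, the conclusion is immediate: since $s$ is conformal for $[g]$ and fixes $x$, its differential $d_x s$ is a conformal linear map of $(T_xM,[g_x])$; it preserves the tangent subspace $T_x\mathcal{O}_x$ (because $s$ preserves the orbit $\mathcal{O}_x$); hence its restriction to $T_x\mathcal{O}_x$ is conformal for the restricted conformal class $[q_x]$. Transporting this through the identification $\h/\h_x \simeq T_x\mathcal{O}_x$ yields $\bar{\Ad}(s) \in \Conf(\h/\h_x,[q_x])$, as desired.

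So the real content is the identification of $\bar{\Ad}(s)$ on $\h/\h_x$ with $d_x s$ on $T_x\mathcal{O}_x$ (up to scale). First I would recall that, since $x$ is a Zimmer point for $S$, Lemma \ref{lem:zimmer_stabilizer} gives $\Ad_{\h}(S)\h_x = \h_x$, so $\bar{\Ad}(S)$ on $\h/\h_x$ is well defined. Now fix $s \in S$ and pick $\hx \in \pi^{-1}(x)$; since $x$ is a Zimmer point, there is $p \in P$ with $\iota(\hx) \circ \Ad(s^{-1}) = \Ad(p) \circ \iota(\hx)$, equivalently $\iota(s^{-1}.\hx) = \iota(\hx.p^{-1})$ by the $(H\times P)$-equivariance of $\iota$; since $\iota$ is injective on fibers (it is, on all of $\hat M$), this forces $s^{-1}.\hx = \hx.p^{-1}$, i.e. $s.\hx = \hx.p$ (here one must be slightly careful: injectivity of $\iota$ as a map $\hat M \to \Mon(\h,\g)$ is not claimed in general, so instead I would argue directly that $\hat s$ maps $\pi^{-1}(x)$ to itself, hence $\hat s(\hx) = \hx.p$ for a \emph{unique} $p \in P$, and then the Zimmer relation pins down this $p$ via $\Ad(p)\circ\iota(\hx) = \iota(\hx)\circ\Ad(s^{-1})$). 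The key point is that $\hat s(\hx) = \hx.p$ with the \emph{same} $p$ appearing in the Zimmer relation.

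Next I would run the identification $\h/\h_x \simeq T_x\mathcal{O}_x$ through the Cartan-connection description of $TM$ from Section \ref{sss:cartan_easy}. For $X \in \h$, the value $X_x \in T_xM$ corresponds under $\varphi_{\hx} : T_xM \to \g/\p$ to $\omega_{\hx}(\hat X_{\hx}) + \p = \iota(\hx)(X) + \p$; thus the composite $\h \to \g/\p$, $X \mapsto \iota(\hx)(X) + \p$, has kernel exactly $\h_x$ (a Killing field vanishes at $x$ iff its lift is vertical at $\hx$, iff $\iota(\hx)(X) \in \p$) and induces the identification $\h/\h_x \xrightarrow{\sim} \iota(\hx)(\h)/(\iota(\hx)(\h)\cap\p) \hookrightarrow \g/\p \xrightarrow{\varphi_{\hx}^{-1}} T_x\mathcal{O}_x$. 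Now compare two ways of moving by $s$: on one hand $\bar\Ad(s)$ acts on $\h/\h_x$ by the class of $\Ad_{\h}(s)$; on the other hand $d_x s$ acts on $T_x\mathcal O_x \subset T_xM$, and the naturality of the $TM \simeq \hat M\times_P \g/\p$ identification together with $\hat s(\hx)=\hx.p$ gives $\varphi_{\hx} \circ d_xs = \bar\Ad(p)\circ \varphi_{\hx}$ (this is just the equivariance $\varphi_{\hx.p}=\bar\Ad(p^{-1})\varphi_{\hx}$ applied to the lift of an automorphism). Combining $\varphi_{\hx}(X_x) = \iota(\hx)(X)+\p$ with the Zimmer relation $\Ad(p)\circ\iota(\hx) = \iota(\hx)\circ\Ad(s^{-1})$ — careful here with $s$ versus $s^{-1}$, which just amounts to replacing $s$ by $s^{-1}$ throughout — yields exactly that, under $\h/\h_x \simeq T_x\mathcal O_x$, the map $\bar\Ad(s)$ corresponds to $d_xs|_{T_x\mathcal O_x}$. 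I expect this bookkeeping — keeping track of $s$ vs.\ $s^{-1}$, of left vs.\ right $P$-action, and of the fact that the Zimmer relation provides precisely the $p$ realizing $\hat s(\hx)=\hx.p$ — to be the only delicate point; the conformal conclusion then drops out immediately because $d_xs$ is conformal on all of $T_xM$ and preserves the subspace $T_x\mathcal O_x$.
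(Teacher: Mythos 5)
There is a genuine gap, and it sits at the very foundation of your plan: you assume that an element $s \in S$ \emph{fixes} the Zimmer point $x$, so that $d_xs$ is an endomorphism of $T_xM$ and $\hat{s}$ maps the fiber $\pi^{-1}(x)$ to itself. Neither assertion follows from the hypothesis. Being a Zimmer point for $S$ means only that the $S$-orbit of $\iota(\hx)$ inside $\Mon(\h,\g)$ is contained in its $P$-orbit; it does \emph{not} mean that $S$ fixes $x$. The relation $\iota(s.\hx) = \iota(\hx)\circ\Ad(s^{-1}) = \Ad(p)\circ\iota(\hx) = \iota(\hx.p^{-1})$ is an equality of two elements of $\Mon(\h,\g)$ attached to points lying in \emph{different} fibers ($s.\hx \in \pi^{-1}(s\cdot x)$, $\hx.p^{-1}\in\pi^{-1}(x)$), and one cannot conclude $s.\hx = \hx.p$. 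In the paper's own applications the point is not fixed: for instance in Proposition \ref{prop:orbites_o1k} one applies the result to $u_t = \bar{\Ad}(\e^{tA})$ with $A \in \a$, and in case (1) the stabilizer algebra is $\h_x = \m\oplus\h_{\alpha}$, which does not contain $\a$ --- so the flow $\e^{tA}$ genuinely moves $x$. Your fallback argument (``$\hat{s}$ maps $\pi^{-1}(x)$ to itself'') therefore fails, and with it the identification of $\bar{\Ad}(s)$ with $d_xs|_{T_x\mathcal{O}_x}$.

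Nor can the strategy be repaired by comparing $d_xs : T_x\mathcal{O}_x \to T_{s\cdot x}\mathcal{O}_x$ with the identifications at $x$ and at $s\cdot x$: that only shows $\bar{\Ad}(s)^* q_{s\cdot x} \in [q_x]$, and the missing ingredient is precisely that $q_{s\cdot x}$ and $q_x$ lie in the same conformal class of forms on $\h/\h_x$ --- which is exactly what has to be proved. The argument that actually works stays entirely at the point $x$ and on the fiber $\pi^{-1}(x)$: since $\iota(\hx)(\h_x)\subset\p$, the map $\iota(\hx)$ descends to a linear injection $\psi_{\hx} : \h/\h_x \to \g/\p$ which is the restriction of the conformal identification $\varphi_{\hx} : (T_xM,[g_x])\to(\g/\p,[Q])$ to $T_x\mathcal{O}_x$, hence is conformal; the Zimmer relation $\iota_{\hx}\circ\Ad(s) = \Ad(p)\circ\iota_{\hx}$ then intertwines $\bar{\Ad}(s)$ on $\h/\h_x$ with $\bar{\Ad}(p)$ on $\g/\p$, and the latter is conformal for $[Q]$ by construction of the conformal structure from the Cartan geometry. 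The part of your write-up computing $\varphi_{\hx}(X_x) = \iota(\hx)(X)+\p$ is correct and is essentially this $\psi_{\hx}$; the error is only in trying to route the conformality through a differential at a point that $s$ does not fix.
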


\begin{proof}
We explained in Section \ref{sss:cartan_easy} that there exists a conformal class $[Q]$ on $\g / \p$ of signature $(p,q)$, invariant under the (quotiented) adjoint action $\bar{\Ad}(P)$, and a family of linear conformal identifications $\varphi_{\hx} : (T_x M,[g_x]) \rightarrow (\g / \p,[Q])$ that satisfy a natural equivariant relation.

As we noted before, for all $\hx \in \pi^{-1}(x)$, $\iota(\hx)$ sends $\h_x$ into $\p$. Therefore, it defines a map $\psi_{\hx} : \h / \h_x \rightarrow \g / \p$ such that the following diagram is commutative
\begin{equation*}
\xymatrix{
\llap{$\iota_{\hx} :$ } \h \ar[r] \ar[d] & \g \ar[d] \\
\llap{$\psi_{\hx} :$ }\h / \h_x \ar[r] & \g / \p
}
\end{equation*}
and it comes from the definitions that $\psi_{\hx}$ coincides with the restriction of $\varphi_{\hx}$ to $T_x \mathcal{O}_x \simeq \h / \h_x$. Thus, since $\varphi_{\hx}$ is conformal, $\psi_{\hx} : (\h/\h_x , [q_x]) \rightarrow (\g / \p, [Q])$ is a conformal linear injective map.

Fix $s \in S$. There exists $p \in P$ such that $\forall X \in \h$, $\iota_{\hx}(\Ad(s) X) = \Ad(p) \iota_{\hx}(X)$, and then $\psi_{\hx}(\bar{\Ad}(s)\bar{X}) = \bar{\Ad}(p) \psi_{\hx}(\bar{X})$, where the bars mean that we are in the quotient $\h / \h_x$ or $\g / \p$. We now compute
\begin{align*}
\forall \bar{X} \in \h / \h_x, \ q_x(\bar{\Ad}(s) \bar{X}) & = \lambda Q(\psi_{\hx}(\bar{\Ad}(s) \bar{X})) \text{ for some } \lambda > 0 \text{ since } \psi_{\hx} \text{ is conformal} \\
& = \lambda Q (\bar{\Ad}(p) \psi_{\hx}(\bar{X})) \\
& = \lambda \lambda' Q(\psi_{\hx}(\bar{X})) \text{ for some } \lambda' > 0 \text{ since } \bar{\Ad}(p) \in \Conf(\g/\p , [Q]) \\
& = \lambda' q_x(\bar{X}),
\end{align*}
proving that $\bar{\Ad}(s) \in \Conf(\h / \h_x, [q_x])$.
\end{proof}

We come back to the proof of Theorem \ref{thm:main} and until the end of this section, we assume that a connected Lie group $H$ locally isomorphic to $\SO(1,k)$ or $\SU(1,k)$, $k \geq 2$, acts conformally on a compact Lorentz manifold $(M^n,g)$, $n \geq 3$. 

\begin{dfn}
Let $V$ be a finite-dimensional vector space, endowed with a Lorentz quadratic form $q$. If $V' \subset V$ is a subspace, the restriction $q' := q|_{V'}$ is either non-degenerated, with Riemannian or Lorentz signature, or degenerated and positive, with a $1$-dimensional kernel. Such a quadratic form $q'$ will be said \textit{sub-Lorentzian}.
\end{dfn}

Remark that if $V$ is endowed with a sub-Lorentzian quadratic form $q$, then a subspace $V' \subset V$ totally isotropic with respect to $q$ is at most $1$-dimensional.

\subsubsection{Conformal actions of $\o(1,k)$}
\label{sss:o1k}

We treat here the case where $H$ is a connected Lie group locally isomorphic to $\SO(1,k)$, $k \geq 2$. We start collecting some algebraic materials on $\SO(1,k)$ that we will use.

\vspace*{0.2cm}

We have seen that the Lie algebra $\o(1,k)$ admits the root-space decomposition $\o(1,k) = \h_{-\alpha} \oplus \a \oplus \m \oplus \h_{\alpha}$, where $\m \simeq \o(k-1)$. Set $\h_0 := \a \oplus \m$ the centralizer of the Cartan space $\a$.

\begin{lem}~
\label{lem:trick_o1k}
\begin{enumerate}
\item The adjoint action $\ad(\h_0)$ on $\h_{\pm \alpha}$ is irreducible.
\item Let $\h' \subset \o(1,k)$ be a Lie subalgebra such that $[\h_{\alpha},\h'] \subset \h'$. If $\h' \cap \h_{-\alpha} \neq 0$, then $\h' = \o(1,k)$.
\end{enumerate}
\end{lem}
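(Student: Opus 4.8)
The plan is to compute everything inside the matrix model of Section~\ref{sss:preliminaries_orthogonal}. Write $n=k+1$, so that $\o(1,k)$ consists of $(k+1)\times(k+1)$ matrices with a $(k-1)\times(k-1)$ central block, on which $\m\simeq\o(k-1)$ acts; put $\a=\R a_0$ with $a_0=\operatorname{diag}(1,0,\ldots,0,-1)$, and for $u\in\R^{k-1}$ let $X_u\in\h_\alpha$ and $Y_u\in\h_{-\alpha}$ be the matrices with upper-right, resp.\ lower-left, block equal to $u$. The whole argument rests on the single bracket identity
\[
[X_u,Y_v]\;=\;-\langle u,v\rangle\,a_0\;+\;(v\wedge u),\qquad v\wedge u:={}^{t}v\,u-{}^{t}u\,v\in\m,
\]
together with $[a_0,X_u]=X_u$, $[a_0,Y_u]=-Y_u$, the fact that $\m$ acts on $\h_{\pm\alpha}$ by the standard representation of $\o(k-1)$ on $\R^{k-1}$, and the grading relations $[\h_\alpha,\h_\alpha]=[\h_{-\alpha},\h_{-\alpha}]=0$, $[\h_0,\h_{\pm\alpha}]\subset\h_{\pm\alpha}$, $[\h_\alpha,\h_{-\alpha}]\subset\h_0$ — all read off directly from the matrices.

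For (1): since $a_0$ acts on $\h_{\pm\alpha}$ by a nonzero scalar, an $\ad(\h_0)$-invariant subspace of $\h_{\pm\alpha}$ is exactly an $\ad(\m)$-invariant one, so (1) amounts to irreducibility \emph{over $\R$} of the standard representation of $\o(k-1)$ on $\R^{k-1}$. This holds for every $k\geq 2$: it is a line when $k-1=1$; for $k-1=2$ the group of plane rotations fixes no real line; and for $k-1\geq 3$ the action is transitive on directions, hence absolutely irreducible.

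For (2): fix $0\neq Y_{v_0}\in\h'\cap\h_{-\alpha}$, so that $v_0\neq0$ and $\|v_0\|^2>0$ (the relevant form on $\R^{k-1}$ being positive definite). I would climb the grading in four steps, using at each stage that $X_u\in\h_\alpha$ and $z\in\h'$ force $[X_u,z]\in\h'$. (i) Taking $u=v=v_0$ in the basic identity and using $v_0\wedge v_0=0$ gives $[X_{v_0},Y_{v_0}]=-\|v_0\|^2a_0\in\h'$, hence $a_0\in\h'$. (ii) Then $[\h_\alpha,a_0]\subset\h'$ and $[X_u,a_0]=-X_u$ yield $\h_\alpha\subset\h'$. (iii) For each $u$, both $[X_u,Y_{v_0}]$ and $Y_{v_0}$ lie in $\h'$, hence so does $[Y_{v_0},[X_u,Y_{v_0}]]$; a short computation identifies it with $Y_{\|v_0\|^2u-2\langle u,v_0\rangle v_0}$, and combining with $Y_{v_0}\in\h'$ (and $\|v_0\|^2\neq0$) gives $Y_u\in\h'$ for all $u$, i.e.\ $\h_{-\alpha}\subset\h'$. (iv) Finally $\h'\supset[\h_\alpha,\h_{-\alpha}]$, and the right-hand side of the basic identity spans all of $\a\oplus\m=\h_0$ as $u,v$ vary over $\R^{k-1}$ (take $u=v$ for the $\a$-summand; letting $u,v$ run over the standard basis, the $v\wedge u$ give a basis of $\o(k-1)$). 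Therefore $\h'\supset\h_{-\alpha}\oplus\h_0\oplus\h_\alpha=\o(1,k)$.

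I do not expect a real obstacle here: once the three-term grading and the basic bracket identity are written down, each step above is a one-line matrix computation, and the conceptual engine of (2) is simply that $\h_\alpha$ is abelian and normalizes $\h'$, which lets one reach $\a$ from a single vector of $\h_{-\alpha}$, then $\h_\alpha$, then all of $\h_{-\alpha}$, then $\h_0=[\h_\alpha,\h_{-\alpha}]$. The only points deserving a word of care are the degenerate rank-one case $k=2$ (where $\m=0$ and $\h_{\pm\alpha}$ are lines, but steps (i)--(iv) still apply verbatim, (iv) reducing to $[\h_\alpha,\h_{-\alpha}]=\a$), and, in (1), remembering that "irreducible" is meant over $\R$ so that $\o(2)$ acting on $\R^2$ is admissible.
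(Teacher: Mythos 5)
Your proof is correct and follows essentially the same route as the paper: part (1) is the same identification of the $\h_0$-action with the standard action of $\R\oplus\o(k-1)$ on $\R^{k-1}$, and your key identity $[Y_{v_0},[X_u,Y_{v_0}]]=Y_{\|v_0\|^2u-2\langle u,v_0\rangle v_0}$ is exactly the paper's $[[Y,\theta(X)],Y]=B_\theta(X,Y)Y-\tfrac12 B_\theta(Y,Y)X$ written in coordinates. The only difference is the order in which you climb the grading ($\a$, then $\h_\alpha$, then $\h_{-\alpha}$, then $\h_0$, versus the paper's $\h_{-\alpha}$, then $\h_0$, then $\h_\alpha$), which is immaterial.
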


\begin{proof}
(1) An elementary computation using the linear representation of $\o(1,k)$ given in Section \ref{sss:preliminaries_orthogonal} shows that the adjoint action of $\h_0$ on $\h_{\pm \alpha}$ is conjugated to the standard linear action of $\R \oplus \o(k-1)$ on $\R^{k-1}$ ($\R$ acting by homotheties).

(2) We note $\theta : X \mapsto - ^{t} \! X$ a Cartan involution (with respect to our root-space decomposition). Let $Y \in \h' \cap \h_{-\alpha}$ non-zero and $X \in \h_{-\alpha}$. Since $\theta(X) \in \h_{\alpha}$, we have $[\theta(X),Y] \in \h' \cap \h_0$. An elementary matrix computation using the linear representations given in Section \ref{sss:preliminaries_orthogonal} shows $[[Y,\theta(X)],Y] = B_{\theta}(X,Y) Y - \frac{1}{2} B_{\theta}(Y,Y) X$, where $B_{\theta}(X,Y):= \Tr(X\theta(Y))$ (it is a negative definite quadratic form on $\h$ since $\theta$ is a Cartan involution). Therefore, since $Y \neq 0$ and $[[Y,\theta(X)],Y] \in \h'$, we get $X \in \h'$, proving $\h_{-\alpha} \subset \h'$. Using the matrix representation of $\o(1,k)$, we verify that $\h_0 = [\h_{-\alpha},\h_{\alpha}]$ and $\h_{\alpha} = [\h_0,\h_{\alpha}]$. We then have $\h_0 \subset \h'$, and $\h_{\alpha} \subset \h'$.
\end{proof}

Recall that we assume that $H$ acts faithfully and conformally on $(M,[g])$. Let $S < H$ be the connected Lie subgroup whose Lie algebra is $\s = \a \oplus \h_{\alpha}$. It will not be difficult to see that $S$ is discompact and amenable, ensuring the existence of a Zimmer point for $S$. We first establish the following proposition on the possible orbits of such a point.

\begin{prop}
\label{prop:orbites_o1k}
Let $x \in M$ be a Zimmer point for $S$ and $H_x < H$ be the stabilizer of $x$. If $k \geq 4$, then the Lie algebra $\h_x$ is one of the following Lie subagebras of $\h$:
\begin{enumerate}
\item $\m \oplus \h_{\alpha}$ ;
\item $\a \oplus \m \oplus \h_{\alpha}$ ;
\item $\h$.
\end{enumerate}
When $\h_x = \a \oplus \m \oplus \h_{\alpha}$, the orbit $H.x$ is conformal to the standard Riemannian round sphere $\S^{k-1}$.
\end{prop}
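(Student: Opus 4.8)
The plan is to exploit the two structural results just established---Lemma~\ref{lem:zimmer_stabilizer} (the stabilizer algebra $\h_x$ is $\Ad_\h(S)$-invariant) and Proposition~\ref{prop:zimmer_conformal} (the quotient action $\bar\Ad(S)$ acts conformally on $(\h/\h_x,[q_x])$, where $q_x$ is sub-Lorentzian since $M$ is Lorentzian)---to pin down which subalgebras $\h_x\subset\o(1,k)$ are possible. First I would note that $S$ is discompact (its adjoint action has Zariski closure containing the unipotent $\exp(\ad\h_\alpha)$ and a split torus $\ad\a$, so no cocompact normal algebraic subgroup) and amenable (solvable), so by compactness of $M$ and Theorem~\ref{thm:embed} a Zimmer point $x$ for $S$ exists; fix such an $x$.

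The heart of the argument is to enumerate the $\s$-invariant subalgebras $\h_x$ of $\o(1,k)$ satisfying the extra constraint coming from Proposition~\ref{prop:zimmer_conformal}. Since $\h_x$ is a subalgebra (being the Lie algebra of a stabilizer) and $[\s,\h_x]\subset\h_x$ with $\s=\a\oplus\h_\alpha$, part (2) of Lemma~\ref{lem:trick_o1k} already forces a dichotomy: either $\h_x\cap\h_{-\alpha}\neq 0$, in which case $\h_x=\h$ (case (3), $x$ is a fixed point), or $\h_x\cap\h_{-\alpha}=0$. In the latter case write the $\a$-weight decomposition of $\h_x$: it is contained in $\a\oplus\m\oplus\h_\alpha$. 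I would then argue, using that $\h_x$ is $\ad(\a)$-stable, that $\h_x\cap\h_\alpha$ and $\h_x\cap(\a\oplus\m)$ are each well-defined pieces; because $\h_x$ is also $\ad(\h_\alpha)$-stable and $[\h_\alpha,\h_\alpha]=0$ while $[\h_0,\h_\alpha]=\h_\alpha$, if $\h_x$ meets $\a\oplus\m$ nontrivially in an element with nonzero $\a$-component it must contain all of $\h_\alpha$, and more generally part (1) of Lemma~\ref{lem:trick_o1k} (irreducibility of $\ad(\h_0)$ on $\h_\alpha$) shows $\h_x\cap\h_\alpha$ is $0$ or all of $\h_\alpha$. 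The case $\h_x\cap\h_\alpha=0$ would make $\h/\h_x$ at least $(k-1)$-dimensional containing a copy of $\h_{-\alpha}$; here I would invoke the conformal constraint---$\bar\Ad(S)$ acts conformally on the sub-Lorentzian $[q_x]$, but the unipotent $\exp(\ad\h_\alpha)$ acting on this large space with a big totally isotropic image contradicts the remark that a sub-Lorentzian space has totally isotropic subspaces of dimension $\le 1$ (this is where $k\geq 4$ is used: the isotropic directions produced are too numerous). So $\h_\alpha\subset\h_x$, leaving $\h_x=\m\oplus\h_\alpha$ (case (1)), $\h_x=\a\oplus\m\oplus\h_\alpha$ (case (2)), or an intermediate subalgebra between $\m$ and $\o(k-1)$ which, being a proper $\ad(\a)$-trivial ideal-like piece, must be all of $\m$ by simplicity of $\o(k-1)$ for $k\geq 4$ (the only exclusion to handle is small $k$, hence the hypothesis).

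For the final assertion, when $\h_x=\a\oplus\m\oplus\h_\alpha=\h_0\oplus\h_\alpha$, the orbit is $H/H_x$ with $T_x\mathcal O_x\simeq\h/\h_x\simeq\h_{-\alpha}\simeq\R^{k-1}$, and the isotropy $\bar\Ad(H_x)$ acting on this tangent space is the adjoint action of $\h_0\oplus\h_\alpha$ on $\h_{-\alpha}$, which by part (1) of Lemma~\ref{lem:trick_o1k} is conjugate to the standard action of $\R\oplus\o(k-1)$ on $\R^{k-1}$ by homotheties and rotations---i.e.\ the full conformal-linear group of the round metric. Since $\o(1,k)/(\h_0\oplus\h_\alpha)$ is exactly the realization of the sphere $\S^{k-1}$ as a homogeneous space of $\SO(1,k)$ (the boundary at infinity of real hyperbolic space), and the induced conformal structure $[q_x]$ is the $\bar\Ad(H_x)$-invariant one, which is the round conformal class, I conclude $H.x$ is conformal to the standard round $\S^{k-1}$. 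I expect the main obstacle to be the careful bookkeeping in the case $\h_x\cap\h_\alpha=0$: one must extract enough isotropic vectors in $\h/\h_x$ from the unipotent action to reach a contradiction with the sub-Lorentzian condition, and this is precisely the step that forces $k\geq 4$ and must be done with some care about which weight spaces survive in the quotient.
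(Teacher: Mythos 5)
Your overall strategy is the paper's: combine Lemma \ref{lem:zimmer_stabilizer} ($\Ad_{\h}(S)$-invariance of $\h_x$), Proposition \ref{prop:zimmer_conformal} (conformality of $\bar{\Ad}(S)$ on the sub-Lorentzian $(\h/\h_x,[q_x])$) and Lemma \ref{lem:trick_o1k} to enumerate the possible $\h_x$. The dichotomy via Lemma \ref{lem:trick_o1k}(2) ($\h_x=\h$ or $\h_x\cap\h_{-\alpha}=0$, hence $\h_x\subset\h_0\oplus\h_\alpha$) and the final identification of the orbit with the round $\S^{k-1}$ are essentially as in the paper (the paper additionally checks that $H/H_x\simeq\S^{k-1}$ as a manifold for both $H=\PSO(1,k)$ and $H=\Spin(1,k)$, and deduces positivity of $q_x$ from its invariance under the $\SO(k-1)$-action coming from $\m$; these are details you could fill in).

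However, there is a genuine gap at the central step, and it is exactly the one you flag yourself. Your enumeration tacitly assumes that once $\h_x\cap\h_{-\alpha}=0$ the only candidates are $\m\oplus\h_\alpha$ and $\h_0\oplus\h_\alpha$; but nothing you say rules out, e.g., $\h_x=\h_\alpha$, or $\h_x=\m'\oplus\h_\alpha$ with $\m'\subsetneq\m$, or $\h_x\cap\h_\alpha$ a proper nonzero subspace (your appeal to irreducibility of $\ad(\h_0)$ on $\h_\alpha$ only applies once you already know $\h_0\subset\h_x$). The missing input is a dimension bound on $\pi_x(\h_0\oplus\h_\alpha)$, and the mechanism that produces it is not the unipotent part of $S$ but the hyperbolic flow: take $A\in\a$ with $\alpha(A)=1$ and $u_t=\bar{\Ad}(\e^{tA})\in\Conf(\h/\h_x,[q_x])$, so $u_t^*q_x=\e^{\lambda t}q_x$ for some $\lambda$. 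Since $u_t$ acts as $\e^{-t}\id$ on $\pi_x(\h_{-\alpha})$, which has dimension $k-1\geq 2$ and hence cannot be totally isotropic for a sub-Lorentzian form, one gets $\lambda=-2$; but $u_t$ acts with weights $0$ and $+1$ on $\h_0$ and $\h_\alpha$, so $q_x$ must vanish identically on $\pi_x(\h_0\oplus\h_\alpha)$, forcing $\dim\pi_x(\h_0\oplus\h_\alpha)\leq 1$. From this single inequality the two cases drop out: if $\h_0\subset\h_x$ then $\h_x\cap\h_\alpha$ has dimension $\geq k-2>0$ and irreducibility gives $\h_\alpha\subset\h_x$ (case (2)); if not, then $\h_\alpha\subset\h_x$ and $\h_x\cap\h_0$ has codimension $1$ in $\h_0$, whence $\h_x\cap\h_0=\m$ because $\o(k-1)$ has no codimension-$1$ subalgebra. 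Note also that this last step, not the production of isotropic vectors, is where $k\geq 4$ is genuinely needed: for $k=3$ one has $\m\simeq\o(2)\simeq\R$, which does admit a codimension-$1$ subalgebra, so your attribution of the hypothesis $k\geq4$ to "too many isotropic directions" is misplaced.
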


\begin{proof}
We keep the same notations for the root-space decomposition. Let $A \in \a$ such that $\alpha(A) = 1$ and $u_t := \bar{\Ad}(\e^{tA}) \in \GL(\h / \h_x)$. If $[q_x]$ is the conformal class of subLorentzian metrics on $\h / \h_x$ that we considered in Proposition \ref{prop:zimmer_conformal}, we have $u_t \in \Conf(\h / \h_x , [q_x])$.

Assume $\h_x \neq \h$. Since $x$ is a Zimmer point for $S$, we have $[\s,\h_x] \subset \h_x$ (Lemma \ref{lem:zimmer_stabilizer}) and with Lemma \ref{lem:trick_o1k}, this implies $\h_x \cap \h_{-\alpha} = 0$. If we note $\pi_x : \h \rightarrow \h / \h_x$ the natural projection, $\dim \pi_x (\h_{-\alpha}) = k-1 \geq 3$ and the restriction of $q_x$ to $\pi_x(\h_{-\alpha})$ cannot vanish identically. Since $u_t$ coincides with $\e^{-t}\id$ on this latter space and is conformal for $q_x$, we must have $u_t^* q_x = \e^{-2t}q_x$.

Therefore, $q_x$ must vanish identically on the subspace $\pi_x(\h_0 \oplus \h_{\alpha}) \subset \h / \h_x$. Since $q_x$ is subLorentzian, this means that $\dim \pi_x(\h_0 \oplus \h_{\alpha}) \leq 1$. We then distinguish two cases.
\begin{itemize}
\item The first one is $\h_0 \subset \h_x$. Since $\pi_x(\h_{\alpha})$ is at most $1$-dimensional, $\dim \h_x \cap \h_{\alpha} \geq k-2 > 0$. The adjoint action of $\ad(\h_0)$ on $\h_{\alpha}$ being irreducible (Lemma \ref{lem:trick_o1k}), we then have $[\h_0,\h_x \cap \h_{\alpha}] = \h_{\alpha} \subset \h_x$. Finally, $\h_x = \h_0 \oplus \h_{\alpha}$ and we are in situation (2).
\item The second is when $\h_0 \nsubset \h_x$. We then have $\h_x = \h_{0,x} \oplus \h_{\alpha}$ where $\h_{0,x}$ is a codimension $1$ subalgebra of $\h_0 = \a \oplus \m$. Since $k \geq 4$, $\m \simeq \o(k-1)$ does not admit a codimension $1$ subalgebra, proving $\h_{0,x} = \m$ and we are in situation (1).
\end{itemize}
Now, assume that $\h_x = \a \oplus \m \oplus \h_{\alpha}$. Since $k > 2$, there are only two possibilities for $H$: it is either isomorphic to $\PSO(1,k)$ or $\Spin(1,k)$. In fact, $\Spin(1,k)$ is the universal cover of $\PSO(1,k)$, and the covering $\Spin(1,k) \rightarrow \PSO(1,k)$ is $2$-sheeted.

If $H =\PSO(1,k)$, it naturally acts on $\S^{k-1} \simeq \Ein^{0,k-1}$ (see Section \ref{sss:einstein_cartan}) and the stabilizer of $x_0 := [1:0:\cdots : 0]$ is the only closed subgroup of $\PSO(1,k)$ whose Lie algebra is $\h_x$. The homogeneous space $H / H_x$ is thus diffeomorphic to $\S^{k-1}$.

If $H = \Spin(1,k)$, the projection of the covering $p : \Spin(1,k) \rightarrow \PSO(1,k)$ sends $H_x$ to the stabilizer of $x_0 \in \S^{k-1}$ in $\PSO(1,k)$. Therefore, $p$ induces a local diffeomorphism $H/H_x \rightarrow \S^{k-1}$. This must be a covering, and since $k > 2$, a diffeomorphism.

Thus, in both cases the orbit $\mathcal{O}_x := H.x$ is a properly embedded submanifold of $M$, diffeomorphic to $\S^{k-1}$. At last, it is a standard property of homogeneous spaces that the isotropy representation $\rho_x : H_x \rightarrow \GL(T_x \mathcal{O}_x)$ is conjugated to the representation $\bar{\Ad} : H_x \rightarrow \GL(\h/\h_x)$ induced by the adjoint representation. Since $\rho_x(H_x)$ is conformal, we have in fact $\bar{\Ad} : H_x \rightarrow \Conf(\h / \h_x, [q_x])$. If we note $H'$ the closed subgroup of $H_x$ corresponding to $\m \simeq \o(k-1)$ ($H' = \PSO(k-1)$ or $\Spin(k-1)$), by compactness we have $\bar{\Ad}(H') \subset \Isom(\h/\h_x,q_x)$. By Lemma \ref{lem:trick_o1k}, the action of $H'$ on $\h / \h_x$ is conjugated to the linear action of $\SO(k-1)$ on $\R^{k-1}$. Since this action leaves $q_x$ invariant, $q_x$ must be Euclidian. Therefore, the Lorentz conformal class $[g]$ induces a conformal Riemannian structure on  $\mathcal{O}_x \simeq_{\text{diff}} \S^{k-1}$, which must be invariant under the standard action of $\PSO(1,k)$, and has to be the standard Riemannian structure on $\S^{k-1}$.
\end{proof}

\begin{cor}
\label{cor:majoration_o1k}
When a Lie group locally isomorphic to $\SO(1,k)$, $k \geq 2$, acts conformally on a compact Lorentz manifold $(M,g)$, with $\dim M \geq 3$, we have $k \leq \dim M$.
\end{cor}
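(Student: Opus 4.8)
The plan is to deduce the bound $k \leq \dim M$ directly from the trichotomy in Proposition \ref{prop:orbites_o1k}, after first disposing of the small cases $k = 2, 3$ separately and then treating $k \geq 4$ via a dimension count on the relevant orbit. First I would note that, by the local-isomorphism hypothesis and faithfulness, the subgroup $S < H$ with Lie algebra $\s = \a \oplus \h_\alpha$ is amenable (it is solvable) and discompact, because $\Ad_\h(S)$ has Zariski closure an $\R$-split torus times a unipotent group — concretely, $\a$ acts with real eigenvalues on $\h_\alpha$ and $\h_\alpha$ acts nilpotently — so this closure contains no proper cocompact algebraic normal subgroup. Since $M$ is compact, the $S$-action preserves a finite measure, and Theorem \ref{thm:embed} provides a Zimmer point $x \in M$ for $S$. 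This makes Proposition \ref{prop:orbites_o1k} applicable in the range $k \geq 4$.

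Next, for $k \geq 4$ I would examine the three possibilities for $\h_x$. In case (3), $\h_x = \h$, so $x$ is a global fixed point of (the connected group) $H$; but then the isotropy representation $\rho_x : H_x \to \GL(T_xM)$ is faithful (by rigidity of the Cartan geometry / conformal structure, a conformal transformation fixing a point and acting trivially on the $2$-jet is trivial — more simply, $H \to \Conf(M,[g])$ is faithful and $H$ connected fixing $x$ injects into $\GL(T_xM)$ up to $2$-jets, but here it suffices that $\h \hookrightarrow \co(T_xM) \ltimes (\R^n)^*$ via the isotropy and the next-order jet), which forces $\dim \h \leq \dim\bigl(\co(1,n-1) \ltimes \R^n\bigr)$; in fact for this step it is cleanest to observe that $\h_x = \h$ cannot happen for a semisimple $H$ without compact factor unless $\dim M$ is large, and we can instead just rule it out by noting $\co(1,n-1)$ has no subalgebra isomorphic to $\o(1,k)$ with $k \leq n$ acting the wrong way — but the truly economical route is: in cases (1) and (2) the orbit $H.x$ has positive dimension, and $\dim(H.x) = \dim \h - \dim \h_x$, which equals $k$ in case (1) (since $\dim \h_x = \dim \m + \dim \h_\alpha = \binom{k-1}{2} + (k-1)$ and $\dim \h = \binom{k+1}{2} = \binom{k-1}{2}+(k-1)+k$... wait, recompute: $\dim\o(1,k) = \binom{k+1}{2}$, $\dim\m = \binom{k-1}{2}$, $\dim\h_{\pm\alpha} = k-1$, $\dim\a = 1$, so $\dim\h = \binom{k-1}{2} + 2(k-1) + 1 = \binom{k-1}{2} + 2k - 1$, hence in case (1) $\dim(H.x) = \dim\h - (\binom{k-1}{2}+(k-1)) = k$ and in case (2) $\dim(H.x) = k-1$). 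Either way $H.x$ is an immersed submanifold of $M$, so $\dim(H.x) \leq \dim M = n$, giving $k \leq n$ in case (1) and $k-1 \leq n$ in case (2); and case (2) identifies $H.x \cong \S^{k-1}$, so it embeds in $M^n$ forcing again $k - 1 \leq n$, and in fact the stronger $k-1 \le n$ already suffices when combined with excluding case (3). To kill case (3) I would use that a conformal vector field vanishing at $x$ gives an element of $\co(1,n-1) \ltimes \R^{n*}$ (the $\leq 2$-jet), and $\h$ semisimple injects into the semisimple part $\co(1,n-1) \cong \R \oplus \o(1,n-1)$, so $\o(1,k) \hookrightarrow \o(1,n-1)$, whence $k \leq n-1 < n$; alternatively, note that if $x$ were a fixed point then $H$ would act on $M$ with a fixed point, and one checks $\dim M \geq \dim(H/H_x) + $ something — but the cleanest is the $\o(1,k)\hookrightarrow\o(1,n-1)$ argument, which is immediate because a subalgebra of $\o(1,n-1)$ containing a copy of $\o(1,k)$ forces $k\le n-1$ by rank/dimension considerations (indeed $\o(1,k)$ contains $\o(k)$ which must embed in the maximal compact $\o(n-1)$, giving $k \le n-1$).

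For the remaining small cases $k = 2$ and $k = 3$: when $k = 2$, $\o(1,2) \cong \sl(2,\R)$, and we claim $n \geq 3 \geq k$ trivially — so $k = 2 \leq n$ holds automatically since $n \geq 3$. When $k = 3$, we need $n \geq 3$, which again holds by hypothesis. So in fact $k \in \{2,3\}$ requires nothing. Therefore the only substantive content is the case $k \geq 4$, handled above, and assembling these gives $k \leq \dim M$ in all cases.

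The main obstacle I anticipate is the rigorous exclusion of case (3) (the global fixed point): one must argue that a semisimple Lie group without compact factor acting conformally on $(M^n,g)$ with $\dim M \geq 3$ cannot fix a point unless $\dim M$ is large enough, and the correct tool is the faithfulness of the isotropy-jet representation, i.e. $\h$ embeds into the Lie algebra $\co(1,n-1) \ltimes \R^{n*}$ of the stabilizer $P$ of a point in $\Ein^{1,n-1}$ — whose reductive Levi part is $\R \oplus \o(1,n-1)$. Then $\o(1,k) \hookrightarrow \o(1,n-1)$ up to compact factors, and comparing maximal compact subalgebras $\o(k) \hookrightarrow \o(n-1)$ yields $k \leq n-1$, comfortably inside the claimed bound. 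The positive-dimensional cases (1) and (2) are then just the elementary observation that an orbit cannot have dimension exceeding $\dim M$, which is the heart of why the corollary follows so quickly from the proposition.
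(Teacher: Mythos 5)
Your setup (discompactness and amenability of $S$, existence of a Zimmer point, triviality of the cases $k=2,3$, and the dimension count $\dim H.x=k\leq n$ in case (1)) matches the paper. Your treatment of case (3) --- embedding $\h=\h_x$ into $\p\simeq\co(1,n-1)\ltimes(\R^n)^*$ via injectivity of $\iota_{\hx}$ and then into the Levi factor $\o(1,n-1)$ --- is a legitimate alternative to the paper's route, which instead uses the isotropy representation into $\co(1,n-1)$ and must invoke Thurston's stability theorem to exclude the possibility that this $1$-jet representation is trivial; your version sidesteps that step.

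There is, however, a genuine gap in case (2). There the orbit is $\S^{k-1}$, of dimension $k-1$, and the only inequality you extract is $k-1\leq n$, i.e.\ $k\leq n+1$; your remark that ``the stronger $k-1\leq n$ already suffices when combined with excluding case (3)'' is wrong --- $k-1\leq n$ is \emph{weaker} than the desired $k\leq n$, it does not rule out $k=n+1$, and excluding case (3) is irrelevant to case (2). The paper closes this using precisely the geometric content of Proposition \ref{prop:orbites_o1k} that you quote but never exploit: in case (2) the induced conformal structure on the orbit is \emph{Riemannian} (positive definite), and a positive definite subspace of the Lorentzian tangent space $T_xM\simeq\R^{1,n-1}$ has dimension at most $n-1$. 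Hence $k-1\leq n-1$, i.e.\ $k\leq n$. (Equivalently: if $k-1$ were equal to $n$, the orbit would be open in $M$ and the Lorentz metric would restrict to a Riemannian one on a non-empty open set, which is absurd.) Without this signature argument your case (2) does not yield the corollary.
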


\begin{proof}
In order to ensure that there exists Zimmer points for $S$, we prove the
\begin{lem}
\label{lem:discompact_o1k}
The Lie subgroup $S$ is discompact.
\end{lem}

\begin{proof}
We compute that with respect to the root-space decomposition $\h = \h_{-\alpha} \oplus \a \oplus \m \oplus \h_{\alpha}$, we have
\begin{equation*}
\bar{\Ad_{\h}(S)}^{\text{Zar}} =
\left \{
\begin{pmatrix}
x^{-1} \id & & \\
 & \id & \\
 & & x \id
\end{pmatrix}
\exp(\ad_{\h} (X)) \ ; \ x \in \R^*, \ X \in \h_{\alpha}
\right \}
= \R^* \ltimes \exp(\ad_{\h}(\h_{\alpha})).
\end{equation*}
where $\R^*$ acts on $\exp(\ad_{\h}(\h_{\alpha}))$ by $x.\exp(\ad(X)) = \exp(x \ad(X))$. Indeed, the linear group $\exp(\ad(\h_{\alpha})) \subset \GL(\h)$ is unipotent, so it is algebraic and if $A \in \a$ is such that $\alpha(A)=1$, the group $\{\exp(t\ad(A)), \ t \in \R\}$ normalizes $\exp(\ad(\h_{\alpha}))$ and acts on it via powers of $\e^{t}$. 

Thus, an algebraic cocompact $S'$ subgroup of $\bar{\Ad_{\h}(S)}^{\text{Zar}}$ must contain the diagonal factor $\R^*$, and since a unipotent linear group does not admit any proper algebraic cocompact subgroup, we must have $S' = \R^* \ltimes \exp(\ad_{\h}(\h_{\alpha}))$.
\end{proof}

Since $S$ is discompact and amenable, it acts on $M$ preserving some non-trivial finite measure and Theorem \ref{thm:embed} gives us the existence of a Zimmer point $x$ for $S$. Proposition \ref{prop:orbites_o1k} describes three possibilities for the orbit $\mathcal{O}_x := H.x$.
\begin{itemize}
\item In the first case, we have $\dim \mathcal{O}_x = k$. In particular, we have $k \leq n$.

\item In the second case, the orbit $\mathcal{O}_x$ being a Riemannian submanifold, we must have $\dim \mathcal{O}_x \leq \dim M - 1$.  Since $\mathcal{O}_x$ has dimension $k-1$, we get $k \leq n$.

\item When $\mathcal{O}_x = \{x\}$, the isotropy representation $\rho_x : H \rightarrow \Conf(T_xM,[g_x])$ gives rise to a Lie algebra morphism $\iota : \o(1,k) \rightarrow \co(1,n-1)$. Moreover, $\iota(\o(1,k)) = [\iota(\o(1,k)),\iota(\o(1,k))] \subset [\co(1,n-1),\co(1,n-1)] = \o(1,n-1)$. Since $\o(1,k)$ is simple, this morphism is trivial or injective. But if $\iota = 0$, $\rho_x$ would be trivial by connectedness of $H$. In this situation, Thurston's stability theorem says that either $H$ acts trivially near $x$, or $H^1(H,\R) \neq 0$ (see \cite{cairns_ghys}, p. 140). The first case contradicts the faithfulness of the action, and the second the simplicity of $H$. Therefore, $\iota$ is an embedding of $\o(1,k)$ into $\o(1,n-1)$, and $k \leq n-1$.
\end{itemize}
In every case, we get $k \leq \dim M$.
\end{proof}

\subsubsection{Conformal actions of $\su(1,k)$}
\label{sss:su1k}

We finally treat the case where $H$ is a connected Lie group locally isomorphic to $\SU(1,k)$, $k \geq 2$. We state some algebraic facts that we will use. Recall that $\su(1,k)$ admits the root-space decomposition $\su(1,k) = \h_{-2\alpha} \oplus \h_{-\alpha} \oplus \a \oplus \m \oplus \h_{\alpha} \oplus \h_{2\alpha}$, where $\m \simeq \u(k-1)$. We note $\h_0 := \a \oplus \m$ the centralizer of the Cartan subspace $\a$.

\begin{lem}
\label{lem:trick_su1k}
Let $\h' \subset \su(1,k)$ be a Lie subalgebra such that $[\h_{\alpha} , \h'] \subset \h'$. If $\h' \cap \h_{-\alpha} \neq 0$, then $\h' = \su(1,k)$.
\end{lem}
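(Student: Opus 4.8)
The plan is to mimic the strategy used for the analogous statement in $\o(1,k)$, namely Lemma~\ref{lem:trick_o1k}(2), adapting the matrix computations to the $(BC)_1$ root system of $\su(1,k)$. Concretely, I fix the Cartan involution $\theta : X \mapsto -X^*$ associated to the root-space decomposition $\su(1,k) = \h_{-2\alpha} \oplus \h_{-\alpha} \oplus \a \oplus \m \oplus \h_{\alpha} \oplus \h_{2\alpha}$ from Section~\ref{sss:preliminaries_orthogonal}, and I pick a nonzero $Y \in \h' \cap \h_{-\alpha}$. The first task is to show that $\h_{-\alpha} \subset \h'$: for an arbitrary $X \in \h_{-\alpha}$, the element $\theta(X) \in \h_{\alpha}$ satisfies $[\theta(X),Y] \in \h' \cap \h_0$ by the hypothesis $[\h_\alpha,\h'] \subset \h'$, and then $[[Y,\theta(X)],Y] \in \h'$. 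The key computation is to show, using the explicit matrix model of $\su(1,k)$, that $[[Y,\theta(X)],Y]$ is a nonzero multiple of $X$ plus a multiple of $Y$ (the precise identity should again involve the Killing-type form $B_\theta(X,Y)=\Tr(X\theta(Y))$, which is definite on $\h_{-\alpha}$); since $Y$ is already in $\h'$, this forces $X \in \h'$.

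Once $\h_{-\alpha} \subset \h'$, I would propagate upward through the bracket relations. Using the matrix representation one checks $[\h_{-\alpha},\h_{-\alpha}] = \h_{-2\alpha}$ (this is the $(BC)_1$ phenomenon, exactly parallel to how $[\h_\alpha,\h_\alpha]=\h_{2\alpha}$ in Lemma~\ref{lem:heis_su_sp}), hence $\h_{-2\alpha} \subset \h'$. Next, $[\h_{-2\alpha},\h_{2\alpha}]$ and $[\h_{-\alpha},\h_\alpha]$ together span $\h_0 = \a \oplus \m$; more carefully, I expect $[\h_{-\alpha},\h_\alpha] + [\h_{-2\alpha},\h_{2\alpha}] = \h_0$, so combined with $[\h_\alpha,\h']\subset\h'$ one gets $\h_0 \subset \h'$. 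Finally $\h_\alpha = [\h_0,\h_\alpha]$ and $\h_{2\alpha}=[\h_0,\h_{2\alpha}]=[\h_\alpha,\h_\alpha]$ (all of which are routine from the matrix model), giving $\h_\alpha \oplus \h_{2\alpha}\subset\h'$ and therefore $\h' = \su(1,k)$.

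The step I expect to be the main obstacle is the core identity $[[Y,\theta(X)],Y] = c_1 X + c_2 Y$ with $c_1 \neq 0$ when $Y \neq 0$: in $\su(1,k)$ the root space $\h_\alpha$ is larger (dimension $2k-2$, identified with $\C^{k-1}$) and the bracket $\h_\alpha \times \h_\alpha \to \h_{2\alpha}$ is nontrivial, unlike in $\o(1,k)$ where $\h_\alpha$ was abelian. So when I compute $[\theta(X),Y]$ there is both an $\a$-component and an $\m \simeq \u(k-1)$-component to track, and then bracketing back with $Y$ requires knowing how $\m$ acts on $\h_{-\alpha} \simeq \C^{k-1}$. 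The saving grace is that the computation is still a finite, explicit matrix manipulation in the block form of Section~\ref{sss:preliminaries_orthogonal}, and the structure constants are governed by the Hermitian form on $\C^{k-1}$; one should be able to arrange $\theta$ and the identification so that the $\u(k-1)$-part contributes a term proportional to $Y$ and the $\a$-part contributes the term proportional to $X$, with coefficient a nonzero multiple of $B_\theta(Y,Y)$. I would also keep an eye on low rank: the argument should work for all $k \geq 2$ here (no analogue of the ``$\o(k-1)$ has no codimension-one subalgebra'' subtlety is needed, since we are proving $\h' = \h$ outright), but I would double-check the $k=2$ case where $\m$ is one-dimensional.
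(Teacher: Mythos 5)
Your proposal is essentially the paper's proof: the same Cartan--involution trick to obtain $\h_{-\alpha} \subset \h'$, followed by the same propagation $[\h_{-\alpha},\h_{-\alpha}]=\h_{-2\alpha}$, $[\h_{-\alpha},\h_{\alpha}]=\h_0$, $[\h_0,\h_{\alpha}]=\h_{\alpha}$, $[\h_{\alpha},\h_{\alpha}]=\h_{2\alpha}$ (the paper gets $\h_0$ from $[\h_{-\alpha},\h_{\alpha}]$ alone, which is indeed all of $\h_0$, so your extra term $[\h_{-2\alpha},\h_{2\alpha}]$ is harmless but not needed). One substantive remark on the step you rightly flag as the obstacle: identifying $\h_{-\alpha}\simeq\C^{k-1}$ via $v\mapsto Y_v$ in the matrix model and taking $Y=Y_v$, $X=Y_w$, the computation gives $[[Y,\theta(X)],Y]=|v|^2\,Y_w+Y_{-2\langle v,w\rangle v}$ where $\langle v,w\rangle$ is the \emph{complex} Hermitian product; so the ``multiple of $Y$'' is a complex multiple, i.e.\ the double bracket lies in $\R X+\C Y$ rather than $\R X+\R Y$ (the paper's stated identity, with real coefficient $B_\theta(X,Y)=-2\,\mathrm{Re}\langle v,w\rangle$, records only the real part). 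This does not break the argument: applying the identity first with $w=iv$ yields $Y_{iv}\in\h'$, hence the whole real $2$-plane $\C v\subset\h_{-\alpha}$ lies in $\h'$, and then the general case of $X=Y_w$ follows since $Y_{-2\langle v,w\rangle v}\in\Span_{\R}(Y_v,Y_{iv})\subset\h'$. Your concern about $k=2$ is unfounded; nothing degenerates there, as the argument never needs irreducibility of the $\m$-action.
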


\begin{proof}
We note $\theta : X \mapsto - ^{t} \! \bar{X}$ a Cartan involution (with respect to our root-space decomposition). Let $Y \in \h'\cap \h_{-\alpha}$ non-zero and $X \in \h_{-\alpha}$. Since $\theta(X) \in \h_{\alpha}$, we have $[\theta(X),Y] \in \h'$. An elementary matrix computation (using the matrices of Section \ref{sss:heisenberg}, proof of Lemma \ref{lem:heis_su_sp}) gives $[[Y,\theta(X)],Y] = B_{\theta}(X,Y) Y - \frac{1}{2} B_{\theta}(Y,Y) X$, where $B_{\theta}(X,Y) := \Tr(X\theta(Y))$ (it is a negative definite quadratic form on $\h$ since $\theta$ is a Cartan involution). Therefore, since $Y \neq 0$ and $[[Y,\theta(X)],Y] \in \h'$, we get $X \in \h'$, proving $\h_{-\alpha} \subset \h'$.

Using once more the matrix representation of $\su(1,k)$, we can compute that $[\h_{-\alpha},\h_{-\alpha}] = \h_{-2\alpha}$, $[\h_{\alpha},\h_{\alpha}] = \h_{2\alpha}$, $[\h_{-\alpha},\h_{\alpha}] = \h_0$ and $[\h_0,\h_{\alpha}] = \h_{\alpha}$. Since $\h_{-\alpha} \subset \h'$, we obtain successively $\h_{-2\alpha} \subset \h'$, $\h_0 \subset \h'$, $\h_{\alpha} \subset \h'$ and $\h_{2\alpha} \subset \h'$.
\end{proof}

Let $S < H$ be the connected Lie subgroup whose Lie algebra is $\s = \a \oplus \h_{\alpha} \oplus \h_{2\alpha}$.

\begin{lem}
\label{lem:discompact_su1k}
The Lie subgroup $S < H$ is discompact.
\end{lem}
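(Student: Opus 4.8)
The plan is to repeat, nearly verbatim, the argument used for Lemma~\ref{lem:discompact_o1k}, the only new ingredient being the second positive root space $\h_{2\alpha}$. Concretely, I would first compute the Zariski closure $\bar{\Ad_{\h}(S)}^{\text{Zar}}$ explicitly, recognize it as a semidirect product $D\ltimes U$ of a one-dimensional split torus $D$ by a unipotent group $U$, and then show that such a group has no proper algebraic normal cocompact subgroup.

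For the first point, fix $A\in\a$ with $\alpha(A)=1$. Then $\ad_{\h}(A)$ is $\R$-semisimple and acts by the scalar $j$ on the summand $\h_{j\alpha}$ of $\h=\h_{-2\alpha}\oplus\h_{-\alpha}\oplus\h_0\oplus\h_{\alpha}\oplus\h_{2\alpha}$ (here $\h_0=\a\oplus\m$, $j=0$), so the Zariski closure of the line $\{\exp(t\,\ad_{\h}A)\}_{t\in\R}$ is the split torus $D\cong\R^*$ acting block-diagonally by $x\mapsto\mathrm{diag}(x^{-2}\id,x^{-1}\id,\id,x\id,x^2\id)$. Since the elements of $\h_{\alpha}\oplus\h_{2\alpha}$ are nilpotent matrices, their images under $\ad_{\h}$ are nilpotent endomorphisms of $\h$ and span a nilpotent subalgebra of $\gl(\h)$; hence $U:=\exp(\ad_{\h}(\h_{\alpha}\oplus\h_{2\alpha}))$ is a unipotent, therefore algebraic, subgroup of $\GL(\h)$, and it is normalized by $D$ because $[\a,\h_{\alpha}\oplus\h_{2\alpha}]\subseteq\h_{\alpha}\oplus\h_{2\alpha}$. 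As $\Ad_{\h}(S)$ is generated by $\{\exp(t\,\ad_{\h}A)\}$ together with $U$, this yields $\bar{\Ad_{\h}(S)}^{\text{Zar}}=D\ltimes U$, which is exactly the picture from the orthogonal case with the single extra weight $x^2$.

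For the second point, let $S'\subset D\ltimes U$ be an algebraic cocompact subgroup and let $\mathrm{pr}\colon D\ltimes U\to D$ be the projection. The image $\mathrm{pr}(S')$ is an algebraic subgroup of $D\cong\GL_1(\R)$ whose quotient $D/\mathrm{pr}(S')$ is compact, being a continuous image of $(D\ltimes U)/S'$; since the proper algebraic subgroups of $\GL_1(\R)$ are finite and hence have non-compact quotient, we must have $\mathrm{pr}(S')=D$. Consequently $D\ltimes U=U\cdot S'$, so $(D\ltimes U)/S'\cong U/(U\cap S')$, which must also be compact; but $U\cap S'$ is an algebraic subgroup of the unipotent group $U$, so this quotient is diffeomorphic to some $\R^m$, forcing $m=0$, i.e. $U\subseteq S'$. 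Together with $\mathrm{pr}(S')=D$ this gives $S'=D\ltimes U$, so no proper algebraic cocompact subgroup exists; in particular $\bar{\Ad_{\h}(S)}^{\text{Zar}}$ admits no proper algebraic normal cocompact subgroup, which is exactly the discompactness of $S$.

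Everything above is formal except the identification $\bar{\Ad_{\h}(S)}^{\text{Zar}}=D\ltimes U$, which is the step I expect to need a little care: one must check that $\Ad_{\h}(S)$ is generated by the torus line and the unipotent group $U$ (a routine connected-subgroup argument), that $D\ltimes U$ is algebraic (the product of an algebraic group with a normal algebraic unipotent subgroup), and that no further elements appear in the Zariski closure. Once this is done, the remaining $\R^*\ltimes(\text{unipotent})$ bookkeeping is word for word the same as in Lemma~\ref{lem:discompact_o1k}, so I do not expect any genuine obstacle.
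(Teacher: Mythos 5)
Your proposal is correct and follows essentially the same route as the paper, which itself simply notes that the argument of Lemma~\ref{lem:discompact_o1k} carries over with the extra eigenvalue coming from $\h_{2\alpha}$: identify $\bar{\Ad_{\h}(S)}^{\text{Zar}}$ as $\R^*\ltimes\exp(\ad_{\h}(\h_{\alpha}\oplus\h_{2\alpha}))$ and rule out proper algebraic cocompact subgroups. Your write-up is in fact a bit more careful than the paper's (projecting onto the torus factor and then handling $U/(U\cap S')$), but it is the same argument.
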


\begin{proof}
The proof is strictly similar to the one of Lemma \ref{lem:discompact_o1k}, the only difference being that the adjoint action of the Cartan subgroup has two others eigenvalues.
\end{proof}

Since $S$ is amenable, it acts on $M$ preserving a finite measure and Theorem \ref{thm:embed} ensures the existence of Zimmer points for $S$. The aim of this section is to prove the following Proposition.

\begin{prop}
Let $x$ be a Zimmer point for $S$. Then, the Lie algebra of the stabilizer $H_x$ of $x$ in $H$ can be written $\h_x = \h_{0,x} \oplus \h_{\alpha} \oplus \h_{2\alpha}$, where $\h_{0,x}$ is a codimension $1$ subalgebra of $\h_0$. In particular, the orbit $H.x$ has dimension $2k$. Moreover, this orbit has Lorentz signature.
\end{prop}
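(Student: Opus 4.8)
The plan is to follow the scheme of the proof of Proposition \ref{prop:orbites_o1k}, exploiting the dilation coming from the Cartan subspace $\a$, but now taking advantage of the second restricted root $2\alpha$. Write $\h = \h_{-2\alpha}\oplus\h_{-\alpha}\oplus\h_0\oplus\h_{\alpha}\oplus\h_{2\alpha}$ with $\h_0 = \a\oplus\m$, $\m\simeq\u(k-1)$, fix $A\in\a$ with $\alpha(A)=1$, and recall $\dim\h = k^2+2k$. Since $x$ is a Zimmer point for $S$ and $\a,\h_{\alpha},\h_{2\alpha}\subset\s$, Lemma \ref{lem:zimmer_stabilizer} gives $[\s,\h_x]\subset\h_x$; in particular $\h_x$ is $\ad(A)$-invariant and splits as $\h_x = \bigoplus_j(\h_x\cap\h_{j\alpha})$. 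If $\h_x\cap\h_{-\alpha}\neq 0$, Lemma \ref{lem:trick_su1k} forces $\h_x=\h$; if $\h_x\cap\h_{-2\alpha}\neq 0$, then since a direct matrix computation in the model of \S\ref{sss:heisenberg} gives $[\h_{\alpha},\h_{-2\alpha}]=\h_{-\alpha}$, we get $\h_x\cap\h_{-\alpha}\neq 0$, hence again $\h_x=\h$. This global fixed-point case is excluded as in the third item of the proof of Corollary \ref{cor:majoration_o1k}: the isotropy representation at $x$ would produce, by simplicity of $\h$ and Thurston's stability theorem, an embedding $\su(1,k)\hookrightarrow\o(1,n-1)$; but no such embedding exists, since $\ad(A)$ acts with five distinct eigenvalues on $\su(1,k)$, while any $\ad$-semisimple element of $\o(1,n-1)$ with real spectrum is conjugate into the one-dimensional Cartan subspace and so has at most three. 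Therefore $\h_x\subset\h_0\oplus\h_{\alpha}\oplus\h_{2\alpha}$, whose dimension is $k^2+1$.

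Now consider the conformal dilation $u_t := \bar{\Ad}(\e^{tA})\in\Conf(\h/\h_x,[q_x])$ given by Proposition \ref{prop:zimmer_conformal}, where $q_x$ is sub-Lorentzian. As $\pi_x$ is injective on $\h_{-2\alpha}\oplus\h_{-\alpha}$, the space $\pi_x(\h_{-\alpha})$ has dimension $2k-2\geq 2$, so $q_x$ cannot vanish identically on it; since $u_t$ acts there by $\e^{-t}\id$, the conformal factor of $u_t$ must be $\e^{-2t}$. Comparing the $\ad(A)$-weights $-2,-1,0,1,2$ of $u_t$ on the summands $\pi_x(\h_{j\alpha})$ then forces $q_x$ to vanish on $\pi_x(\h_0\oplus\h_{\alpha}\oplus\h_{2\alpha})$, which, being totally isotropic, has dimension at most $1$. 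As $\h_x$ is precisely the kernel of $\pi_x$ restricted to $\h_0\oplus\h_{\alpha}\oplus\h_{2\alpha}$, we get $\dim\h_x\geq (k^2+1)-1 = k^2$, so either $\dim\h_x = k^2$, or $\h_x = \h_0\oplus\h_{\alpha}\oplus\h_{2\alpha}$. The latter is impossible: then $\pi_x(\h_{-2\alpha})$ would be the one-dimensional radical of $q_x$, hence preserved by $\bar{\Ad}(H_x)$, whereas $\bar{\Ad}(\exp\h_{\alpha})$ does not preserve it because $[\h_{\alpha},\h_{-2\alpha}]=\h_{-\alpha}$ projects injectively into $\h/\h_x$.

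Hence $\dim\h_x = k^2$, i.e.\ $\h_x$ is a codimension-one $\ad(A)$-graded subalgebra of $\h_0\oplus\h_{\alpha}\oplus\h_{2\alpha}$; thus exactly one of the inclusions $\h_x\cap\h_{2\alpha}\subset\h_{2\alpha}$, $\h_x\cap\h_{\alpha}\subset\h_{\alpha}$, $\h_x\cap\h_0\subset\h_0$ is proper, of codimension one, the other two being equalities. If $\h_x=\h_0\oplus\h_{\alpha}$, it is not a subalgebra since $[\h_{\alpha},\h_{\alpha}]=\h_{2\alpha}\neq 0$. If $\h_x\supset\h_0$ while $\h_x\cap\h_{\alpha}$ is a hyperplane of $\h_{\alpha}$, then $\h_x\cap\h_{\alpha}$ would be $\ad(\h_0)$-invariant, contradicting the irreducibility of $\ad(\h_0)$ on $\h_{\alpha}$ (already $\ad(\m)\simeq\u(k-1)$ acts on $\h_{\alpha}\simeq\C^{k-1}$ with no proper nonzero invariant real subspace, being transitive on spheres). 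There remains $\h_x = \h_{0,x}\oplus\h_{\alpha}\oplus\h_{2\alpha}$, with $\h_{0,x}:=\h_x\cap\h_0$ a codimension-one subalgebra of $\h_0$, so that $\dim H.x = \dim\h-\dim\h_x = 2k$.

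It remains to identify the signature. Now $\h/\h_x = \pi_x(\h_{-2\alpha})\oplus\pi_x(\h_{-\alpha})\oplus\pi_x(\h_0)$, of dimensions $1,\ 2k-2,\ 1$ and $u_t$-weights $-2,-1,0$; with conformal factor $\e^{-2t}$ the only possibly nonzero pairings under $q_x$ are the one on $\pi_x(\h_{-\alpha})$ and the one between $\pi_x(\h_{-2\alpha})$ and $\pi_x(\h_0)$ (both of weight sum $-2$). Were the latter zero, then $\pi_x(\h_{-2\alpha})\oplus\pi_x(\h_0)$, of dimension $2$, would lie in the radical of $q_x$, which is impossible for a sub-Lorentzian form; so this plane is non-degenerate of signature $(1,1)$ and orthogonal to $\pi_x(\h_{-\alpha})$, and the sub-Lorentzian condition then forces $q_x|_{\pi_x(\h_{-\alpha})}$ to be positive definite, whence $q_x$ has signature $(1,2k-1)$ and $H.x$ is a Lorentzian submanifold. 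The steps I expect to be the real work are precisely the two that are special to $\su(1,k)$: excluding the fixed point (where the obstruction to embedding $\su(1,k)$ into $\o(1,n-1)$ is exactly the presence of the root $2\alpha$), and excluding $\h_x=\h_0\oplus\h_{\alpha}\oplus\h_{2\alpha}$ via the non-invariance of the null line $\pi_x(\h_{-2\alpha})$; the remainder is the same bookkeeping as in the $\o(1,k)$ case.
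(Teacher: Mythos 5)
Your proof is correct and follows essentially the same route as the paper: exclude the fixed point via the $(BC)_1$-versus-$A_1$ obstruction, use the conformal dilation $u_t=\bar{\Ad}(\e^{tA})$ with factor $\e^{-2t}$ to make $\pi_x(\h_0\oplus\h_\alpha\oplus\h_{2\alpha})$ totally isotropic, rule out $\h_x=\h_0\oplus\h_\alpha\oplus\h_{2\alpha}$ by the non-invariance of the null line $\pi_x(\h_{-2\alpha})$ under $\bar{\Ad}(\exp\h_\alpha)$, and read off the signature from the weight pairings. The only (harmless) reorganization is that you pin down $\dim\h_x$ first and locate the codimension by a case analysis, where the paper instead proves $\h_0\not\subset\h_x$ directly via the irreducibility of $\ad(\h_0)$ on $\h_\alpha$.
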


As an immediate corollary, we get that if a Lie group locally isomorphic to $\SU(1,k)$ acts faithfully and conformally on a compact Lorentz manifold $(M,g)$ of dimension at least $3$, then $2k \leq \dim M$, finishing the proof of Theorem \ref{thm:main} in the case of real rank $1$ simple Lie groups.

\begin{proof}

\begin{fact}
\label{fact:one}
We have $\h_x \neq \h$.
\end{fact}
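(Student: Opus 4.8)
The plan is to argue by contradiction: suppose $\h_x = \h$, i.e. the Zimmer point $x$ is a \emph{fixed point} of the whole group $H$. Then the isotropy representation at $x$ gives a Lie algebra morphism $\rho_x : \su(1,k) \to \co(1,n-1)$ (the linear conformal group of the Lorentzian tangent space $T_xM$). The key dichotomy is that $\su(1,k)$ is simple, so $\rho_x$ is either injective or trivial. I would first rule out the trivial case exactly as in the $\o(1,k)$ argument of Corollary \ref{cor:majoration_o1k}: if $\rho_x = 0$ then $H$ acts trivially to first order at $x$, and Thurston's stability theorem (\cite{cairns_ghys}) forces either a local triviality of the action near $x$ (contradicting faithfulness) or $H^1(H,\R) \neq 0$ (contradicting simplicity of $\h$). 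Hence $\rho_x$ must be injective, and moreover, since $\su(1,k)$ is perfect, $\rho_x(\su(1,k)) = [\rho_x(\su(1,k)),\rho_x(\su(1,k))] \subset [\co(1,n-1),\co(1,n-1)] = \o(1,n-1)$, so we get an embedding $\su(1,k) \hookrightarrow \o(1,n-1)$.

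The contradiction must then come from an incompatibility of this embedding with the Zimmer point structure, which is where the extra hypotheses of the Proposition enter. Since $x$ is a Zimmer point for $S$, Proposition \ref{prop:zimmer_conformal} applies to the orbit, but here the orbit is trivial so instead I would use the fuller information: Proposition \ref{prop:morphisme_surjectif} gives an algebraic subgroup $P^{\hx} < P$ and a surjective algebraic morphism $\rho_{\hx} : P^{\hx} \twoheadrightarrow \Ad_\h(S)$. As in Section \ref{ss:exclusion}, one reduces $P^{\hx}$ to a unipotent subgroup of $P < \PO(2,n)$ (the Levi and torus parts die because $\Ad_\h(S)$ is unipotent, using \cite{ratner}), hence to a subalgebra $\u \subset \u_{\max} \simeq \R \ltimes \heis^{\C}(2n-3)$ surjecting onto $\ad_\h(\s)$, where $\s = \a \oplus \h_\alpha \oplus \h_{2\alpha}$. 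But when $x$ is a fixed point, $\h_x = \h$ forces in addition that $\iota_{\hx}$ maps \emph{all} of $\h$ into $\p$, so $\h^{\hx} := \iota_{\hx}(\h) \subset \p$ and the surjection $\rho_{\hx}$ together with the bound $\dim \p$ constrains $\dim \s$ against the nilpotent structure of $\p$; I would extract the contradiction by comparing the derived-series lengths or the dimensions of the graded pieces, exactly in the spirit of the algebraic lemma at the end of Section \ref{ss:exclusion}. Concretely: $\Ad_\g(P^{\hx})$ stabilizes $\h^{\hx}$ and acts on it reproducing $\Ad_\h(S)$ acting on $\h$, but a unipotent subalgebra of $\p$ cannot support the two-step nilpotent action of $\ad_\h(\s)$ on all of $\su(1,k)$ while $\h^{\hx}$ sits inside the parabolic $\p$.

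The main obstacle I anticipate is the last step: producing a clean algebraic contradiction from "$\su(1,k)$ embeds in $\o(1,n-1)$ and simultaneously $\iota_{\hx}(\su(1,k)) \subset \p$ is $\Ad(P^{\hx})$-stable with $P^{\hx}$ unipotent surjecting onto $\Ad_\h(\s)$". One expects this to fail because $\su(1,k)$ contains the semisimple part $\m \simeq \u(k-1)$ whose compact factor cannot be absorbed into a unipotent-plus-parabolic picture — the isotropy representation of a fixed point of a simple group of rank $1$ other than $\SO(1,k)$ simply has no room inside the conformal model. A cleaner route, which I would try first, is to bypass Thurston entirely and observe that a fixed point $x$ gives $\Ad_\h(S)$-invariance of $\h_x = \h$ trivially, so the real content is that the \emph{linearization} at $x$ must be conformally linearizable near $x$ by the Cartan-geometric holonomy; then the subgroup of $\PO(2,n)$ generated by the image is precompact-free and one runs a dimension count $\dim \su(1,k) = k^2 + 2k$ versus the subalgebras of $\o(1,n-1) \subset \o(2,n)$ of the required type, forcing $n$ too large only if the orbit were nontrivial — hence the fixed point is impossible once combined with the discompactness forcing a genuine (non-closed) $S$-orbit through $x$. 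Whichever route, the crux is purely algebraic and local, parallel to Section \ref{ss:exclusion}.
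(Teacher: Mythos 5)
Your first paragraph is exactly the paper's reduction: assuming $\h_x = \h$, the isotropy representation plus Thurston's stability theorem and the perfectness of $\su(1,k)$ yield a Lie algebra embedding $\su(1,k) \hookrightarrow \o(1,n-1)$, as in the third bullet of the proof of Corollary \ref{cor:majoration_o1k}. But from that point on there is a genuine gap: you never produce the contradiction, and you say so yourself (``the main obstacle I anticipate is the last step''). The paper's contradiction is a one-line root-system argument: both $\su(1,k)$ ($k\geq 2$) and $\o(1,N)$ have real rank $1$, so a Cartan subspace of the former is a Cartan subspace of the latter and the restricted roots of the subalgebra must occur among those of the ambient algebra; but the restricted root system of $\su(1,k)$ is $(BC)_1 = \{\pm\alpha,\pm 2\alpha\}$ while that of $\o(1,N)$ is $A_1 = \{\pm\alpha\}$, which contains no pair of roots in ratio $2$. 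Hence no such embedding exists. This is the missing idea; none of your proposed substitutes supplies it.

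Concretely, both alternative routes you sketch would fail. The dimension count cannot work: $\dim\o(1,n-1) = n(n-1)/2$ dwarfs $\dim\su(1,k) = k^2+2k$ for large $n$, so nothing is ``too large''; the obstruction is structural, not dimensional (indeed $\o(1,k)$ \emph{does} embed in $\o(1,n-1)$, which is why case (3) of Proposition \ref{prop:orbites_o1k} is allowed there). The route through Proposition \ref{prop:morphisme_surjectif} and the unipotent reduction of Section \ref{ss:exclusion} also breaks down at the first step: that reduction used that $\Ad_{\h}(S)$ is unipotent to kill the Levi and torus factors of $P^{\hx}$, but here $\s = \a \oplus \h_{\alpha} \oplus \h_{2\alpha}$ contains the Cartan subspace $\a$, so $\Ad_{\h}(S)$ is not unipotent and $P^{\hx}$ cannot be assumed unipotent. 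Finally, discompactness of $S$ does not force a nontrivial orbit through a Zimmer point, so the last sentence of your ``cleaner route'' has no basis.
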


\begin{proof}
If not, we would have a Lie algebra embedding $\su(1,k) \hookrightarrow \o(1,n-1)$ (same arguments than in the proof of Corollary \ref{cor:majoration_o1k}). But the root-system of $\su(1,k)$ being $\{\pm \alpha, \pm 2\alpha \}$, it cannot be embedded into any $\o(1,N)$, $N \geq 2$, whose root-system is $\{\pm \alpha\}$.
\end{proof}

We reuse the notations of the proof of Proposition \ref{prop:orbites_o1k}. Let $\pi_x : \h \rightarrow \h / \h_x$ be the natural projection and let $A \in \a \subset \s$ such that $\alpha(A) = 1$. By Proposition \ref{prop:zimmer_conformal}, we know that $\ad(\a) \h_x$ and that $u_t := \bar{\Ad}(\e^{tX}) \in \Conf(\h / \h_x, [q_x])$. Thus, there exists $\lambda \in \R$ such that $u_t^* q_x = \e^{\lambda t} q_x$. If $v \in \h_{-\alpha}$, we have $q_x(u_t(\pi_x(v))) = \e^{\lambda t} q_x(\pi_x(v)) = \e^{-2t} q_x(\pi_x(v))$. But Lemma \ref{lem:trick_su1k} and Fact \ref{fact:one} imply that $\dim \pi_x(\h_{-\alpha}) = \dim \h_{-\alpha} \geq 2$. Since $q_x$ is sub-Lorentzian, it cannot vanish identically on $\pi_x(\h_{-\alpha})$, proving $\lambda = -2$.

Consequently, for every $\beta,\beta' \in \{0, \pm \alpha, \pm 2\alpha \}$, $\pi_x(\h_{\beta})$ and $\pi_x(\h_{\beta'})$ are orthogonal with respect to $q_x$ as soon as $\beta + \beta' \neq -2\alpha$. Indeed, noting $B_x$ the associated bilinear form, if $X \in \h_{\beta}$ and $Y \in \h_{\beta'}$, we have $B_x(u_t(\pi_x(X)),u_t(\pi_x(Y))) = \e^{t(\beta + \beta')(A)} B_x(\pi_x(X),\pi_x(Y)) = \e^{-2t}B_x(\pi_x(X),\pi_x(Y))$. Therefore, if $(\beta + \beta')(A) \neq -2$ we must have $B_x(\pi_x(X),\pi_x(Y))=0$. In particular, $\pi_x(\h_0 \oplus \h_{\alpha} \oplus \h_{2\alpha})$ is totally isotropic with respect to $q_x$ and must have dimension at most $1$, \textit{ie} $\h_x \cap (\h_0 \oplus \h_{\alpha} \oplus \h_{2\alpha})$ has codimension $\leq 1$ in $\h_0 \oplus \h_{\alpha} \oplus \h_{2\alpha}$.

On the other hand, by Lemma \ref{lem:trick_su1k}, $\h_x \cap \h_{-\alpha} = 0$. This implies $\h_x \cap \h_{-2\alpha} = 0$. Indeed, $\h_{-2\alpha}$ being $1$-dimensional, the contrary would be $\h_{-2\alpha} \subset \h_x$ and since $[\h_{-2\alpha},\h_{\alpha}] = \h_{-\alpha}$ we would have $\h_{-\alpha} \subset \h_x$. Since $\Ad(\e^{tA}) \h_x \subset \h_x$ and since this flow acts diagonally with different exponential rates on the $\h_{\beta}$'s, we obtain that $\h_x \subset \h_0 \oplus \h_{\alpha} \oplus \h_{2\alpha}$.

\begin{fact}
We have $\h_0 \nsubset \h_x$ (implying that $\h_0 \cap \h_x$ has codimension $1$ in $\h_0$).
\end{fact}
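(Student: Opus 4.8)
The strategy is to argue by contradiction: assuming $\h_0 \subseteq \h_x$, I would first narrow $\h_x$ down to the minimal parabolic subalgebra $\h_0 \oplus \h_\alpha \oplus \h_{2\alpha}$, and then rule this out using conformality once more. So suppose $\h_0 \subseteq \h_x$. Since $\h_x$ is a Lie subalgebra containing $\h_0$, it is $\ad(\h_0)$-invariant; in particular $\h_x \cap \h_\alpha$ is an $\ad(\h_0)$-submodule of $\h_\alpha$. A direct computation with the matrix model of Section \ref{sss:heisenberg} shows that $\ad(\m)$, hence a fortiori $\ad(\h_0)$, acts $\R$-irreducibly on $\h_\alpha$: for $k=2$ this is $\u(1)$ rotating $\R^2$, and for $k \geq 3$ the standard action of $\m \simeq \u(k-1)$ on $\h_\alpha \simeq \C^{k-1}$ is already $\R$-irreducible. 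Therefore $\h_x \cap \h_\alpha$ is either $0$ or all of $\h_\alpha$.

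If $\h_x \cap \h_\alpha = 0$, then $\h_x$ has codimension at least $\dim \h_\alpha = 2k-2 \geq 2$ inside $\h_0 \oplus \h_\alpha \oplus \h_{2\alpha}$, contradicting the codimension-$\leq 1$ bound obtained in the preceding paragraphs. Hence $\h_\alpha \subseteq \h_x$, whence also $\h_{2\alpha} = [\h_\alpha,\h_\alpha] \subseteq \h_x$, and combined with the inclusion $\h_x \subseteq \h_0 \oplus \h_\alpha \oplus \h_{2\alpha}$ already established, this forces $\h_x = \h_0 \oplus \h_\alpha \oplus \h_{2\alpha}$.

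To eliminate this case I would exploit the residual unipotent action of $\h_\alpha$ inside the stabilizer. Since $\h_x \cap \h_{-\alpha} = \h_x \cap \h_{-2\alpha} = 0$, we may write $\h/\h_x = V_1 \oplus V_0$ with $V_1 := \pi_x(\h_{-\alpha})$ and $V_0 := \pi_x(\h_{-2\alpha})$. The scaling law $u_t^* q_x = \e^{-2t}q_x$ together with the fact that $\bar{\Ad}(\e^{tA})$ acts on $V_0$ by $\e^{-2t}$ gives $q_x|_{V_0} = 0$, and the orthogonality relations give $V_0 \perp_{q_x} V_1$. Now pick $X \in \h_\alpha \subset \s$. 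As $\h_x$ is $\ad(X)$-invariant, $\ad(X)$ descends to $\h/\h_x$; since $[\h_\alpha,\h_{-\alpha}] \subseteq \h_0 \subseteq \h_x$ it kills $V_1$, since $[\h_\alpha,\h_{-2\alpha}] \subseteq \h_{-\alpha}$ it sends $V_0$ into $V_1$, and it squares to zero, so $\bar{\Ad}(\exp X) = \exp(\ad X)$ acts on $\h/\h_x$ and, by Proposition \ref{prop:zimmer_conformal}, belongs to $\Conf(\h/\h_x,[q_x])$. Fix $0 \neq v_0 \in V_0$. Using the elementary identity $[\h_\alpha,\h_{-2\alpha}] = \h_{-\alpha}$, the vector $w := \ad(X)(v_0) \in V_1$ ranges over all of $V_1$ as $X$ runs through $\h_\alpha$; conformality forces $q_x(v_0 + w) = \mu\, q_x(v_0) = 0$ for the relevant $\mu > 0$, whereas $q_x(v_0 + w) = q_x(v_0) + 2B_x(v_0,w) + q_x(w) = q_x(w)$ by the two properties just recorded. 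Hence $q_x$ vanishes on $V_1$, and therefore on all of $\h/\h_x$, so $\h/\h_x$ is totally $q_x$-isotropic. Since $\dim \h/\h_x = 2k-1 \geq 3$, this is incompatible with $q_x$ being sub-Lorentzian, and the contradiction proves $\h_0 \not\subseteq \h_x$.

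The hard part is this last step: $\h_0 \oplus \h_\alpha \oplus \h_{2\alpha}$ is a genuine parabolic subalgebra and the corresponding orbit $H/H_x$ is compact, so no purely algebraic or topological obstruction is available — the contradiction has to come from conformality, through the interaction of the $\h_\alpha$-action inside the isotropy with the already-computed scaling of $q_x$. The remaining ingredients, namely the $\R$-irreducibility of $\ad(\h_0)$ on $\h_\alpha$ and the surjectivity $[\h_\alpha,\h_{-2\alpha}] = \h_{-\alpha}$, are routine matrix computations in the model of Section \ref{sss:heisenberg}.
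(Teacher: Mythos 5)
Your proof is correct and follows essentially the same route as the paper: reduce to $\h_x = \h_0 \oplus \h_{\alpha} \oplus \h_{2\alpha}$ via irreducibility of $\ad(\h_0)$ on $\h_{\alpha}$, then contradict conformality of $\bar{\Ad}(\exp \h_{\alpha})$ using $[\h_{\alpha},\h_{-2\alpha}]=\h_{-\alpha}$ together with the isotropy and orthogonality relations already established. The only cosmetic difference is the final step: the paper observes that $\Ker q_x = \pi_x(\h_{-2\alpha})$ is not preserved by $\bar{\Ad}(\e^{tX})$, whereas you push the same computation one step further to show $q_x$ is totally isotropic on a space of dimension $\geq 2$; both yield the same contradiction with the sub-Lorentzian hypothesis.
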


\begin{proof}
Assume $\h_0 \subset \h_x$. Since $\dim \h_{\alpha} \geq 2$ we must have $\h_x \cap \h_{\alpha} \neq 0$ (if not $\dim \pi_x(\h_{\alpha}) \geq 2$). The adjoint action $\ad(\h_0)$ on $\h_{\alpha}$ being irreducible, we get $[\h_0,\h_{\alpha} \cap \h_x] = \h_{\alpha} \subset \h_x$, and also $\h_{2\alpha} = [\h_{\alpha},\h_{\alpha}] \subset \h_x$. We then have $\h_x = \h_0 \oplus \h_{\alpha} \oplus \h_{2\alpha}$ and $\pi_x : \h_{-2\alpha} \oplus \h_{-\alpha} \rightarrow \h/ \h_x$ is a linear isomorphism. Since $u_t^* q_x = \e^{-2t}q_x$, $\pi_x(\h_{-2\alpha})$ is isotropic and orthogonal to $\pi_x(\h_{-\alpha})$. This means that $q_x$ is degenerate and $\Ker q_x = \pi_x(\h_{-2\alpha})$. But $\bar{\Ad}(S)$ acts conformally on $(\h/\h_x , q_x)$, and has to preserve $\Ker q_x$. This is a contradiction since $\h_{\alpha} \subset \s$ and $[\h_{-2\alpha} , \h_{\alpha}] = \h_{-\alpha}$, implying that we have some $X \in \h_{\alpha}$ such that $\bar{\Ad}(\e^{tX})$ does not preserve $\pi_x(\h_{-2\alpha})$ in $\h / \h_x$.
\end{proof}

Finally, the Lie algebra of the stabilizer of $x$ is
\begin{equation*}
\h_x = \h_{x,0} \oplus \h_{\alpha} \oplus \h_{2\alpha},
\end{equation*}
where $\h_{x,0} \subset \h_0$ is a subalgebra of codimension $1$. Moreover, in $(\h / \h_x, q_x)$, the lines $\pi_x(\h_{-2\alpha})$ and $\pi_x(\h_0)$ are isotropic and orthogonal to $\pi_x(\h_{-\alpha})$. This implies that $q_x$ is a Lorentz quadratic form, and $\dim \h / \h_x = \dim \h_{-2\alpha} + \dim \h_{-\alpha} + 1 = 2k$.
\end{proof}

\subsection{Semi-simple Lie groups of rank $2$}
\label{ss:rank2}

Let $H$ be a real rank $2$ semi-simple Lie group without compact factor. Assume that $H$ acts conformally on a compact Lorentz manifold $(M^n,g)$.

The result of Bader and Nevo cited in the introduction (\cite{badernevo}) shows that when $H$ is simple, it must be locally isomorphic to $\SO(2,k)$, with $3 \leq k \leq n$. Thus we are left to study the case where $H$ is not simple.

We conclude using a general result on compact parabolic geometries admitting large groups of automorphisms. It is stated in \cite{embed}, Theorem 1.5. It applies in our situation since the homogeneous model space of Lorentzian conformal geometry, namely $\Ein^{1,n-1} = \PO(2,n) / P$, is parabolic. Here we assume that a real rank $2$ semi-simple Lie group $H$ acts by conformal transformations on a compact Lorentz manifold. The theorem of \cite{embed} implies that this manifold is conformally diffeomorphic to some quotient $\Gamma \backslash \tilde{\Ein^{1,n-1}}$ where $\Gamma$ is a discrete subgroup of $\Conf(\tilde{\Ein^{1,n-1}}) \simeq \tilde{\PO(2,n)}$. In particular, $H$ can be locally embedded into $O(2,n)$ and we have an injective Lie algebra morphism $\h \hookrightarrow \o(2,n)$. The following lemma finishes the proof of Theorem \ref{thm:main}.

\begin{lem}
Let $\h$ be a semi-simple Lie algebra without compact factor, with $\R$-rank $2$ and non-simple. If $\h$ can be embedded into some $\o(2,N)$, $N \geq 3$, then $\h \simeq \o(1,k) \oplus \o(1,k')$, with $k,k' \geq 2$. Moreover, except when $\h = \o(1,2) \oplus \o(1,2) \simeq \o(2,2)$, we have $k+k' \leq N$.
\end{lem}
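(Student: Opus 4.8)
The claim is a statement purely about embeddings of semi-simple Lie algebras, so the plan is to argue algebraically, exploiting the fact that a non-simple semi-simple Lie algebra without compact factor and of real rank $2$ is very constrained: it must be a direct sum $\h = \h_1 \oplus \h_2$ of two simple ideals, each without compact factor and each of real rank $1$ (since the real rank of a direct sum is the sum of the real ranks). By the classification recalled in Section \ref{ss:preliminaries}, each $\h_i$ is one of $\o(1,k_i)$, $\su(1,k_i)$, $\sp(1,k_i)$ or $\mathfrak{f}_4^{-20}$. So the first step is to show that if such a $\h_1 \oplus \h_2$ embeds into $\o(2,N)$, then both factors must be of orthogonal type $\o(1,k_i)$.

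\textbf{Ruling out the non-orthogonal factors.} The key tool here is the exclusion argument of Section \ref{ss:exclusion}: $\su(1,k)$ ($k\geq 2$), $\sp(1,k)$ ($k \geq 2$) and $\mathfrak{f}_4^{-20}$ all have restricted root system of type $(BC)_1$, i.e.\ restricted roots $\{\pm\alpha,\pm 2\alpha\}$, whereas $\o(1,k)$ has root system $\{\pm\alpha\}$ of type $A_1$. I would argue that a factor of type $(BC)_1$ cannot occur. For $\sp(1,k)$ and $\mathfrak{f}_4^{-20}$ this is immediate from the fact (Lemmas \ref{lem:heis_su_sp}, \ref{lem:heis_f4}) that they contain a copy of the quaternionic Heisenberg algebra $\heis^{\H}(7)$, combined with the structure of unipotent subalgebras of $P < \PO(2,n)$ worked out in Section \ref{ss:exclusion}: any unipotent subalgebra of $\p$ is conjugate into $\u_{\max} \simeq \R \ltimes \heis^{\C}(2N-3)$, and by the final algebraic lemma of that section there is no surjection $\u \to \heis^{\H}(7)$ from a subalgebra $\u < \u_{\max}$; since the nilpotent part $\h_\alpha \oplus \h_{2\alpha}$ of an $\sp$- or $\mathfrak{f}_4^{-20}$-factor would have to land in a unipotent subalgebra of $\o(2,N)$ (a nilpotent subalgebra of a reductive algebra consists of nilpotent elements; I need to be a touch careful to place it inside $\p$, but after a suitable choice of maximal compact / parabolic it can be arranged, or one invokes that the ambient $\o(2,N)$ itself embeds in a $\p$ of a larger model — actually the cleanest route is: a $(BC)_1$-factor contains $\heis^\H(7)$, whose abelianization-modulo-center quotient structure forces, inside $\o(2,N)$, a contradiction with the $(BC)_1 \not\hookrightarrow$ computation). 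For the $\su(1,k)$ factor one argues similarly: its nilradical $\h_\alpha \oplus \h_{2\alpha} \simeq \heis^{\C}(2k-3)$ has $2$-step nilpotency with one-dimensional derived algebra, and the real rank-$1$ structure of $\su(1,k)$ with $k\geq 2$ is incompatible with embedding into $\o(2,N)$ for rank reasons — more simply, I expect the right statement is that $\su(1,k)$ for $k\geq 2$ has no embedding into any $\o(2,N)$ that also contains another rank-$1$ factor commuting with it, because the centralizer of an $\su(1,k)$-factor inside $\o(2,N)$ is too small; this centralizer computation is the concrete thing to carry out.

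\textbf{The centralizer/dimension bound.} Once both factors are $\o(1,k_1)$ and $\o(1,k_2)$, I would determine exactly which such direct sums embed into $\o(2,N)$ and with what bound on $k_1+k_2$. The model case is the obvious block embedding $\o(1,k_1)\oplus\o(1,k_2) \hookrightarrow \o(2,k_1+k_2)$ coming from the orthogonal direct sum $\R^{1,k_1}\oplus\R^{1,k_2} = \R^{2,k_1+k_2}$, which gives $k_1+k_2 \leq N$ whenever it works. The plan is to show this is essentially the only way: an embedding $\h_1\oplus\h_2 \hookrightarrow \o(2,N) = \o(V)$, $V=\R^{2,N}$, makes $V$ a representation of $\h_1\oplus\h_2$ preserving a nondegenerate symmetric form of signature $(2,N)$; decompose $V$ into isotypic/irreducible pieces and use that each $\o(1,k_i)$ acting nontrivially on an indefinite space needs at least one "timelike" direction. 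The point is that the two factors together need a $(2,\ast)$-part of the form, and a careful bookkeeping of the signatures of the $\h_i$-subrepresentations forces $\dim V \geq k_1 + k_2 + 2$, i.e.\ $N \geq k_1+k_2$ — \emph{unless} the representation degenerates in the single low-dimensional coincidence $\o(1,2)\simeq\sl(2,\R)$, which can sit in $\o(2,N)$ in the "diagonal" way inside $\o(2,2)\simeq\o(1,2)\oplus\o(1,2)$, exactly the stated exception $\h = \o(1,2)\oplus\o(1,2)$.

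\textbf{Main obstacle.} The representation-theoretic bookkeeping in the last step is where the real work lies: one must check that there is no exotic embedding using, say, a spin or exterior-power representation of some $\o(1,k_i)$ that would let $k_1+k_2$ exceed $N$. I expect the cleanest argument uses that $\o(2,N)$ acts on $\R^{2,N}$, restricting to $\h_i$ one gets a representation whose nontrivial irreducible constituents, being constituents of a representation of real dimension $N+2$, are small; and an $\o(1,k_i)$-representation carrying an invariant form of Lorentz or sub-Lorentzian type and of dimension $< k_i+1$ must be trivial (its smallest nontrivial such representation is the standard $(k_i{+}1)$-dimensional one). Chasing this through for the pair, together with the coincidences $\o(1,2)\simeq\sl(2,\R)$ and $\o(1,3)\simeq\sl(2,\C)$ to make sure no further exception is missed, is the delicate part; the $(BC)_1$-exclusion of the first step, by contrast, is essentially already done in Section \ref{ss:exclusion} and only needs to be invoked.
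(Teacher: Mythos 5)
Your overall strategy (split $\h=\h_1\oplus\h_2$ into two rank-one simple factors, rule out the non-orthogonal types, then bound $k+k'$ by representation theory of $\R^{2,N}$) matches the paper's, but two steps are left with genuine gaps. First, the exclusion of an $\su(1,k)$ factor. Your Heisenberg route cannot work here: $\heis^{\C}(2k-1)$ embeds perfectly well into $\u_{\max}\simeq\R\ltimes\heis^{\C}(2N-3)$, so the algebraic incompatibility lemma of Section \ref{ss:exclusion} gives nothing, and indeed $\su(1,k)$ alone does embed into $\o(2,2k)$. Your fallback ("the centralizer of an $\su(1,k)$-factor inside $\o(2,N)$ is too small") is only a conjecture; carrying it out would require classifying all embeddings of $\su(1,k)$ into $\o(2,N)$ first. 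The argument that actually closes this (and simultaneously disposes of $\sp(1,k)$ and $\mathfrak{f}_4^{-20}$ with no Heisenberg machinery at all) is the one you only gesture at: since $\Rk_{\R}(\h)=2=\Rk_{\R}(\o(2,N))$, the restricted root system of $\h$ must be realized inside $B_2$, and $B_2$ contains no vector $\alpha$ such that $2\alpha$ is also a root; hence neither factor can have restricted root system $(BC)_1$, which forces both to be of type $\o(1,k_i)$.

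Second, the bound $k+k'\leq N$. Your bookkeeping only treats irreducible constituents of $\R^{2,N}$ on which a single factor acts, and even there the claim "the smallest nontrivial $\o(1,k_i)$-representation carrying an invariant (sub-)Lorentzian form has dimension $k_i+1$" is not free: spin representations of $\o(1,k_i)$ for small $k_i$ are smaller than the standard one, and what saves the day is showing that the invariant quadratic form restricts non-degenerately to the span of the nontrivial constituents of each factor, so that each $\rho_i$ is genuinely an orthogonal representation (the paper proves exactly this orthogonality). More importantly, you omit the case of an irreducible constituent that is faithful for \emph{both} factors: such a constituent is an outer tensor product $V_1\otimes V_2$, and one must check $\dim V_1\cdot\dim V_2\geq k_1+k_2+2$ using the minimal dimensions of faithful irreducible representations of $\o(n,\C)$ (which are spin representations for small $n$); this produces the borderline case $\o(6,\C)\oplus\o(3,\C)$ in $\o(8,\C)$, which must be excluded separately by a root-system argument ($A_3\oplus A_1\not\subset D_4$). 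Without handling the tensor-product case and the non-degeneracy of the restricted form, the inequality $k+k'\leq N$ is not established.
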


\begin{proof}
By hypothesis, $\h$ splits into $\h = \h_1 \oplus \h_2$, where $\h_1$ and $\h_2$ are simple, non-compact Lie algebra of $\R$-rank $1$. Therefore, $\h_1$ and $\h_2$ admit $A_1$ or $(BC)_1$ as restricted root systems. Since $\h$ has real rank $2$, its restricted root system can be realized into the restricted root system of $\o(2,N)$. The latter is $B_2$ and does not contain neither $A_1 \oplus (BC)_1$ nor $(BC)_1 \oplus (BC)_1$. This observation gives us directly $k,k' \geq 2$ such that $\h_1 \simeq \o(1,k)$ and $\h_2 \simeq \o(1,k')$.

Assume that $k \geq k'$ and $k > 2$. We want to prove $k+k' \leq N$. Considering the complexifications of the Lie algebras, it is enough to prove that if
\begin{equation*}
\rho : \o(n,\C) \oplus \o(m,\C) \hookrightarrow \o(N,\C),
\end{equation*}
with $n \geq m$, $n \geq 4$ and $N \geq 5$, then $n+m \leq N$. We treat this question by using results on linear representations of complex orthogonal Lie algebras. We note $\g_1 = \o(n,\C)$ and $\g_2 = \o(m,\C)$.

Since $\g_1 \oplus \g_2$ is semi-simple, $\rho$ is completely reducible. Let $\C^N = (\bigoplus_i V^i) \oplus E$ be a decomposition into irreducible subrepresentations, where for all $i$, $\rho|_{V^i}$ is non-trivial (implying $\dim V^i \geq 2$) and $E = \bigcap_{X \in \g_1 \oplus \g_2} \ker \rho(X)$. We distinguish two cases: either there exists $i$ such that $\rho|_{V^i}$ is faithful or for all $i$, the kernel of $\rho|_{V^i}$ is $\g_1$ or $\g_2$ (and $\rho|_{V^i}$ is in fact a faithful irreducible representation of $\g_2$ or $\g_1$ respectively).

In the first situation, let $V \subset \C^N$ be a faithful irreducible subrepresentation of $\rho$. By irreducibility, $\rho|_V$ can be written $\rho_1 \otimes \rho_2$ where $\rho_1 : \g_1 \rightarrow \gl(V_1)$ and $\rho_2 : \g_2 \rightarrow \gl(V_2)$ are faithful irreducible representations (see \cite{fulton}, p. 381). We claim that except when $n=m=3$, or $n=6$ and $m=3$, we always have $\dim V_1 \otimes V_2 \geq n+m$.

To see this, recall that if $p \geq 1$ and $p \neq 2$, the smallest dimension of a faithful irreducible representation of $\o(2p,\C)$ is $\min(2p,2^{p-1})$ and that if $p \geq 1$, the smallest dimension of a faithful irreducible representation of $\o(2p+1,\C)$ is $\min(2p+1,2^p)$). In both situations, the cases $2p$ or $2p+1$ correspond to the standard linear representation, and the cases $2^{p-1}$ or $2^p$ to the half-spin representation (even case) or spin representation (odd case), see \cite{fulton}, Propositions 19.22, 20.15, 20.20. Since $\o(4,\C) = \o(3,\C) \oplus \o(3,\C)$, a faithful irreducible representation $V$ of $\o(4,\C)$ is the tensor product of two irreducible representations of $\o(3,\C)$, hence $\dim V \geq 4$. Finally, if we note $d_n$ the smallest dimension of a faithful irreducible representation of $\o(n,\C)$, $n \geq 3$, we have
\begin{equation*}
d_n = 
\begin{cases}
2 \text{ if } n=3 \\
4 \text{ if } n \in \{4,5,6\} \\
n \text{ if } n \geq 7
\end{cases}
\end{equation*}
We see that $d_n d_m \geq n+m$ except when $n=m=3$ or $n=6$ and $m=3$. But in the latter situation, we prove directly that there does not exist an embedding $\o(6,\C) \oplus \o(3,\C) \hookrightarrow \o(8,\C)$. Indeed, these algebras have both rank $4$. Thus, we would deduce from this embedding that the root system of the first is included in the second, \textit{i.e.} that $A_3 \oplus A_1$ is included in $D_4$, but this is not true. This finishes the case where some irreducible subrepresentation $V$ is faithful.

Now assume that none of the $V^i$'s is faithful. Then, we can regroup the $V^i$'s together in such a way that $\C^N = V_1 \oplus V_2 \oplus E$ and $\rho = (\rho_1,\rho_2,0)$ where $\rho_1 : \g_1 \rightarrow \gl(V_1)$ and $\rho_2 : \g_2 \rightarrow \gl(V_2)$ are faithful representations. We claim that $\dim V_1 \geq n$ and $\dim V_2 \geq m$. Using the same notations as above, it is easy to observe that $d_k \geq k/2$ for all $k \geq 3$. Thus, we can assume that $V_1$ and $V_2$ are irreducible.

We use here that $\rho$ is an orthogonal representation of $\g_1 \oplus \g_2$ on $\C^N$. Let $Q$ be the non-degenerate quadratic form of $\C^N$ for which $\rho$ is skew-symmetric. We claim that $Q|_{V_i}$, $i \in \{1,2\}$, is non-degenerate, proving that $\rho_i$ is conjugated to a (faithful) representation of $\g_i$ in $\o(\dim V_i,\C)$ and finishing the proof.

Let $j \in \{1,2\}$ be the other index and $B$ the bilinear form associated to $Q$. Let $v \in V_i$ non-zero. The subspace $< \rho_i(X_1)\ldots \rho_i(X_k).v \ ; \ k \geq 1, \ X_1,\ldots,X_k \in \g_i > \subset V_i$ is a subrepresentation of $\rho_i$, and must be equal to $V_i$ by irreducibility. But any element of the form $\rho_i(X).v'$, $X \in \g_i$ and $v' \in V_i$, satisfies $\forall w \in V_j \oplus E, \ B(\rho_i(X).v',w) = -B(v',\rho_i(X)w) = 0$ since $\rho_i |_{V_j \oplus E} \equiv 0$. This proves that $V_i$ is orthogonal to $V_j \oplus E$ with respect to $Q$. Thus, if $Q|_{V_i}$ was degenerate, $Q$ would also be degenerate.
\end{proof}

\bibliographystyle{amsalpha}
\bibliography{references_article_semi_simples.bib}
\nocite{*}

\vspace*{6.5cm}

\hspace*{8cm}
\begin{minipage}[b]{0.55\linewidth}
Vincent \textsc{Pecastaing} \\
Laboratoire de Mathématique \\
Faculté des sciences d'Orsay \\
F-91405 Orsay Cedex \\
France \\
\\
\texttt{vincent.pecastaing@normalesup.org}
\end{minipage}

\end{document}